\title[Cunningham Chains]{Hopf--Galois structures on separable field extensions of degree related to Cunningham chains}
\author{Andrew Darlington}
\date{\today}
\newtheorem{theorem}{Theorem}[section]
\newtheorem{proposition}[theorem]{Proposition}
\newtheorem{lemma}[theorem]{Lemma}
\newtheorem{conjecture}[theorem]{Conjecture}
\newtheorem{corollary}[theorem]{Corollary}
\newtheorem{definition}[theorem]{Definition}
\theoremstyle{definition}
\newtheorem{remark}[theorem]{Remark}
\newtheorem{example}[theorem]{Example}
\newtheorem{examples}[theorem]{Examples}
\newcommand{\e}{\textbf{e}}
\newcommand{\bt}{\textbf{t}}
\newcommand{\Gal}{\mathrm{Gal}}
\newcommand{\Aut}{\mathrm{Aut}}
\newcommand{\Hol}{\mathrm{Hol}}
\setlist[enumerate]{itemsep=0mm}
\def\bign#1{\mathclose{\hbox{$\left#1\vbox to8.5\p@{}\right.\n@space$}}\mathopen{}}
\def\Bign#1{\mathclose{\hbox{$\left#1\vbox to11.5\p@{}\right.\n@space$}}\mathopen{}}
\begin{document}
\begin{abstract}
    The past few years have seen Hopf--Galois structures on extensions of squarefree degree studied in various contexts. The Galois case was fully explored by Alabdali and Byott in 2020, followed by a first attempt at generalising these results to include non-normal extensions by Byott and Martin-Lyons; their work looks at separable extensions of degree $pq$ with $p,q$ distinct odd primes, and $p=2q+1$. This paper extends the latter work further by considering separable extensions of squarefree degree $n=p_1...p_m$ where each pair of consecutive primes $p_i,p_{i+1}$ are related by $p_i=2p_{i+1}+1$.
\end{abstract}
\maketitle
\bibliographystyle{amsalpha}
	
\section{Introduction}
Hopf--Galois structures arose in a paper by Chase and Sweedler in 1969 \cite{CS69}. Building on the notion of a Galois extension of commutative rings in \cite{CHR65}, they stated what it meant for a commutative ring $S$ to be an $H$-Galois extension of a commutative ring $R$ for some $R$-Hopf algebra $H$. Almost twenty years later, Greither and Pareigis \cite{GP87} showed that, in the case that $L/K$ is a finite separable field extension, the problem of finding Hopf--Galois structures on $L/K$ can be tackled using a group-theoretic approach. One may view such a structure as essentially what arises when one replaces the Galois group of an extension with some $K$-Hopf algebra $H$, requiring it to fulfil a somewhat similar role - that is $H$ should act on $L/K$ in some `nice' way. 

Let $E$ be the Galois closure of $L/K$, with $G=\Gal(E/K)$, $G'=\Gal(E/L)$, and $X=G/G'$. Then the Greither-Pareigis approach gives a bijective correspondence between Hopf algebras giving a Hopf--Galois structure on $L/K$ and regular subgroups $N$ of $\text{Perm}(X)$ normalised by the image of the left-translation map $\lambda: G \rightarrow \text{Perm}(X)$ given by $\lambda(\sigma)(\overline{\tau})=\overline{\sigma\tau}$ (where the bar represents elements in the coset space $X$). 

The isomorphism class of $N$ is known as the \emph{type} of the Hopf--Galois structure. In particular, we have $H=E[N]^G$, where $G$ acts on $E$ and $N$ by automorphisms and by conjugation (via $\lambda$) respectively, and $H$ acts on $L/K$ as follows:
\[\left(\sum_{\eta \in N}a_{\eta}\eta\right)\cdot x = \sum_{\eta \in N}a_{\eta}(\eta^{-1}(1_X))(x).\]
\begin{examples}
    Let all notation be as above, but with $L/K$ now Galois.
    \begin{enumerate}
        \item Consider the right translation map $\rho: G \rightarrow \text{Perm}(G)$ given by $\rho(\sigma)(\tau)=\tau\sigma^{-1}$. Then $H=L[\rho(G)]^G=K[\rho(G)]$ gives the canonical classical Hopf--Galois structure on $L/K$.
        \item Now consider the left translation map $\lambda: G \rightarrow \text{Perm}(G)$. Then $H=L[\lambda(G)]^G$ gives the canonical non-classical Hopf--Galois structure on $L/K$.
    \end{enumerate}
    Note that if $G$ is abelian, then $\rho(G)=\lambda(G)$, and so $L[\lambda(G)]^G=L[\rho(G)]^G$.
\end{examples}
Aside from these well-known examples, it is possible for many other Hopf algebras to act on an extension in the way described above. It is even possible for the same Hopf algebra to act on $L/K$ in two or more ways such that the resulting Hopf--Galois structures are distinct, even in the case that $L/K$ is Galois. This then, for example, greatly enlarges the study of Galois module theory as each Hopf--Galois structure essentially offers a different lens through which to view the extension. Additionally, there exists a `Hopf--Galois correspondence' between the Hopf subalgebras of $H$ and the intermediate extensions of $L/K$. Unlike the classical Galois correspondence, although it is always injective, it is not so often surjective. The growing interest in Hopf--Galois theory has led to the classification of Hopf--Galois structures in many different cases; typically with restrictions on the degree of the field extension, as in \cite{CS20}, \cite{CS21} and \cite{Byo04} (the latter in the case of Galois extensions), or specifying Hopf--Galois structures of certain types, as in \cite{Kohl98} and \cite{AJ23}. In 2019, Byott and Alabdali \cite{AB20} gave a count and classification of the number of Hopf--Galois structures on Galois extensions of squarefree degree. In a first attempt at generalising this work to separable (but not necessarily normal) extensions, Byott and Martin-Lyons in \cite{BML21} looked at Hopf--Galois structures on extensions of degree $pq$, where $p=2q+1$ with $(p,q)$ a safe prime - Sophie Germain prime pair.

In a further generalisation, a previous work of the author's \cite{Dar24(a)} extended this to the situation where $p,q$ are arbitrary distinct odd primes. Initial further work suggested that the problem of finding Hopf--Galois structures on separable squarefree degree $n$ extensions becomes significantly more involved each time another prime is added to the factorisation of $n$, and so it is of interest to find special cases or trends which could give an insight into the more general case. In this paper, therefore, we explore the case where $n$ can be written as a product of distinct odd primes $p_1>\cdots>p_l$, such that each consecutive prime pair $(p_i,p_{i+1})$ is related by $p_i=2p_{i+1}+1$, thus the primes $p_2,\cdots,p_l$ are Sophie-Germain primes, and the primes $p_1,\cdots,p_{l-1}$ are safe primes. The $l$-tuple $(p_1,\cdots,p_l)$ is called a \emph{Cunningham chain} of length $l$. In 1904, Dickson formulated the following conjecture which extended Dirichlet's theorem on arithmetic progressions:
\begin{conjecture}[\cite{Dic04}, this formulation due to Ribenboim in Chapter 6 of \cite{Rib96}]\label{Dickson}
    Let $k\geq 1$ be an integer, $a_1,\cdots,a_k$ a sequence of integers and $b_1,\cdots,b_k$ be a sequence of integers with $b_i \geq 1$ for each $1 \leq i \leq k$. Further, suppose that there does not exist an integer $m>1$ such that $m$ divides the product
    \[\prod_{i=1}^k(a_i+nb_i)\]
    for all positive integers $n$. Then there are infinitely many positive integers $n$ such that the linear forms
    \[a_1+b_1n, \; a_2+b_2n, \; \cdots \;, a_k+b_kn\]
    are all prime.
\end{conjecture}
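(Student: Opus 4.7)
The plan is to first recognise that this statement is Dickson's conjecture, a notoriously open problem in analytic number theory that generalises Dirichlet's theorem on primes in arithmetic progressions; no complete proof is currently available, so I can only sketch the natural strategy and the obstruction it meets. The case $k=1$ reduces immediately to Dirichlet's theorem: the non-divisibility hypothesis forces $\gcd(a_1,b_1)=1$, and the classical $L$-function argument then delivers infinitely many primes of the form $a_1+b_1n$.

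For $k\geq 2$ the natural approach is sieve-theoretic. I would introduce the counting function
\[
\pi(X;\mathbf{a},\mathbf{b})=\#\{n\leq X:a_i+b_in\text{ is prime for all }1\leq i\leq k\}
\]
and aim for a lower bound of order $X/(\log X)^k$, matching the order predicted by the Bateman--Horn refinement of the Hardy--Littlewood conjectures. The leading constant there is a convergent Euler product $\mathfrak{S}(\mathbf{a},\mathbf{b})$ measuring the local densities $1-\omega(p)/p$ of the tuple modulo each prime $p$, and it is strictly positive precisely under the non-divisibility hypothesis stated. The technical plan would be to weight the characteristic function of $\{n\leq X\}$ by upper and lower bound sieves of Selberg or combinatorial type, of sieving level $X^\theta$, and push $\theta$ as close to $1$ as possible so that the sifted integers are forced to be prime.

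The main obstacle, and the reason no proof is known, is the parity problem identified by Selberg: classical sieves cannot distinguish integers with an odd number of prime factors from those with an even number, and hence yield no positive lower bound for the simultaneous primality of two or more independent linear forms. Even the smallest non-trivial instance $k=2$, $(a_1,b_1,a_2,b_2)=(0,1,2,1)$, is the twin prime conjecture; and the Sophie Germain case $(a_1,b_1,a_2,b_2)=(1,1,3,2)$, which is exactly the hypothesis most directly relevant to this paper, is likewise unresolved. The recent advances on bounded gaps between primes by Zhang, Maynard and the Polymath project yield partial results on admissible tuples but do not imply the infinitude of any single prescribed configuration. Consequently, Conjecture \ref{Dickson} will be invoked in what follows only as a working hypothesis, guaranteeing that Cunningham chains of arbitrary length exist in abundance, rather than as a theorem to be established here.
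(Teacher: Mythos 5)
The statement is Dickson's conjecture, which the paper itself presents only as a \emph{conjecture} with no proof offered (it is used solely to motivate the existence of long Cunningham chains); your proposal correctly recognises this, declines to prove it, and accurately identifies the parity obstruction that blocks all known sieve methods. Treating it as a working hypothesis is exactly the paper's stance, so there is nothing to correct.
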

One notes that the case $k=1$ corresponds to Dirichlet's Theorem. We also get the following two notable corollaries:
\begin{conjecture}[Twin Prime Conjecture]\label{twin_prime}
    There are infinitely many pairs of primes of the form $n,n+2$.
\end{conjecture}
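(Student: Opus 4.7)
The plan is to derive this as a direct specialisation of Conjecture \ref{Dickson} (Dickson's conjecture), since the excerpt explicitly frames the Twin Prime Conjecture as one of the ``notable corollaries'' of that statement. So I would not attempt an unconditional proof; rather, I would assume Conjecture \ref{Dickson} and extract the $n,n+2$ case.

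First I would set the parameters: take $k=2$, and choose the two linear forms
\[
a_1 + b_1 n = n, \qquad a_2 + b_2 n = n+2,
\]
i.e.\ $a_1=0$, $b_1=1$, $a_2=2$, $b_2=1$. With these choices the product appearing in the hypothesis of Conjecture \ref{Dickson} is
\[
\prod_{i=1}^{2}(a_i + n b_i) \;=\; n(n+2).
\]

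The main verification step is to check the non-divisibility hypothesis: that there is no integer $m>1$ which divides $n(n+2)$ for every positive integer $n$. Since any such $m$ would have a prime divisor, it suffices to rule out primes $p$. For $p=2$, take $n=1$: then $n(n+2)=3$, which is odd, so $2 \nmid n(n+2)$ for all $n$. For any odd prime $p$, choose $n=1$ again, giving $n(n+2)=3$; if $p\neq 3$ we are immediately done, while for $p=3$ we instead take $n=2$, giving $n(n+2)=8$, which is not divisible by $3$. Hence no prime (and thus no integer $m>1$) divides $n(n+2)$ for every positive integer $n$, and the hypothesis of Conjecture \ref{Dickson} is satisfied.

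Applying Conjecture \ref{Dickson} with these data then yields infinitely many positive integers $n$ such that both $n$ and $n+2$ are prime, which is exactly the statement of Conjecture \ref{twin_prime}. I do not expect a genuine obstacle here: the entire content is the unconditional verification of the coprimality-style hypothesis, which is a short case check. The only ``hard'' point, of course, is that the derivation is conditional on Conjecture \ref{Dickson}, which is itself unproven; no attempt at an unconditional proof is intended, in keeping with the role of Conjecture \ref{twin_prime} as a corollary in the sense of Ribenboim's formulation.
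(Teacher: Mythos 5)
Your proposal is correct and takes essentially the same route as the paper: the paper's proof of the corresponding proposition specialises Conjecture \ref{Dickson} with exactly the parameters $k=2$, $a_1=0$, $a_2=2$, $b_1=b_2=1$. You additionally spell out the verification that no $m>1$ divides $n(n+2)$ for all $n$, which the paper leaves implicit; that check is accurate.
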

\begin{conjecture}\label{cunningham}
    For any $l$, there are infinitely many Cunningham chains of length $l$.
\end{conjecture}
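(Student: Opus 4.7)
The plan is to derive Conjecture \ref{cunningham} from Dickson's conjecture (Conjecture \ref{Dickson}) in the same spirit as the deduction of Conjecture \ref{twin_prime} from the linear forms $n$ and $n+2$. Fix $l \geq 1$. Iterating the relation $p_i = 2p_{i+1} + 1$ upward from $p_l = n$ gives $p_{l-i} = 2^i n + (2^i - 1)$ for $0 \leq i \leq l-1$, so a Cunningham chain of length $l$ with smallest term $n$ exists precisely when the $l$ linear forms
\[ f_i(n) = 2^{i-1} n + (2^{i-1} - 1), \quad i = 1, \ldots, l, \]
are simultaneously prime. Applying Conjecture \ref{Dickson} to this family will, provided the admissibility hypothesis holds, produce infinitely many such $n$, and hence infinitely many Cunningham chains of length $l$.

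The only nontrivial step is the verification of admissibility: for every prime $p$, some residue class of $n$ modulo $p$ must satisfy $p \nmid \prod_i f_i(n)$. For $p = 2$, I would note that any odd $n$ works, since $f_1(n) = n$ is then odd and $f_i(n) = 2^{i-1} n + (2^{i-1} - 1)$ is automatically odd for $i \geq 2$. For an odd prime $p$, the element $2$ is invertible modulo $p$, and $f_i(n) \equiv 0 \pmod{p}$ rearranges to the single congruence $n \equiv 2^{1-i} - 1 \pmod{p}$; the set of such bad residues has cardinality at most $\min(l, \mathrm{ord}_p(2)) \leq p-1 < p$, so a good residue class always exists. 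This completes the reduction.

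I do not anticipate any serious obstacle internal to this argument; the admissibility check is entirely routine once the correct linear forms are written down. The substantive obstacle lies outside the argument itself, namely that Dickson's conjecture is still open, so what one actually produces is a conditional proof rather than an unconditional theorem. This matches the status of the statement as labelled: it is recorded in the excerpt as a conjecture, and the most that a direct approach along these lines can reasonably deliver is its equivalence (or at least its implication) from Conjecture \ref{Dickson}.
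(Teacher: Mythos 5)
Your proof is correct and follows the same route as the paper: specialise Dickson's conjecture to the $l$ linear forms describing a Cunningham chain with smallest term $n$. Two points of comparison are worth recording. First, your forms $f_i(n)=2^{i-1}n+(2^{i-1}-1)$, i.e.\ $a_i=2^{i-1}-1$ and $b_i=2^{i-1}$, are the correct ones; the paper as written takes $a_i=2^{i-1}$, $b_i=2^{i-1}-1$, which for $i=1$ gives $b_1=0$ (violating the hypothesis $b_i\geq 1$) and the constant form $1$, so the paper's assignment appears to have $a_i$ and $b_i$ transposed. Second, you verify the admissibility hypothesis (no prime $p$ divides $\prod_i f_i(n)$ for all $n$), which the paper omits entirely; your check is routine but correct, and it is a genuine part of the deduction since Dickson's conjecture says nothing without it.
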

\begin{proposition}
    Conjectures \ref{twin_prime} and \ref{cunningham} are both corollaries of Conjecture \ref{Dickson}.
\end{proposition}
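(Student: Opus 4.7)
The plan is to exhibit, for each conjecture, a specific choice of integer parameters $(a_i,b_i)$ to which Conjecture \ref{Dickson} applies, and to verify the coprimality hypothesis by a residue-class argument modulo each prime.

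For Conjecture \ref{twin_prime}, I would take $k=2$ with $(a_1,b_1,a_2,b_2)=(0,1,2,1)$, so the linear forms are $n$ and $n+2$. The product $n(n+2)$ equals $3$ at $n=1$ and $8$ at $n=2$, so no integer $m>1$ divides it for every positive $n$. Conjecture \ref{Dickson} then yields infinitely many $n$ for which both $n$ and $n+2$ are prime.

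For Conjecture \ref{cunningham}, fix the desired length $l$. A Cunningham chain $(p_1,\ldots,p_l)$ with $p_i=2p_{i+1}+1$ is determined by $p_l$: setting $n=p_l$ and unfolding the recurrence gives $p_{l-i} = 2^i n + (2^i - 1)$. This suggests taking $k=l$ and $(a_i,b_i)=(2^{i-1}-1,\,2^{i-1})$, so that the $i$th form is $2^{i-1}n + (2^{i-1}-1)$. Any $n$ for which all $l$ of these forms are simultaneously prime produces a Cunningham chain of length $l$, since the defining recurrence is built into the formula; taking $l$ fixed and letting $n$ vary then gives infinitely many such chains.

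The main obstacle is verifying Dickson's coprimality hypothesis for this second family, i.e.\ that no prime $p$ divides $\prod_{i=1}^{l}(2^{i-1}n+2^{i-1}-1)$ for all integers $n$. For $p=2$, each factor with $i\geq 2$ is odd while the $i=1$ factor is $n$, so any odd $n$ gives an odd product. For odd $p$, the $i$th factor vanishes modulo $p$ exactly when $n\equiv 2^{1-i}-1\pmod{p}$; as $i$ ranges over $\{1,\ldots,l\}$, these forbidden residues lie in $\{2^j-1:j\in\mathbb{Z}\}\subseteq\mathbb{F}_p$, a set of size $\operatorname{ord}_p(2)\leq p-1<p$. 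Thus some residue modulo $p$ is always available, contradicting divisibility for all $n$. Hence the coprimality hypothesis holds, Conjecture \ref{Dickson} applies, and Conjecture \ref{cunningham} follows.
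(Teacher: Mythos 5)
Your proof is correct and follows the same basic strategy as the paper: specialise Conjecture \ref{Dickson} to an explicit family of linear forms for each of the two conjectures. Two points of comparison are worth recording. First, your parameters for the Cunningham case, $(a_i,b_i)=(2^{i-1}-1,\,2^{i-1})$, are the right ones: the paper states the choice as $a_i=2^{i-1}$, $b_i=2^{i-1}-1$, which has the roles of the constant and the coefficient swapped --- with the paper's choice one gets $b_1=0$ (violating the hypothesis $b_i\geq 1$) and the first ``form'' is the constant $1$, and the remaining forms $2^{i-1}+(2^{i-1}-1)n$ do not satisfy the recurrence $p_i=2p_{i+1}+1$. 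Your version, with $i$th form $2^{i-1}n+(2^{i-1}-1)$, does satisfy $2\bigl(2^{i-1}n+2^{i-1}-1\bigr)+1=2^{i}n+2^{i}-1$, so simultaneous primality really does produce a Cunningham chain. Second, you verify the nontrivial hypothesis of Dickson's conjecture (no fixed prime divisor of the product), which the paper omits entirely; your residue count --- the forbidden classes modulo an odd prime $p$ lie among the at most $\mathrm{ord}_p(2)\leq p-1$ values $2^{j}-1$ in $\mathbb{F}_p$, so some class survives --- is exactly what is needed, and the parity check handles $p=2$. So your argument is both a correction of and a completion of the paper's proof.
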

\begin{proof}
    For the twin prime conjecture, we take $k=2$ with $a_1=0, a_2=2$, and $b_1=b_2=1$. For Conjecture \ref{cunningham}, we take $k=l$, $a_i=2^{i-1}$ and $b_i=2^{i-1}-1$.
\end{proof}
We note that taking $l=2$ gives the conjecture for Sophie-Germain primes. We now give some preliminary results and definitions.

\section{Preliminaries}
\begin{definition}
    A positive integer $n$ will be called a \emph{Cunningham product} if it can be expressed as a product of primes which form a Cunningham chain of some length.
\end{definition}
    Let $L/K$ be a separable field extension of degree $n$ with $E,G,G'$ and $X$ as above.
\begin{definition}
    For an abstract group $N$ of order $n$, define the holomorph, $\Hol(N)$, of $N$ as
    \[\Hol(N) = N \rtimes \Aut(N).\]
\end{definition}
For $\eta,\mu \in N$ and $\alpha,\beta \in \Aut(N)$, we have
\[[\eta,\alpha][\mu,\beta]=[\eta\alpha(\mu),\alpha\beta].\]
As is convention, we will write $\eta$ for $[\eta,\text{id}_N]$ and $\alpha$ for $[1_N,\alpha]$.
\begin{theorem}[Byott's Translation, \cite{Chi00}]
    There is a bijection between the following two sets of homomorphisms:
    \[\mathcal{N}:=\{\varphi:N \rightarrow \mathrm{Perm}(X) \mid \varphi(N) \text{ regular}\}\]
    and
    \[\mathcal{G}:=\{\vartheta:G\rightarrow \mathrm{Perm}(N) \mid \vartheta(G')=\mathrm{Stab}(1_N)\}.\]
    Furthermore, if $\varphi,\varphi' \in \mathcal{N}$ correspond to $\vartheta,\vartheta' \in \mathcal{G}$, then $\varphi(N)=\varphi'(N)$ iff $\vartheta(G)$ and $\vartheta'(G)$ are conjugate and element of $\Aut(N)$; and $\varphi(N)$ is normalised by $\lambda(G)$ iff $\vartheta(G)$ is contained in $\Hol(N)$.
\end{theorem}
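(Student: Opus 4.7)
The plan is to construct the bijection by transporting structure through an auxiliary set-bijection $a : N \to X$ attached to each $\varphi$, and then to recognize the two refinement conditions as instances of a single structural fact: the normalizer of the left-regular copy of $N$ in $\mathrm{Perm}(N)$ is $\mathrm{Hol}(N)$.

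For the forward direction, given $\varphi \in \mathcal{N}$, I would define $a : N \to X$ by $a(\eta) = \varphi(\eta)(\overline{1_G})$; regularity of $\varphi(N)$ makes this a bijection with $a(1_N) = \overline{1_G}$. Conjugation through $a$ gives a homomorphism $\vartheta : G \to \mathrm{Perm}(N)$ by $\vartheta(\sigma) = a^{-1} \circ \lambda(\sigma) \circ a$. Since $a^{-1}$ sends $\overline{1_G}$ to $1_N$, an element $\sigma \in G$ lies in the stabilizer of $1_N$ under $\vartheta(G)$ iff $\lambda(\sigma)$ fixes $\overline{1_G}$, iff $\sigma \in G'$; this gives $\vartheta(G') = \mathrm{Stab}(1_N)$. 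Conversely, given $\vartheta \in \mathcal{G}$, the stabilizer condition makes the assignment $\sigma G' \mapsto \vartheta(\sigma)(1_N)$ into a well-defined $G$-equivariant bijection $b : X \to N$, and I would define $\varphi(\eta) = b^{-1} \circ L_\eta \circ b$, where $L_\eta$ is left multiplication by $\eta$ on $N$. Regularity of left multiplication immediately yields regularity of $\varphi(N)$. A routine computation (using $b(\overline{1_G}) = 1_N$ and transitivity of $\lambda(G)$ on $X$) shows the two constructions are mutually inverse.

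For the refinement, the key observation is that replacing the identification $a$ by $a' = a \circ \alpha^{-1}$ for some $\alpha \in \mathrm{Perm}(N)$ leaves $\varphi(N)$ unchanged exactly when $\alpha$ normalizes the left-regular image of $N$ inside $\mathrm{Perm}(N)$ and sends $1_N$ to $1_N$; by the classical identification of this normalizer as $\mathrm{Hol}(N) = N \rtimes \mathrm{Aut}(N)$, these two requirements together pin $\alpha$ down to an element of $\mathrm{Aut}(N)$. Transporting this ambiguity through the correspondence yields precisely $\vartheta'(G) = \alpha \vartheta(G) \alpha^{-1}$ for some $\alpha \in \mathrm{Aut}(N)$. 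Similarly, since conjugation by $a$ is a group isomorphism $\mathrm{Perm}(X) \to \mathrm{Perm}(N)$ carrying $\varphi(N)$ to the left-regular image of $N$ and $\lambda(G)$ to $\vartheta(G)$, the condition that $\lambda(G)$ normalize $\varphi(N)$ is equivalent to $\vartheta(G)$ normalizing the left-regular image, hence to $\vartheta(G) \subseteq \mathrm{Hol}(N)$.

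The main obstacle will be the clean invocation of the normalizer identity $N_{\mathrm{Perm}(N)}(N_{\mathrm{left}}) = \mathrm{Hol}(N)$: both refinement claims collapse onto this one fact once the transport of structure through $a$ is in place, so it is worth isolating and proving (or citing) carefully. Everything else in the argument is essentially bookkeeping about base points and intertwining relations between the $G$-actions on $X$ and on $N$.
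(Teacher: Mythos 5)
The paper offers no proof of this theorem --- it is quoted directly from Childs \cite{Chi00} --- so there is nothing internal to compare against; your argument is precisely the standard proof of Byott's translation (transport of structure through the base-point bijection $a(\eta)=\varphi(\eta)(\overline{1_G})$, together with the identification of the normaliser of the left-regular image of $N$ in $\mathrm{Perm}(N)$ with $\Hol(N)$, whose point-stabiliser at $1_N$ is $\Aut(N)$), and it is essentially correct. The one step that deserves explicit care is your claim that the stabiliser condition makes $b(\sigma G')=\vartheta(\sigma)(1_N)$ a bijection: as literally stated, $\vartheta(G')=\mathrm{Stab}_{\vartheta(G)}(1_N)$ does not force injectivity of $b$ --- one needs $\ker\vartheta\subseteq G'$, i.e.\ $G'=\vartheta^{-1}(\mathrm{Stab}(1_N))$, since for example the trivial homomorphism satisfies the stabiliser condition while $b$ is constant. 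This is an imprecision inherited from the statement as transcribed (standard formulations of the lemma require $\vartheta$ to be injective, just as regularity automatically forces $\varphi$ to be); once that hypothesis is made explicit, your two constructions are mutually inverse as claimed, and both refinement statements do collapse, as you say, onto the single normaliser identity.
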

This leads to the following counting formula:
\begin{lemma}[\cite{Byo96}]\label{Byott_num_HGS}
    Let $G,G',N$ be as above, let $e(G,N)$ be the number of Hopf--Galois structures of type $N$ which realise $G$, and $e'(G,N)$ the number of subgroups $M$ of $\emph{Hol}(N)$ which are transitive on $N$ and isomorphic to $G$ via an isomorphism taking the stabiliser $M'$ of $1_N$ in $M$ to $G'$. Then
    \[e(G,N)=\frac{|\emph{Aut}(G,G')|}{|\emph{Aut}(N)|}e'(G,N),\]
    where
    \[\Aut(G,G')=\left\{\theta\in\Aut(G) \mid \theta(G')=G'\right\},\]
    the group of automorphisms $\theta$ of $G$ such that $\theta$ fixes the identity coset $1_GG'$ of $X=G/G'$.
\end{lemma}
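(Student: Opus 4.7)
My plan is to combine Byott's Translation Theorem with a two-way count. Define
\[
\mathcal{N}^{G} = \{\varphi \in \mathcal{N} : \varphi(N) \text{ is normalised by } \lambda(G)\},\quad
\mathcal{G}^{\Hol} = \{\vartheta \in \mathcal{G} : \vartheta(G) \subseteq \Hol(N)\}.
\]
By the theorem, the stated bijection restricts to a bijection $\mathcal{N}^{G} \leftrightarrow \mathcal{G}^{\Hol}$, so $|\mathcal{N}^{G}| = |\mathcal{G}^{\Hol}|$. The strategy is then to evaluate each side separately and equate.

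For the left-hand side, I would first note that every $\varphi \in \mathcal{N}$ is injective, since regularity of $\varphi(N)$ on $X$ forces any kernel element to act trivially. Two elements of $\mathcal{N}^{G}$ have equal image as subgroups of $\mathrm{Perm}(X)$ precisely when they differ by pre-composition with an element of $\Aut(N)$. Since a Hopf--Galois structure of type $N$ realising $G$ is determined by the subgroup $\varphi(N) \leq \mathrm{Perm}(X)$, this fibre count yields $|\mathcal{N}^{G}| = |\Aut(N)| \cdot e(G,N)$.

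For the right-hand side, each $\vartheta \in \mathcal{G}^{\Hol}$ has image $M = \vartheta(G) \leq \Hol(N)$, and the condition $\vartheta(G') = \mathrm{Stab}(1_N)$ together with the transitivity of $M$ on $N$ forces $\vartheta$ to be an isomorphism $G \to M$ sending $G'$ onto $M' := \mathrm{Stab}_{M}(1_N)$; conversely, every $M$ counted by $e'(G,N)$ arises in this way. For fixed such $M$, the $\vartheta$ with image $M$ are parametrised by isomorphisms $G \to M$ carrying $G'$ to $M'$, and any two such differ by pre-composition with an element of $\Aut(G,G')$. Hence $|\mathcal{G}^{\Hol}| = |\Aut(G,G')| \cdot e'(G,N)$.

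Equating the two expressions gives $|\Aut(N)| \cdot e(G,N) = |\Aut(G,G')| \cdot e'(G,N)$, which rearranges to the claimed formula. The main obstacle I foresee is carefully unpacking the condition $\vartheta(G') = \mathrm{Stab}(1_N)$ to confirm that every $\vartheta \in \mathcal{G}^{\Hol}$ must in fact be injective onto a subgroup with stabiliser exactly $M'$; once this is established, the fibre sizes on both sides are routine parametrisation counts.
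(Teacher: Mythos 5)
The paper does not actually prove this lemma (it is quoted from \cite{Byo96}), but your double count of the bijection in Byott's Translation is exactly the standard argument from that source: both fibre counts, $|\mathcal{N}^{G}| = |\Aut(N)|\, e(G,N)$ and $|\mathcal{G}^{\Hol}| = |\Aut(G,G')|\, e'(G,N)$, are correct, and equating them gives the formula. The injectivity point you flag is resolved by observing that $\ker\vartheta$ is a normal subgroup of $G$ contained in $G'$ (since $[\vartheta(G):\vartheta(G')]=|N|=[G:G']$ forces $\ker\vartheta\subseteq G'$), and $G'$ is core-free in $G$ because $E$ is the Galois closure of $L/K$; with that observation your proof is complete.
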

In this paper, therefore, in order to find and count Hopf--Galois structures on separable extensions whose degree $n$ is a Cunningham product, we compute the transitive subgroups of $\Hol(N)$ for each group $N$ of order $n$. The following proposition aids the counting of these:
\begin{lemma}
    Let $l$ be a positive integer and $1 \leq k \leq l$. Then there are
    \begin{equation}\label{k_c_2_subgroups}
        \prod_{i=1}^k\frac{2^{l-(i-1)}-1}{2^i-1}
    \end{equation}
    subgroups of $(C_2)^l:=\underset{l -\text{times}}{C_2 \times \cdots \times C_2}$ of order $2^k$.
\end{lemma}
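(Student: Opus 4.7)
The plan is to identify $(C_2)^l$ with the $l$-dimensional vector space $\mathbb{F}_2^l$, under which subgroups of order $2^k$ correspond exactly to $k$-dimensional $\mathbb{F}_2$-subspaces. So the quantity to count is the Gaussian binomial $\binom{l}{k}_2$, and the formula \eqref{k_c_2_subgroups} is the standard $q=2$ specialization.

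I would carry out the count in the usual ``flag'' fashion. First I would count ordered linearly independent $k$-tuples $(v_1,\dots,v_k)$ in $\mathbb{F}_2^l$: the vector $v_1$ may be any of the $2^l-1$ nonzero vectors, and having chosen $v_1,\dots,v_{i-1}$ spanning a subspace of size $2^{i-1}$, the vector $v_i$ may be any of the $2^l-2^{i-1}$ vectors lying outside this span. This gives
\[\prod_{i=1}^{k}\bigl(2^l-2^{i-1}\bigr)\]
ordered bases. The same counting applied to $\mathbb{F}_2^k$ in place of $\mathbb{F}_2^l$ shows that each fixed $k$-dimensional subspace has $\prod_{i=1}^{k}(2^k-2^{i-1})=|GL_k(\mathbb{F}_2)|$ ordered bases. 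Dividing gives
\[\prod_{i=1}^{k}\frac{2^l-2^{i-1}}{2^k-2^{i-1}}.\]

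Finally I would extract the common factor $2^{i-1}$ from numerator and denominator of each factor, using $2^l-2^{i-1}=2^{i-1}(2^{l-(i-1)}-1)$ and $2^k-2^{i-1}=2^{i-1}(2^{k-(i-1)}-1)$, so the powers of two cancel and one obtains
\[\prod_{i=1}^{k}\frac{2^{l-(i-1)}-1}{2^{k-(i-1)}-1}.\]
Reindexing the denominator by $j=k-(i-1)$ so that $j$ runs over $1,\dots,k$ in reverse converts $\prod_{i=1}^k(2^{k-(i-1)}-1)$ into $\prod_{j=1}^k(2^j-1)$, matching the denominator in \eqref{k_c_2_subgroups}. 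There is no real obstacle here; the only thing to be a little careful about is the reindexing step, since the numerator and denominator products must be verified to run over compatible ranges before the cancellation of the $2^{i-1}$ factors is legitimate.
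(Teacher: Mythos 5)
Your proposal is correct and follows essentially the same route as the paper: identify $(C_2)^l$ with $\mathbb{F}_2^l$, count ordered linearly independent $k$-tuples, divide by the number of ordered bases of a $k$-dimensional space, and simplify the resulting quotient to the stated form. Your explicit reindexing of the denominator product is a small point of extra care that the paper glosses over, but the argument is the same.
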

\begin{proof}
    We note that $(C_2)^l$ can be identified with the vector space $\mathbb{F}_2^l$, and that a subgroup of $(C_2)^l$ of rank $k$ corresponds to a $k$-dimensional subspace of $\mathbb{F}_2^l$.

    Now, a $k$-dimensional subspace of $\mathbb{F}_2^l$ is generated by $k$-tuples $(v_1,\ldots,v_k)$ of linearly independent vectors. We claim that there are $\prod_{i=1}^k(2^l-2^{k-1})$ such tuples in $\mathbb{F}_2^l$. To see this, note first that we have a choice of $2^l-1$ vectors for $v_1$. Given a choice $(v_1,\ldots,v_{j-1})$ of linearly independent vectors, there are $2^l-2^{j-1}$ choices for $v_j$ such that the set $\{v_1,\ldots,v_j\}$ is linearly independent, as clearly $|\mathrm{Span}(v_1,\ldots,v_{j-1})|=2^{j-1}$. The total number of $k$-tuples $(v_1,\ldots,v_k)$ of linearly independent vectors is therefore $(2^l-1)(2^l-2)\cdots(2^l-2^{k-1})=\prod_{i=1}^k(2^l-2^{i-1})$.

    We now need to take into account of the fact that several different $k$-tuples of linearly independent vectors will generate the same subspace. To this end, given such a subspace $V$, we note that this is precisely the same as asking how many different bases $V$ has. As $V$ is a $k$-dimensional vector space, this is the same as asking how many $k$-tuples of linearly independent vectors there are in $\mathbb{F}_2^k$. By the above argument, there are $\prod_{i=1}^k(2^k-2^{i-1})$ such $k$-tuples.

    The total number of $k$-dimensional subspaces of $\mathbb{F}_2^l$, and hence the number of subgroups of $(C_2)^l$ of order $2^k$, is therefore
    \[\frac{\prod_{i=1}^k(2^l-2^{i-1})}{\prod_{i=1}^k(2^k-2^{i-1})}=\prod_{i=1}^k\frac{2^l-2^{i-1}}{2^k-2^{i-1}}=\prod_{i=1}^k\frac{2^{l-(i-1)}-1}{2^i-1}.\]
\end{proof}
To obtain the total number of subgroups of $(C_2)^l$, we therefore must sum (\ref{k_c_2_subgroups}) over each order $2^k$ for $1 \leq k \leq l$, and add 1, which represents the trivial subgroup. For conciseness, we will extend the sum and take the value of the empty product to be 1. That is, for example, $\prod_{i=1}^0f(i):=1$ where $f(i)$ is any summand.
\begin{corollary}\label{c_2_subgroups}
    The number of subgroups of $(C_2)^l$ is
    \[\sum_{k=0}^l\prod_{i=1}^k\frac{2^{l-(i-1)}-1}{2^i-1}.\]
\end{corollary}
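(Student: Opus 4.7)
The plan is to reduce the statement directly to the preceding lemma by partitioning the subgroups of $(C_2)^l$ according to their order. Since $(C_2)^l$ is a finite $2$-group of order $2^l$, Lagrange's theorem tells us that every subgroup has order $2^k$ for some $0 \leq k \leq l$, so the total count splits as a sum over $k$ of the number of subgroups of each order.

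First I would invoke the previous lemma to write the number of subgroups of order $2^k$ as $\prod_{i=1}^{k}\frac{2^{l-(i-1)}-1}{2^i-1}$ for each $1 \leq k \leq l$. Next I would deal with the boundary case $k=0$: the only subgroup of order $1$ is the trivial one, so its count is $1$, which, under the stated convention that the empty product equals $1$, is precisely the value of $\prod_{i=1}^{0}\frac{2^{l-(i-1)}-1}{2^i-1}$. This convention is exactly what allows the $k=0$ term to be absorbed seamlessly into the sum, so that the $+1$ for the trivial subgroup need not appear separately.

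Finally I would sum the contributions from $k=0$ up through $k=l$ to obtain
\[
\sum_{k=0}^{l}\prod_{i=1}^{k}\frac{2^{l-(i-1)}-1}{2^{i}-1},
\]
which is the claimed formula. There is no real obstacle here; the only care needed is checking the bookkeeping at $k=0$ and $k=l$ (in the latter case the product collapses to $1$, correctly counting the single subgroup equal to the whole group). The substantive content has already been carried out in the proof of the preceding lemma.
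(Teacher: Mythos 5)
Your proposal is correct and matches the paper's own reasoning exactly: the paper likewise sums the lemma's count over $1 \leq k \leq l$, adds $1$ for the trivial subgroup, and absorbs that $+1$ into the $k=0$ term via the stated empty-product convention. Your additional sanity check that the $k=l$ product telescopes to $1$ is a nice touch but not a departure from the paper's route.
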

We now state two useful propositions from \cite{BML21}.
\begin{proposition}[Proposition 2.2 of \cite{BML21}]\label{ab-aut}
    Let $N$ be an abelian group such that $\Aut(N)$ is also abelian, and let $A$, $A'$ be subgroups of $\Aut(N)$. Consider the subgroups $M=N \rtimes A$ and $M'=N \rtimes A'$ of $\Hol(N)$. If there is an isomorphism $\phi: M \to M'$ with $\phi(N)=N$, then $M=M'$.
\end{proposition}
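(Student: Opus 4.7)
The plan is to exploit the hypothesis $\phi(N)=N$ to compare how conjugation by elements of $A$ and $A'$ acts on $N$, and then use the abelianness of $\Aut(N)$ to force equality of the two subgroups.

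First I would set $\gamma:=\phi|_N$, which is an automorphism of $N$. For each $\alpha\in A$, viewed as $[1_N,\alpha]\in M$, the image $\phi(\alpha)$ lies in $M'=N\rtimes A'$, so I can write $\phi(\alpha)=[\eta_\alpha,\alpha']$ for some $\eta_\alpha\in N$ and some $\alpha'\in A'$. The goal is to show that $\alpha'=\alpha$, because then $\phi(\alpha)\in M'$ forces $\alpha\in A'$, giving $A\subseteq A'$; the reverse inclusion follows by running the same argument with $\phi^{-1}:M'\to M$.

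Next I would compute two expressions for $\phi(\alpha\eta\alpha^{-1})$ where $\eta\in N$. On one hand, the identity $\alpha\eta\alpha^{-1}=\alpha(\eta)$ holds inside $\Hol(N)$ (this follows from the multiplication rule $[\eta,\alpha][\mu,\beta]=[\eta\alpha(\mu),\alpha\beta]$ stated in the paper), so $\phi(\alpha\eta\alpha^{-1})=\gamma(\alpha(\eta))$. On the other hand, a direct computation in $M'$, using that $N$ is abelian so that the $\eta_\alpha$-part cancels in the conjugation $[\eta_\alpha,\alpha']\,\mu\,[\eta_\alpha,\alpha']^{-1}=\alpha'(\mu)$, yields $\phi(\alpha)\phi(\eta)\phi(\alpha)^{-1}=\alpha'(\gamma(\eta))$. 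Equating these for all $\eta\in N$ gives the relation $\gamma\alpha=\alpha'\gamma$ in $\Aut(N)$, i.e.\ $\alpha'=\gamma\alpha\gamma^{-1}$.

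Finally, the abelianness of $\Aut(N)$ collapses this conjugation to the identity, so $\alpha'=\alpha$. Hence $\alpha\in A'$, and by symmetry $A=A'$, whence $M=M'$. The only slightly subtle step is the cancellation in the conjugation computation inside $M'$, which requires $N$ to be abelian; everything else is bookkeeping in the semidirect product. There is no real obstacle here: the two hypotheses (abelian $N$ and abelian $\Aut(N)$) are each used exactly once, which is reassuring.
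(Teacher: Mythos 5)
Your proof is correct and follows essentially the same route as the argument the paper itself adapts from Proposition 2.2 of \cite{BML21} (see the proof of Proposition \ref{cyclic_isoms_cunningham}): restrict $\phi$ to an automorphism of $N$, compare the two computations of $\phi(\alpha\eta\alpha^{-1})$ to obtain $\alpha'=\gamma\alpha\gamma^{-1}$ in $\Aut(N)$, and let the abelianness of $\Aut(N)$ collapse the conjugation, with $N$ abelian used exactly where you say to cancel the $\eta_\alpha$ term.
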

\begin{proposition}[Proposition 2.3 of \cite{BML21}]\label{rel-aut-char}
    Let $N$ be any group and $A$ be a subgroup of $\Aut(N)$. Let $M$ be the
    subgroup $N \rtimes A$ of $Hol(N)$, and suppose that $N$ is a characteristic subgroup of $M$. Then the group
    \[\Aut(M,A) := \{ \theta \in \Aut(M) : \theta(A)=A\}\] 
    is isomorphic to the normaliser of $A$ in $\Aut(N)$. In particular, if $\Aut(N)$ is abelian then $\Aut(M,A) \cong \Aut(N)$.
\end{proposition}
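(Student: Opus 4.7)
The plan is to build the isomorphism as a restriction map. Since $N$ is characteristic in $M$, any $\theta \in \Aut(M)$ restricts to an automorphism $\theta|_N \in \Aut(N)$, and this restriction defines a group homomorphism $r: \Aut(M) \to \Aut(N)$. I would first establish that $r$ sends $\Aut(M,A)$ into $N_{\Aut(N)}(A)$. To do this, take $\theta \in \Aut(M,A)$, write $\psi = \theta|_N$, and for any $\alpha \in A$ note that the relation $\alpha \eta \alpha^{-1} = \alpha(\eta)$ in $M$ (for $\eta \in N$) together with $\theta(\alpha) \in A$ yields
\[\psi(\alpha(\eta)) = \theta(\alpha)(\psi(\eta))\]
for all $\eta \in N$, which rearranges to $\theta(\alpha) = \psi \alpha \psi^{-1}$. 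Since $\theta(\alpha) \in A$ for every $\alpha \in A$, the element $\psi$ normalises $A$ in $\Aut(N)$.

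Next I would show injectivity of the restricted map $r: \Aut(M,A) \to N_{\Aut(N)}(A)$. If $\psi = \theta|_N$ is the identity, then the displayed formula above forces $\theta(\alpha) = \alpha$ for each $\alpha \in A$, and since $M$ is generated as a set by $N$ and $A$ (indeed every element of $M$ has the form $[\eta,\alpha] = \eta \cdot \alpha$), $\theta$ is the identity of $M$. For surjectivity, given $\psi \in N_{\Aut(N)}(A)$, define
\[\theta_\psi([\eta,\alpha]) := [\psi(\eta),\, \psi \alpha \psi^{-1}].\]
This lies in $M$ since $\psi$ normalises $A$, it sends $A$ to $A$, and a short direct check using the multiplication rule $[\eta,\alpha][\mu,\beta] = [\eta\alpha(\mu),\alpha\beta]$ confirms it is a homomorphism: both sides of $\theta_\psi([\eta,\alpha][\mu,\beta])$ expand to $[\psi(\eta)\psi(\alpha(\mu)),\, \psi\alpha\beta\psi^{-1}]$. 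Bijectivity follows because $\theta_\psi$ restricts to the bijections $\psi$ on $N$ and conjugation-by-$\psi$ on $A$. Clearly $r(\theta_\psi) = \psi$, so $r$ is surjective.

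The main subtlety, and the only place the hypotheses really enter, is verifying that the map $\theta_\psi$ is genuinely an automorphism of $M$ rather than merely a set-map respecting the decomposition; this is exactly the computation using the semidirect product multiplication above, and the normaliser condition is what guarantees the second coordinate remains inside $A$. Having established $\Aut(M,A) \cong N_{\Aut(N)}(A)$, the final clause is immediate: if $\Aut(N)$ is abelian, every subgroup is normal, so $N_{\Aut(N)}(A) = \Aut(N)$ and $\Aut(M,A) \cong \Aut(N)$.
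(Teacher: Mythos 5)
Your proof is correct and complete; the paper itself states this result without proof (it is quoted from \cite{BML21}), and your argument via the restriction map $\theta\mapsto\theta|_N$ — using that $N$ is characteristic to land in $\Aut(N)$, the relation $\theta(\alpha)=\psi\alpha\psi^{-1}$ to land in the normaliser and to get injectivity, and the explicit $\theta_\psi([\eta,\alpha])=[\psi(\eta),\psi\alpha\psi^{-1}]$ for surjectivity — is exactly the standard argument given in the cited source. Nothing to add.
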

We finally recall the definition of Hall $\pi$-subgroups, as well as a related theorem. Let $G$ be a group with order divisible by $n$, we will denote by $\pi(n)$ the set of primes dividing $n$, and by $\pi'(n)$ the set of primes dividing $|G|$ which do not divide $n$. If $\pi$ is any set of primes dividing $|G|$, we recall that a \textit{$\pi$-subgroup} of $G$ is any subgroup of $G$, whose order is a product of primes in $\pi$ and a \textit{Hall $\pi$-subgroup} of $G$ is a $\pi$-subgroup of $G$ whose index is not divisible by any of the primes in $\pi$. We also recall Hall's Theorem below.
\begin{theorem}[Hall, 1928. See 9.1.7 of \cite{Rob96}]
    Let $G$ be a finite solvable group and $\pi$ any set of primes dividing $|G|$. Then every $\pi$-subgroup of $G$ is contained in a Hall $\pi$-subgroup of $G$. Further, any two Hall $\pi$-subgroups of $G$ are conjugate.
\end{theorem}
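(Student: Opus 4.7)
The plan is to prove all three conclusions at once --- existence of a Hall $\pi$-subgroup, conjugacy of any two, and containment of every $\pi$-subgroup in some Hall $\pi$-subgroup --- by a single induction on $|G|$. The case $|G|=1$ is trivial, so I would pass to a group of order greater than $1$ and pick a minimal normal subgroup $N \trianglelefteq G$. Solvability forces $N$ to be elementary abelian of exponent $p$ for some prime $p \mid |G|$, and I would split the inductive step according to whether $p \in \pi$ or $p \notin \pi$.

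When $p \in \pi$ the argument is bookkeeping: by induction $G/N$ has a Hall $\pi$-subgroup $\overline{H}$, and its preimage $H \leq G$ has order $|N|\cdot|\overline{H}|$, still a $\pi$-number, while $[G:H] = [G/N:\overline{H}]$ remains a $\pi'$-number. For the containment clause I would push a given $\pi$-subgroup $P \leq G$ down to $G/N$ and pull back a Hall $\pi$-subgroup of $G/N$ containing its image; for conjugacy I would conjugate in the quotient and lift a coset representative.

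When $p \notin \pi$, pulling back a Hall $\pi$-subgroup $\overline{H}$ of $G/N$ produces a subgroup $H$ of $G$ containing $N$. If $H < G$ then $|H| < |G|$, and induction inside $H$ gives a Hall $\pi$-subgroup $K$ of $H$ whose index in $G$ is the product of the two $\pi'$-numbers $[G:H]$ and $[H:K]$, so $K$ is Hall $\pi$ in $G$. The delicate situation is $H = G$, i.e.\ when $G/N$ is itself a $\pi$-group; here a Hall $\pi$-subgroup is exactly a complement of $N$, and both existence and conjugacy of complements are supplied by the Schur--Zassenhaus theorem, which applies because $|N|$ and $|G/N|$ are coprime. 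For containment in this situation, given a $\pi$-subgroup $P$ one examines $PN$: this has abelian normal subgroup $N$ of order coprime to $|P|$, and $P$ is a complement of $N$ in $PN$, so the conjugacy half of Schur--Zassenhaus forces $P$ to be conjugate in $G$ to a subgroup of any fixed Hall $\pi$-subgroup.

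The main obstacle is precisely this last sub-case: it reduces the problem to existence and conjugacy of complements to an abelian normal Hall subgroup, which is the solvable half of Schur--Zassenhaus and itself requires a separate induction (classically run via a $1$-cocycle or counting argument that exploits the abelian kernel). Everything else in the proof is a clean application of the correspondence theorem together with the induction hypothesis, so essentially all the non-trivial work is concentrated in handling complements in this coprime-order setting.
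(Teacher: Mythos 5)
The paper does not actually prove this statement---it is quoted directly from \cite{Rob96}, 9.1.7---and your outline is precisely the standard argument given there: induction on $|G|$ via a minimal normal subgroup $N$ (elementary abelian of exponent $p$ by solvability), a case split on whether $p\in\pi$, and the abelian case of Schur--Zassenhaus supplying existence and conjugacy of complements to $N$ in the critical sub-case where $G/N$ is a $\pi$-group. The plan is correct; the only steps left implicit (conjugacy and containment when $p\notin\pi$ but the pullback $H$ is a proper subgroup) reduce routinely to the induction hypothesis applied first in $G/N$ and then inside a proper subgroup of $G$, exactly parallel to what you spell out for existence.
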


\section{Extensions of degree \texorpdfstring{$n$}{n} for \texorpdfstring{$n$}{n} a Cunningham product}
Let $n=p_1\cdots p_l$ be a Cunningham product. For each $1\leq i \leq l-1$, we have $p_i-1=2p_{i+1}$. Let $p_l-1=2^xs$ for some odd $s$. Further, let $F(l)$ denote the $l$\textsuperscript{th} Fibonacci number; we will use the convention that $F(2)=2$.
\begin{proposition}\label{fibbonacci}
    Let $n$ be as above. There are $F(l)$ groups of order $n$. These groups consist of a chain of ordered cyclic factors $..., C_{p_{i-1}}, C_{p_i}, C_{p_{i+1}},..$ such that each pair of consecutive factors is joined together via either a direct or semidirect product. Within each set of three consecutive factors, there is at most one semidirect product.
\end{proposition}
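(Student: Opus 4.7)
The plan is to proceed by strong induction on $l$, with base cases $l=1$ (only $C_{p_1}$, so $F(1)=1$) and $l=2$ (only $C_{p_1 p_2}$ and the nontrivial semidirect product $C_{p_1}\rtimes C_{p_2}$, so $F(2)=2$). For the inductive step, let $G$ have order $n=p_1\cdots p_l$. Since $G$ has squarefree order it is solvable, and because $p_1$ is the largest prime dividing $|G|$ the Sylow $p_1$-subgroup $P_1\cong C_{p_1}$ is normal in $G$. Schur--Zassenhaus then produces a complement $H$ of order $p_2\cdots p_l$, so $G=P_1\rtimes_\phi H$, and by the inductive hypothesis $H$ is one of $F(l-1)$ groups admitting the stated chain description on the primes $p_2,\ldots,p_l$.

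Next, I would analyse $\phi:H\to\Aut(P_1)\cong C_{p_1-1}=C_{2p_2}$. Since $|H|=p_2\cdots p_l$ is odd and no $p_j$ with $j\geq 3$ divides $2p_2$, one has $\gcd(|H|,2p_2)=p_2$, so $|\phi(H)|\in\{1,p_2\}$. When $\phi$ is trivial, $G=P_1\times H$; this yields $F(l-1)$ pairwise non-isomorphic groups (since $H\cong G/P_1$ is recovered), each with the $(p_1,p_2)$-edge recorded as a direct product and the rest of the chain inherited from $H$.

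The main step is to characterise when $\phi$ can be nontrivial. Since $\Aut(P_1)$ is cyclic, $\phi$ factors through $H^{\mathrm{ab}}$, so nontriviality is equivalent to the Sylow subgroup $P_2\cong C_{p_2}$ surviving in $H^{\mathrm{ab}}$. If the $(p_2,p_3)$-edge of $H$ is semidirect then $P_3$ acts nontrivially on $P_2$ by conjugation, forcing $[P_3,P_2]=P_2\subseteq[H,H]$ and killing $P_2$ in $H^{\mathrm{ab}}$. Conversely, if $(p_2,p_3)$ is direct, I would apply the same conjugation analysis to $H\to\Aut(P_2)\cong C_{2p_3}$: the image has order dividing $\gcd(|H|,2p_3)=p_3$, and any nontrivial action would have to be realised by $P_3$, since all other Sylows of $H$ have order coprime to $2p_3$; excluding this forces $P_2$ to be central in $H$, so $H=P_2\times H''$ with $|H''|=p_3\cdots p_l$. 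Hence nontrivial $\phi$ exists precisely when $(p_2,p_3)$ is a direct product in $H$; the number of such $H$ equals the number of groups of order $p_3\cdots p_l$, namely $F(l-2)$ by induction. Moreover the $p_2-1$ nontrivial choices of $\phi$ all yield the same group up to isomorphism (they differ by automorphisms of $C_{p_1}$ and $C_{p_2}$), and the resulting $G=(P_1\rtimes C_{p_2})\times H''$ has $(p_1,p_2)$ semidirect, $(p_2,p_3)$ forced direct (respecting the three-consecutive-primes constraint), and the remaining edges inherited from $H''$.

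The two cases are disjoint since $P_1$ is central exactly in Case~1, and within Case~2 the factor $H''$ is recovered as $C_G(P_1)/P_1$, confirming injectivity of the parametrisation. Summing gives $F(l-1)+F(l-2)=F(l)$, completing the induction and verifying the structural description. The main obstacle is the equivalence in paragraph three, where the combinatorial constraint ``within each set of three consecutive factors, at most one semidirect product'' must be matched precisely against the group-theoretic condition that $P_2$ survive in $H^{\mathrm{ab}}$; this is the step in which the Fibonacci recurrence emerges organically from the interplay between the chain structure of $H$ and the action on $P_1$.
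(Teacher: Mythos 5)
Your argument is correct, but it takes a genuinely different route from the paper's. The paper invokes the classification of groups of squarefree order (via \cite{MM84}): every such group is metacyclic, $C_e \rtimes C_d$ with $C_d$ acting faithfully on $C_e$, and reads the chain structure directly off the divisibility constraint $d \mid \varphi(e)$ (so $p_i \mid d$ forces $p_{i-1} \mid e$, and $p_i$, $p_{i+1}$ cannot both divide $d$); the Fibonacci count is then a purely combinatorial induction obtained by peeling off the \emph{smallest} prime and deciding whether the last edge is direct or semidirect. You instead peel off the \emph{largest} prime: normality of the Sylow $p_1$-subgroup plus Schur--Zassenhaus gives $G = P_1 \rtimes_\phi H$, and the recurrence $F(l)=F(l-1)+F(l-2)$ comes from the dichotomy on $\phi$, with the ``at most one semidirect product per window of three'' constraint emerging from the observation that a nontrivial $\phi$ requires $P_2$ to survive in $H^{\mathrm{ab}}$, which happens exactly when the $(p_2,p_3)$-edge of $H$ is direct. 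Your version buys something the paper's proof leaves implicit: an explicit verification that the $F(l)$ chain descriptions really give pairwise non-isomorphic groups (via $Z(G)$ and $C_G(P_1)/P_1$ as isomorphism invariants). Two small points to tighten. First, normality of $P_1$ does not follow from solvability alone (compare $S_4$); you need that groups of squarefree order are supersolvable, or equivalently the metacyclic structure theorem, from which $p_1 \mid e$ because $p_1$ cannot divide $\varphi(e)$ when $p_1$ is the largest prime. Second, in identifying the $p_2-1$ nontrivial actions up to isomorphism, it is precomposition with automorphisms of the $C_{p_2}$ direct factor of $H$ that does the work; postcomposition with $\Aut(C_{p_1})$ contributes nothing, since it acts on the abelian group $\Aut(P_1)$ by (trivial) conjugation.
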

\begin{example}\label{fib_3_example}
    Let $n=p_1p_2p_3$, then we get the following $F(3)=3$ groups
    \begin{align*}
	& C_{p_1} \times C_{p_2} \times C_{p_3},\\
	& (C_{p_1} \rtimes C_{p_2}) \times C_{p_3},\\
	& C_{p_1} \times (C_{p_2} \rtimes C_{p_3}).
    \end{align*}
\end{example}
\begin{proof}[Proof of Proposition \ref{fibbonacci}]
    Firstly, as $n$ is squarefree, by \cite{MM84} we know that any group of order $n$ is metacyclic of the form
    \[C_e \rtimes C_d,\]
    where $n=ed$ is some factorisation of $n$ and $C_d$ acts faithfully on $C_e$. In particular, $C_e$ is normal. The elements of $C_d$ may therefore be identified with a subset of automorphisms of $C_e$, so $d$ must divide $\varphi(e)$. Thus if, say, $p_i \mid d$, then $p_i \mid \varphi(e)$, which is only possible if $p_{i-1} \mid e$, as we recall that $\varphi(p_{i-1})=2p_i$. Further, if $p_{i+1} \mid d$ as well, then by the same logic, we have $p_i \mid e$, which gives a contradiction, and thus we must have $p_{i-1}p_{i+1} \mid e$. We then obtain a $C_{p_i}$ factor of $C_d$ acting non-trivially on the $C_{p_{i-1}}$ factor of $C_e$. We may continue to choose primes which divide $d$ in this manner, which leads to the presentation in the statement of the proposition (indeed each prime dividing $d$ introduces a semidirect product between cyclic groups of `consecutive' prime order). It is clear to see that this does give us a group as it is the direct product of a cyclic group and several metacyclic groups, where each metacyclic group has the form $C_{p_i}\rtimes C_{p_{i+1}}$.
 
    The number of such groups can be obtained by using induction. Indeed, we note that there is just $F(1)=1$ group, $C_{p_1}$ of size $n=p_1$. For $n=p_1p_2$, we obtain the cyclic group $C_{p_1}\times C_{p_2}$ and the metacyclic group $C_{p_1}\rtimes C_{p_2}$, giving us $F(2)=2$ groups in total. Now, for some $m$, assume for all $k\leq m$, that there are $F(k)$ groups of order $n=p_1\cdots p_k$. Consider $n=p_1\cdots p_kp_{k+1}$. We may introduce a semidirect product between the last two cyclic factors; there are $k-1$ cyclic factors remaining, for which we know there are $F(k-1)$ such groups. If we choose instead to have a direct product between the last two cyclic factors, there are $F(k)$ such groups of this form. In total, we have $F(k)+F(k-1)=F(k+1)$ groups. Therefore, for $n=p_1\cdots p_l$, there are $F(l)$ groups of order $n$.
\end{proof}

\subsection{The cyclic group}\label{cyclic_cunningham}
Let $N$ be the cyclic group of order $n=p_1\cdots p_l$. This corresponds to the case in Proposition \ref{fibbonacci} where all actions are trivial. Suppose then that $N=\langle \sigma_1,\ldots,\sigma_l \rangle\cong C_n$, where $\sigma_i$ has order $p_i$. For $1 \leq i \leq l-1$, let $\alpha_i,\beta_i \in \Aut(\langle \sigma_i \rangle)$ have orders $p_{i+1},2$ respectively; further, let $\gamma,\delta \in \Aut(\langle \sigma_l \rangle)$ have orders $2^x,s$ respectively, where $p_l-1=2^xs$ with $s$ odd. Then the set of these elements generates $\Aut(N)$. We will write
\[\text{Hol}(N)=\langle \sigma_1,\ldots,\sigma_{l} \rangle \rtimes \left(\langle \alpha_1,\ldots,\alpha_{l-1}\rangle \times \langle \beta_1,\ldots,\beta_{l-1},\gamma \rangle \times \langle \delta \rangle \right),\]
which has order $2^{x+l-1}p_1p_2^2\cdots p_l^2s$. For ease of notation, we will sometimes denote $\delta$ by $\alpha_l$ and $\gamma$ by $\beta_l$. We now move towards finding and counting the subgroups of $\Aut(N)$.\\
It is clear that the subgroups of $\langle \alpha_1,\ldots,\alpha_{l-1} \rangle$ are of the form $\langle \alpha_1^{a_1},\ldots,\alpha_{l-1}^{a_{l-1}} \rangle$ where $a_i \in \left\{0,1\right\}$, as $\alpha_j$ and $\alpha_j$ have coprime orders for $i \neq j$; there are $2^{l-1}$ such subgroups.\\
Recall that $\sigma_0(s)$ denotes the number of divisors of $s$. The subgroups of $\langle \delta \rangle$ are of the form $\langle \delta^{s/d} \rangle$, where $d$ is some divisor of $s$; there are $\sigma_0(s)$ such subgroups.
\begin{proposition}\label{2-subgroups}
The number of subgroups of $\langle \beta_1,\ldots,\beta_{l-1},\gamma\rangle$ is
\[\Sigma_l:=\sum_{k=0}^{l-1}(2^{l-(k+1)}x+1)\prod_{i=1}^k\frac{2^{l-i}-1}{2^i-1}.\]
\end{proposition}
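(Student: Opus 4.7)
The plan is to identify $A := \langle \beta_1, \ldots, \beta_{l-1}, \gamma \rangle$ with the abelian $2$-group $B \times C$, where $B := \langle \beta_1, \ldots, \beta_{l-1} \rangle \cong (C_2)^{l-1}$ and $C := \langle \gamma \rangle \cong C_{2^x}$. This is immediate because each $\beta_i$ and $\gamma$ acts on a distinct cyclic direct factor of $N$, so they commute in $\Aut(N)$ and generate a direct product. Subgroups of $A$ can then be parameterised via Goursat's lemma: for each $H \leq A$, set $U := H \cap B$ and $W := H \cap C$, so that $H/(U \times W)$ embeds diagonally into $(B/U) \times (C/W)$, projecting isomorphically onto subgroups of each factor. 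Since $B/U$ has exponent $2$ and $C/W$ is cyclic, the order of this common diagonal quotient is forced to be $1$ or $2$.

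The next step is to fix $U \leq B$ of rank $j$ and count the subgroups $H$ with $H \cap B = U$. If the diagonal quotient is trivial then $H = U \times W$ with $W$ any subgroup of $C_{2^x}$, contributing $x+1$ choices. If the quotient has order $2$, one chooses an order-$2$ subgroup of $B/U \cong (C_2)^{l-1-j}$, of which there are $2^{l-1-j} - 1$, and then $W$ must be a proper subgroup of $C$, contributing $x$ choices; the order-$2$ subgroup of $C/W$ and the isomorphism between the two copies of $C_2$ are both uniquely determined because $C/W$ is cyclic. Summing the two cases gives $(x+1) + (2^{l-1-j}-1)x = 2^{l-j-1}x + 1$ subgroups with $H \cap B = U$, and this formula remains valid when $j = l-1$ since the second case then contributes zero automatically.

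Finally, combining this with the earlier rank-counting lemma (applied with the ambient group $(C_2)^{l-1}$), which supplies $\prod_{i=1}^j \frac{2^{l-i}-1}{2^i-1}$ for the number of rank-$j$ subgroups of $B$, and summing over $0 \leq j \leq l-1$, produces the stated expression for $\Sigma_l$. The only genuine obstacle is setting up the parameterisation cleanly and observing the asymmetry between $B$ (elementary abelian) and $C$ (cyclic): it is this asymmetry that caps the diagonal quotient at $C_2$ and collapses the count of compatible $(W, Y_1/W)$ pairs to the single factor $x$. Everything else is straightforward bookkeeping.
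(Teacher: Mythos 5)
Your proof is correct, and it takes a genuinely different route from the paper. The paper writes each subgroup of $\langle\beta_1,\ldots,\beta_{l-1},\gamma\rangle$ as the row space of a generating matrix, reduces to row echelon form, shows all nonzero $\gamma$-exponents can be normalised to a single power $2^{x-c}$, sets up for each $c$ a bijection with RREF matrices over $\mathbb{F}_2$, corrects by inclusion--exclusion for the $(x-1)$-fold over-count of subgroups contained in $\langle\beta_1,\ldots,\beta_{l-1}\rangle$, and then spends the second half of the proof algebraically massaging $x\sum_{k=0}^{l}\prod_{i=1}^{k}\frac{2^{l+1-i}-1}{2^i-1}-(x-1)\sum_{k=0}^{l-1}\prod_{i=1}^{k}\frac{2^{l-i}-1}{2^i-1}$ into the stated closed form. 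You instead apply Goursat's lemma to the decomposition $(C_2)^{l-1}\times C_{2^x}$, fibre over $U=H\cap B$, and observe that the diagonal quotient is capped at $C_2$ because $B/U$ is elementary abelian while $C/W$ is cyclic; the local count $(x+1)+(2^{l-1-j}-1)x=2^{l-1-j}x+1$ then reproduces the coefficient in $\Sigma_l$ exactly, so no simplification step is needed at all. Your argument is shorter and explains structurally where the summand $(2^{l-(k+1)}x+1)$ comes from (the $+1$ is the split case $H=U\times W$ with $W=C$ together with one of the $x$ non-split levels, or more precisely: $x+1$ split subgroups plus $(2^{l-1-j}-1)x$ graph-type subgroups); what the paper's heavier machinery buys is the explicit normal form $\langle\beta_1'\gamma^{2^{x-c_1}},\ldots,\gamma^{2^{x-c_l}}\rangle$ for the subgroups themselves, which is descriptive information beyond the bare count. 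Both arguments are complete; the one detail worth making explicit in yours is that the isomorphism $\phi\colon Y_1/U\to Y_2/W$ in the Goursat datum is unique because both quotients are $C_2$, which you do note.
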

\begin{proof}
    Let $B:=\langle \beta_1,\ldots,\beta_{l-1},\gamma\rangle$ and $B':=\langle \beta_1,\ldots,\beta_{l-1}\rangle$. We first determine the form of an arbitrary subgroup $\langle z_1,\ldots, z_r \rangle$ of $B$. Suppose then that $z_i=\beta_1^{a_{i1}}\beta_2^{a_{i2}}\cdots\beta_{l-1}^{a_{il-1}}\gamma^{a_{il}}$. We can consider the following matrix
    \[\begin{pmatrix}
        a_{11} & a_{12} & \cdots & a_{1l}\\
        a_{21} & a_{22} & \cdots & a_{2l}\\
        \vdots & \vdots & \ddots & \vdots\\
        a_{r1} & a_{r2} & \cdots & a_{rl}
    \end{pmatrix}\]
    where each row represents a generator of the subgroup. The entries can be seen to be in $\mathbb{Z}/2^x\mathbb{Z}$, but we have $a_{ij} \in \{0,1\}$ for $j<l$. We note that applying elementary row operations does not change the subgroup, and so we may assume without loss of generality that $a_{ii}=\gcd(a_{ii},a_{(i+1)i},\ldots,a_{ri})$ and then that $a_{ij}=0$ for each $1 \leq i \leq l$ and $i<j \leq r$. We obtain the following $l \times l$ matrix, which is in row echelon form:
    \[\begin{pmatrix}
        a_{11} & a_{12} & \cdots & a_{1l}\\
        0 & a_{22} & \cdots & a_{2l}\\
        \vdots & \vdots & \ddots & \vdots\\
        0 & 0 & \cdots & a_{ll}
    \end{pmatrix}\]
    where we delete rows $l+1,\ldots,r$, which are all zero. Thus an arbitrary subgroup of $B$ has the form
\begin{equation}\label{subgroup}
    \langle \beta'_1\gamma^{2^{x-c_1}},\ldots,\beta'_{l-1}\gamma^{2^{x-c_{l-1}}},\gamma^{2^{x-c_l}}\rangle
\end{equation}
    for some fixed $0 \leq c_i \leq x$, $1\leq i \leq l$, and some $\beta'_j \in B'$, $1 \leq j \leq l-1$. We note further that if we have two generators of the subgroup $g_1:=\beta'_i\gamma^{2^{x-c_i}}, g_2:=\beta'_j\gamma^{2^{x-c_j}}$ such that $0 \leq c_i \leq c_j$, then $g_2^{-2^{c_j-c_i}}g_1=\beta'_i$, and so we may assume that $c_i=0$. More generally therefore, we may assume that $c_i=c_j$ for all nonzero $c_i,c_j$ in (\ref{subgroup}). Thus we now denote $c_i$ simply as $c$.
 
    We now count the subgroups $S$ of this form for a fixed $c$ with $1 \leq c \leq x$.

    (i$_c$) If $\gamma^{2^{x-c}}$ is a generator of $S$, then we can assume all the other generators involve only $\beta_1,\ldots, \beta_{l-1}$. So $S$ corresponds to a matrix of $l$ columns in reduced row echelon form (RREF) such that the first $l-1$ columns have entries in $\{0,1\}$ and the final column has all zeros apart from $2^{x-c}$ in the last entry.
   
    (ii$_c$) If $\gamma^{2^{x-c}}$ does not occur as a generator of $S$, then $S$ corresponds to a matrix of $l$ columns in RREF such that the first $l-1$ columns have entries in $\{0,1\}$ and entries in the final column are either $0$ or $2^{x-c}$, with $0$ in the last entry.
    
    We note that the cases (i$_c$) are disjoint from (i$_{c'}$) for $c \neq c'$ and from (ii$_{c''}$) for any $1 \leq c'' \leq x$. Further, we note that the matrix determines the value of $c$ except when the last column consists entirely of zeros; in this case the matrix corresponds to a subgroup of $B'$. Thus, given two values of $c$, say $c$ and $c'$, the cases (ii$_c$) and (ii$_{c'}$) are not disjoint, but intersect precisely when every entry of the final column of the matrix is equal to 0.
    
    Now, for each $c>0$, we have a bijection between the matrices in (i$_c$) and the matrices in (ii$_c$), and the matrices with $l$ columns over $\mathbb{F}_2$ in RREF by replacing each occurrence of $2^{x-c}$ by $1$, whenever it occurs. The number of these matrices is the same as the number of subgroups of $(C_2)^l$. That is, by Corollary \ref{c_2_subgroups}:
    \begin{equation}\label{l-RREF}
        \sum_{k=0}^l\prod_{i=1}^k\frac{2^{l+1-1}-1}{2^i-1}.
    \end{equation}
    To obtain the total number of subgroups of $B$, we note that we can just sum (\ref{l-RREF}) over $1 \leq c \leq x$, with the caveat that we over-count by
    \[(x-1)\sum_{k=1}^{l-1}\prod_{i=1}^k\frac{2^{l-i}-1}{2^i-1}.\]
    This is precisely due to the intersection between the two cases (ii$_c$) and (ii$_{c'}$) described above. In theory, the subgroups of $B'$ correspond to the case $c=0$, but this is already covered as discussed. So we need to take away $(x-1)$ times the number of subgroups of $B' \cong (C_2)^{l-1}$ from our initial count, as we need to count it exactly once. By Corollary \ref{c_2_subgroups}, the number of subgroups of $B'$ is precisely
    \[\sum_{k=1}^{l-1}\prod_{i=1}^k\frac{2^{l-i}-1}{2^i-1}.\]
    Therefore, the number of subgroups of $B$ is:
    \begin{equation}\label{subsofB}
        x\sum_{k=0}^l\prod_{i=1}^k\frac{2^{l+1-i}-1}{2^i-1}-(x-1)\sum_{k=0}^{l-1}\prod_{i=1}^k\frac{2^{l-i}-1}{2^i-1}.
    \end{equation}
    We claim that this simplifies to
    \[\sum_{k=0}^{l-1}(2^{l-(k+1)}x+1)\prod_{i=1}^k\frac{2^{l-i}-1}{2^i-1}.\]
    To see this, we may first split the sum (\ref{subsofB}) as follows:
    \begin{equation}\label{split_sum}
        x\left[\sum_{k=0}^l\prod_{i=1}^k\frac{2^{l+1-i}-1}{2^i-1}-\sum_{k=0}^{l-1}\prod_{i=1}^k\frac{2^{l-i}-1}{2^i-1}\right]+\sum_{k=0}^{l-1}\prod_{i=1}^k\frac{2^{l-i}-1}{2^i-1}.
    \end{equation}
    Noting that the product
    \[\prod_{i=1}^k\frac{2^{l+1-i}-1}{2^i-1}\]
    evaluates to $1$ when $k=l$, we have that the `$x$' part of (\ref{split_sum}) can be written as
    \[1+\sum_{k=0}^{l-1}\left[\prod_{i=1}^k\frac{2^{l+1-i}-1}{2^i-1}-\prod_{i=1}^k\frac{2^{l-i}-1}{2^i-1}\right].\]
    We further note that the summand for $k=0$ evaluates to $1-1=0$, and therefore we may start the sum from $k=1$. Considering, then, the difference between the two products in the summand, this evaluates to
    \[1+\sum_{k=1}^{l-1}\frac{2^l-2^{l-k}}{2^k-1}\prod_{i=1}^{k-1}\frac{2^{l-i}-1}{2^i-1}.\]
    Noting now that $(2^l-2^{l-k})/(2^k-1)=2^{l-k}$ and making the change of variables $k \mapsto k+1$, we see that this is equal to
    \[1+\sum_{k=0}^{l-2}2^{l-(k+1)}\prod_{i=1}^k\frac{2^{l-i}-1}{2^i-1}.\]
    Finally, noting that when $k=l-1$, the summand evaluates to $1$, and hence the `$x$' part of the sum can be written as
    \[\sum_{k=1}^{l-1}2^{l-(k+1)}\prod_{i=1}^k\frac{2^{l-i}-1}{2^i-1}.\]
    The rest is clear.
\end{proof}
\begin{lemma}
    $\Hol(N)$ contains a unique Hall $\pi$-subgroup $H$, where $\pi$ is the set of primes dividing $n$.
\end{lemma}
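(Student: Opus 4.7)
The plan is to produce a single explicit Hall $\pi$-subgroup and show it is normal in $\Hol(N)$; then by Hall's theorem (together with the observation that $\Hol(N)$ is solvable) all Hall $\pi$-subgroups are conjugate, so normality forces uniqueness.

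First I would record the order. Since $\Aut(N)\cong C_{p_1-1}\times\cdots\times C_{p_l-1}$ is abelian, and since $p_i-1=2p_{i+1}$ for $i<l$ while $p_l-1=2^xs$, we get
\[
|\Hol(N)|=n\,\varphi(n)=2^{x+l-1}\,p_1p_2^2\cdots p_l^2\,s.
\]
The primes outside $\pi$ are therefore $2$ and the primes dividing $s$, and the $\pi$-part of $|\Hol(N)|$ equals $p_1p_2^2\cdots p_l^2$. The natural candidate
\[
H:=N\rtimes\langle\alpha_1,\ldots,\alpha_{l-1}\rangle
\]
is a subgroup (since $N\trianglelefteq\Hol(N)$), and the orders $|\sigma_i|=p_i$ and $|\alpha_j|=p_{j+1}$ multiply to give $|H|=p_1p_2^2\cdots p_l^2$, matching the $\pi$-part exactly. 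Hence $H$ is a Hall $\pi$-subgroup.

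Next I would verify that $H$ is normal in $\Hol(N)$. Any element of $\Hol(N)$ has the form $\mu b$ with $\mu\in N$ and $b\in\Aut(N)$; since $N\subseteq H$, conjugation by $\mu$ preserves $H$, so it suffices to show $bHb^{-1}=H$. For this I would write $H=N\cdot A$ with $A:=\langle\alpha_1,\ldots,\alpha_{l-1}\rangle$, and use that $bNb^{-1}=N$ (normality of $N$) together with $bAb^{-1}=A$. The latter is immediate because $A\le\Aut(N)$ and $\Aut(N)$ is abelian, so $b$ commutes with $A$ elementwise. Thus $bHb^{-1}=(bNb^{-1})(bAb^{-1})=NA=H$, and $H\trianglelefteq\Hol(N)$.

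Finally I would appeal to Hall's theorem. The group $\Hol(N)=N\rtimes\Aut(N)$ is an extension of abelian groups, hence solvable, so every Hall $\pi$-subgroup exists and all such subgroups are conjugate. Since $H$ is normal, its conjugacy class is a singleton, and $H$ is the unique Hall $\pi$-subgroup of $\Hol(N)$. There is no real obstacle here: the only mildly delicate point is recognising that the abelian structure of $\Aut(N)$ is what makes the $\Aut(N)$-part of $H$ automatically normal inside $\Aut(N)$, which in turn upgrades $H$ from merely a Hall $\pi$-subgroup to the Hall $\pi$-subgroup.
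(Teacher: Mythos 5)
Your proof is correct and follows essentially the same route as the paper: both exhibit $H=\langle\sigma_1,\ldots,\sigma_l,\alpha_1,\ldots,\alpha_{l-1}\rangle=N\rtimes\langle\alpha_1,\ldots,\alpha_{l-1}\rangle$ as a normal Hall $\pi$-subgroup, with uniqueness then following from solvability and the conjugacy of Hall $\pi$-subgroups. The paper simply asserts this is "easy to see", whereas you have supplied the order count, the normality verification (using that $\Aut(N)$ is abelian), and the explicit appeal to Hall's theorem.
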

\begin{proof}
    It is easy to see that the subgroup
    \[H:=\langle \sigma_1,\ldots, \sigma_l, \alpha_1,\ldots,\alpha_{l-1} \rangle\]
    is a normal Hall $\pi$-subgroup of $\Hol(N)$.
\end{proof}
\begin{corollary}
    If $M\leq\Hol(N)$ is transitive on $N$, then $M\cap H$ must also be transitive on $N$.
\end{corollary}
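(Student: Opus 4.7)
The plan is to exploit the uniqueness of $H$ from the preceding lemma (which forces $H\lhd\Hol(N)$) and then to match $\pi$-parts on both sides of the orbit--stabiliser theorem.

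First I would observe that $M\cap H$ is itself a Hall $\pi$-subgroup of $M$. Indeed, normality of $H$ gives an embedding $M/(M\cap H)\hookrightarrow\Hol(N)/H$, and the latter has order $2^{x+l-1}s$, which is a $\pi'$-number; thus the index $[M:M\cap H]$ is coprime to every prime in $\pi$. Combining this with transitivity of $M$ on $N$ --- which by orbit--stabiliser forces $|M|=n\,|M_0|$ for $M_0:=\mathrm{Stab}_M(1_N)\leq\Aut(N)$ --- and recalling that $n$ is itself a $\pi$-number, I would conclude $|M\cap H|=n\cdot|M_0|_\pi$, where $|M_0|_\pi$ denotes the $\pi$-part of $|M_0|$.

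Next I would identify the stabiliser of $1_N$ inside $M\cap H$. Writing $A:=\langle\alpha_1,\ldots,\alpha_{l-1}\rangle$, so that $H=N\rtimes A$, one sees that $H\cap\Aut(N)=A$ and $\mathrm{Stab}_{\Hol(N)}(1_N)=\Aut(N)$, whence $\mathrm{Stab}_{M\cap H}(1_N)=M\cap A=M_0\cap A$. Since $|A|=p_2\cdots p_l$ is the full $\pi$-part of $|\Aut(N)|$ and $\Aut(N)$ is abelian (as $N$ is cyclic), $A$ is the unique Hall $\pi$-subgroup of $\Aut(N)$, and so $M_0\cap A$ is the unique Hall $\pi$-subgroup of $M_0$. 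Hence $|M_0\cap A|=|M_0|_\pi$, and orbit--stabiliser gives the $M\cap H$-orbit of $1_N$ size
\[\frac{|M\cap H|}{|M_0\cap A|}=\frac{n\cdot|M_0|_\pi}{|M_0|_\pi}=n=|N|,\]
which is the required transitivity.

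The step that will require the most care is the last identification: I need $M_0\cap A$ to be genuinely the \emph{full} Hall $\pi$-subgroup of $M_0$, not merely some $\pi$-subgroup. This is precisely where abelianness of $\Aut(N)$ --- i.e.\ the cyclicity of $N$ that underlies this subsection --- is doing real work, and I would flag it as the key structural ingredient of the argument.
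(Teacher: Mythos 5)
Your proof is correct. It rests on the same mechanism as the paper's: the uniqueness (hence normality) of the Hall $\pi$-subgroup $H$ forces $[M:M\cap H]$ to be a $\pi'$-number, and one then matches $\pi$-parts in the orbit--stabiliser count. The paper packages this as a double factorisation of the single index $[M:(M\cap H)\cap\mathrm{Stab}_{\Hol(N)}(1_N)]$ (adapted from Lemma 5 of \cite{Cre22}), whereas you compute $|M\cap H|$ and $|\mathrm{Stab}_{M\cap H}(1_N)|$ explicitly and divide; these are two presentations of the same argument. The one genuine divergence is your final step: you identify $\mathrm{Stab}_{M\cap H}(1_N)=M_0\cap A$ and then appeal to the abelianness of $\Aut(N)$ to see that $M_0\cap A$ is the \emph{full} Hall $\pi$-subgroup of $M_0$. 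That works, but the ingredient you flag as essential is in fact dispensable: the same embedding you used for $M$, applied to $M_0$ in place of $M$, gives $M_0/(M_0\cap H)\hookrightarrow\Hol(N)/H$, so $[M_0:M_0\cap H]$ is a $\pi'$-number and $M_0\cap H=M_0\cap A$ is automatically a Hall $\pi$-subgroup of $M_0$, with no reference to the structure of $\Aut(N)$. This is effectively what the paper's second factorisation encodes, and it is what makes the argument portable to settings where the relevant automorphism group is not abelian; your version buys concreteness (one sees exactly which automorphisms survive into $M\cap H$) at the cost of tying the proof to the cyclic case.
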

\begin{proof}
    This is adapted from the proof of Lemma 5 of \cite{Cre22}. Consider the index of $(M \cap H) \cap \mathrm{Stab}_{\Hol(N)}(1_N)$ in $M$; we note that we can factorise it in the following two ways:
    \begin{align*}
        &[M:(M \cap H) \cap \mathrm{Stab}_{\Hol(N)}(1_N)] =\\
        &=[M:M \cap H][M \cap H: (M \cap H) \cap \mathrm{Stab}_{\Hol(N)}(1_N)]\\
        &=[M:M \cap \mathrm{Stab}_{\Hol(N)(1_N)}][M \cap \mathrm{Stab}_{\Hol(N)(1_N)}:(M \cap H) \cap \mathrm{Stab}_{\Hol(N)}(1_N)].
    \end{align*}
    Now, as $H$ is the unique Hall $\pi$-subgroup of $\Hol(N)$, then $M \cap H$ is a Hall $\pi$-subgroup of $M$, and so $[M:M \cap H]$ is coprime with $|N|$. We also have that
    \[[M \cap \mathrm{Stab}_{\Hol(N)(1_N)}:(M \cap H) \cap \mathrm{Stab}_{\Hol(N)}(1_N)]=\frac{|M \cap \mathrm{Stab}_{\Hol(N)(1_N)}|}{|(M \cap H) \cap \mathrm{Stab}_{\Hol(N)}(1_N)|}=\frac{|\mathrm{Stab}_M(1_N)|}{|\mathrm{Stab}_{M \cap H}(1_N)|}\]
    is coprime with $|N|$. Therefore, we must have that
    \[|\mathrm{Orb}_{M \cap H}(1_N)|=[M \cap H: (M \cap H) \cap \mathrm{Stab}_{\Hol(N)}(1_N)]=|N|\]
    and hence $M \cap H$ is transitive on $N$.
\end{proof}
For the next Proposition, we recall that $[\eta,\alpha]$ denotes an element of $\Hol(N)=N \rtimes \Aut(N)$, where $\eta \in N$ and $\alpha \in \Aut(N)$.
\begin{proposition}\label{H_subgroups}
    The subgroups $M$ of $H$ which are transitive on $N$ are of the form
    \[\langle \sigma_1 \rangle \times A_2 \times \cdots \times A_l,\]
    where, for each $2 \leq i\leq l$, $A_i$ is either $\left\langle \left[\sigma_i,\alpha_{i-1}^{t_i}\right] \right\rangle$ of order $p_i$ or $\left\langle \sigma_i,\alpha_{i-1} \right\rangle$ of order $p_i^2$, with $0\leq t_i \leq p_i-1$.
    
    For each $2 \leq i \leq l-1$, if both $A_i$ and $A_{i+1}$ have prime order, then either $A_i=\langle \sigma_i \rangle$ or $A_{i+1}=\langle \sigma_{i+1} \rangle$.
\end{proposition}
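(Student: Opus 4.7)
My plan is to analyse transitive $M\leq H$ by its Sylow subgroups. The preceding corollary gives that $M$ is transitive on $N$, so $|N|=p_1\cdots p_l$ divides $|M|$ and each $p_i$ divides $|M|$. The Sylow $p_1$-subgroup of $H$ is $\langle \sigma_1\rangle$, which is central in $H$ (no $\alpha_j$ acts on $\sigma_1$), hence unique and therefore equal to the Sylow $p_1$-subgroup of $M$. For $i\geq 2$, the elements $\sigma_i$ and $\alpha_{i-1}$ both have order $p_i$ and commute (since $\alpha_{i-1}\in\Aut(\langle \sigma_{i-1}\rangle)$ fixes $\sigma_i$), so $\langle \sigma_i,\alpha_{i-1}\rangle\cong C_{p_i}\times C_{p_i}$ is a Sylow $p_i$-subgroup of $H$. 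After conjugating $M$ within $H$ if necessary, the Sylow $p_i$-subgroup $A_i$ of $M$ sits inside $\langle \sigma_i,\alpha_{i-1}\rangle$ and is either the full subgroup (order $p_i^2$), or one of the $p_i+1$ order-$p_i$ subgroups $\langle \alpha_{i-1}\rangle$, $\langle \sigma_i\rangle$, or $\langle [\sigma_i,\alpha_{i-1}^{t_i}]\rangle$ with $1\leq t_i\leq p_i-1$.

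I would then eliminate $A_i=\langle \alpha_{i-1}\rangle$ using transitivity. Since the Sylow subgroups $A_1,\ldots,A_l$ of $M$ have pairwise coprime orders and $|M|=\prod_j |A_j|$, the product set $A_1A_2\cdots A_l\subseteq M$ has cardinality $|M|$, so every element of $M$ factors uniquely as $a_1a_2\cdots a_l$ with $a_j\in A_j$. A direct computation in $\Hol(N)=N\rtimes\Aut(N)$, using that each $\alpha_j$ fixes $\sigma_k$ for $k\neq j$, yields
\[\pi_N(a_1 a_2\cdots a_l)=\sigma_1^{u_1}\sigma_2^{u_2}\cdots\sigma_l^{u_l},\]
where $\pi_N$ denotes projection to $N$ and $u_i$ is the $\sigma_i$-exponent of $a_i\in A_i$. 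Transitivity of $M$ on $N$ is equivalent to $\pi_N(M)=N$, hence to each $A_i$ having nontrivial $\sigma_i$-projection, and this rules out only the case $A_i=\langle\alpha_{i-1}\rangle$.

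The crucial step is the consistency constraint, which I would establish by a commutator calculation that I expect to be the main obstacle. Assume $A_i=\langle \sigma_i\alpha_{i-1}^{t_i}\rangle$ and $A_{i+1}=\langle \sigma_{i+1}\alpha_i^{t_{i+1}}\rangle$ both have prime order with $t_i,t_{i+1}\neq 0$, and let $k$ be the integer with $\alpha_i(\sigma_i)=\sigma_i^k$; then $k$ has multiplicative order $p_{i+1}$ modulo $p_i$. Exploiting that $\sigma_i,\sigma_{i+1},\alpha_{i-1}$ all commute with each other and with $\alpha_i$ apart from the defining action of $\alpha_i$ on $\sigma_i$, the group commutator $ghg^{-1}h^{-1}$ with $g=\sigma_i\alpha_{i-1}^{t_i}$ and $h=\sigma_{i+1}\alpha_i^{t_{i+1}}$ simplifies to $\sigma_i^{1-k^{t_{i+1}}}$, which is a nontrivial power of $\sigma_i$. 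Hence $\sigma_i\in M$, so the Sylow $p_i$-subgroup of $M$ contains both $\sigma_i$ and $\sigma_i\alpha_{i-1}^{t_i}$; this forces $|A_i|=p_i^2$, contradicting the prime-order assumption. For the converse, I would show that any $A_2,\ldots,A_l$ of the stated forms satisfying the constraint produces a subgroup $\langle \sigma_1\rangle A_2\cdots A_l$ of the correct size $\prod |A_i|$: closure follows from pairwise analysis (non-consecutive Sylow subgroups commute inside $H$, while the constraint kills the obstructive commutator for consecutive pairs), and transitivity is then immediate from the projection formula above. The subtle case analysis when one of a consecutive pair is the full Sylow $\langle \sigma_i,\alpha_{i-1}\rangle$ is where I expect the most delicate bookkeeping to arise.
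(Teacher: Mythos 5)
Your overall strategy---decompose a transitive $M\leq H$ according to its Sylow subgroups, use transitivity to exclude $\langle\alpha_{i-1}\rangle$, and obtain the consistency constraint from a commutator equal to a nontrivial power of $\sigma_i$---is essentially the paper's proof; your commutator $\sigma_i^{1-k^{t_{i+1}}}$ is precisely the conjugation computation the paper performs in its final paragraph. One small correction first: $\langle\sigma_1\rangle$ is \emph{not} central in $H$, since $\alpha_1\in\Aut(\langle\sigma_1\rangle)$ conjugates $\sigma_1$ to a nontrivial power of itself; it is, however, normal (indeed characteristic), which is all you need to conclude it is the unique Sylow $p_1$-subgroup of $H$ and hence lies in $M$.

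The genuine gap is the step ``after conjugating $M$ within $H$ if necessary.'' The subgroups $\langle\sigma_i,\alpha_{i-1}\rangle$ are not normal in $H$: conjugating by $\sigma_{i-1}$ sends $\alpha_{i-1}$ to $[\sigma_{i-1}^{1-k},\alpha_{i-1}]$, so $H$ has $p_{i-1}$ distinct Sylow $p_i$-subgroups and an arbitrary transitive $M$ may a priori have its Sylow $p_i$-subgroup inside a non-standard one, generated by elements such as $[\sigma_{i-1}^{d}\sigma_i,\alpha_{i-1}^{c}]$ with $d\neq 0$. Replacing $M$ by a conjugate only classifies transitive subgroups up to $H$-conjugacy, whereas the proposition---and the counting of Hopf--Galois structures built on it via Lemma \ref{Byott_num_HGS}---requires the exact list of subgroups. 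To close this you must either (i) verify that the listed family is stable under $H$-conjugation (so ``conjugate to a listed group'' implies ``is a listed group'') and that one conjugating element standardises all the Sylow subgroups simultaneously, or (ii) argue directly on $M$: transitivity and closure force $M$ to contain the relevant power of $\sigma_{i-1}$ (e.g.\ because the commutator of $[\sigma_{i-1}^{d}\sigma_i,\alpha_{i-1}^{c}]$ with the generator of the adjacent factor is again a nontrivial power of $\sigma_{i-1}$), after which the offending generator can be straightened to $[\sigma_i,\alpha_{i-1}^{c}]$ \emph{inside} $M$. The paper glosses over the same point by writing $H$ as a direct product of factors that do not in fact commute, so this is where the real work of the proposition lies; as written, your proposal has not carried it out. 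A further, more minor, unproved assertion is that $|A_1A_2\cdots A_l|=\prod_j|A_j|$ for more than two pairwise coprime factors, which you need for the projection formula to capture all of $\pi_N(M)$.
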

\begin{proof}
    We may write
    \[H=\left\langle \sigma_1 \right\rangle \times \left\langle \sigma_2,\alpha_1 \right\rangle \times \cdots \times \left\langle \sigma_l, \alpha_{l-1} \right\rangle.\]
    The first factor has order $p_1$, and each subsequent factor has order $p_i^2$ for $2\leq i \leq l$. Therefore the subgroups $M$ of $H$ are direct products of subgroups of each of the factors. The subgroups of $\langle \sigma_1 \rangle$ are $\{1\}$ and $\langle \sigma_1 \rangle$, and the subgroups of each factor $\left\langle \sigma_i,\alpha_{i-1} \right\rangle$ are $\{1\}$, $\langle \alpha_{i-1} \rangle$, $\left\langle \left[\sigma_i, \alpha_{i-1}^{t_i}\right] \right\rangle$ for $0 \leq t_i \leq p_i-1$, and $\left\langle \sigma_i,\alpha_{i-1} \right\rangle$.
	
    Note that if any of the factors of $M$ are trivial, then $n \nmid |M|$, so $M$ cannot be transitive on $N$. Further, if, for any $2 \leq i \leq l$, the $i$\textsuperscript{th} factor is $\langle \alpha_{i-1} \rangle$, then $M$ is not transitive as there is no element in $M$ sending $1_N$ to $\sigma_i \in N$. It is clear that if all the factors are of the other types, then $M$ will be transitive. We finally need to check that what we have is indeed a group. Note that any two non-adjacent factors commute, so now suppose that there are adjacent factors of the form $\left\langle\left[\sigma_i,\alpha_{i-1}^{t_i}\right]\right\rangle$ (say, for $i=j,j+1$). The factor for $i=j+1$ must normalise the factor for $i=j$. We have
    \[\left[\sigma_{j+1},\alpha_j^{t_{j+1}}\right]\left[\sigma_j,\alpha_{j-1}^{t_j}\right]\left[\sigma_{j+1},\alpha_j^{t_{j+1}}\right]^{-1}=\left[\sigma_j^{a_j^{t_{j+1}}},\alpha_{j-1}^{t_j}\right],\]
    which is in $\left\langle\left[\sigma_j,\alpha_{j-1}^{t_j}\right]\right\rangle$ if and only if either $t_j=0$ or $t_{j+1}=0$. Therefore, if the factors $A_i$ and $A_{i+1}$ have prime order, then either $A_i=\langle \sigma_i \rangle$ or $A_{i+1}=\langle \sigma_{i+1} \rangle$.
\end{proof}
\begin{corollary}
    Suppose $L/K$ is Galois of degree $n$, where $n$ is an $l$-Cunningham product for some $l$. Then $L/K$ admits a Hopf--Galois structure of cyclic type.
\end{corollary}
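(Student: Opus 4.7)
The plan is to apply Byott's Translation: since $L/K$ is Galois, $G' = 1$, so a Hopf--Galois structure of type $N = C_n$ on $L/K$ corresponds to a regular subgroup of $\Hol(C_n)$ isomorphic to the Galois group $G$. Thus for each of the $F(l)$ isomorphism classes of $G$ from Proposition \ref{fibbonacci}, I must exhibit such a regular subgroup.

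From the proof of Proposition \ref{fibbonacci}, each $G$ is determined by a subset $S \subseteq \{1, \ldots, l-1\}$ with no two consecutive elements, where $i \in S$ records the semidirect product $C_{p_i} \rtimes C_{p_{i+1}}$. Proposition \ref{H_subgroups} classifies the transitive subgroups of the Hall $\pi$-subgroup $H$ of $\Hol(C_n)$; those of order $n$ (i.e., the regular ones) must have the form
\[M = \langle \sigma_1 \rangle \times A_2 \times \cdots \times A_l,\]
where each $A_i = \langle [\sigma_i, \alpha_{i-1}^{t_i}] \rangle$ has order $p_i$, subject to the constraint that no two consecutive $t_i, t_{i+1}$ are both nonzero.

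Given $S$, I would set $t_i = 1$ if $i-1 \in S$ and $t_i = 0$ otherwise. The non-adjacency of $S$ translates exactly into the constraint above, so the resulting $M$ is a regular subgroup of $\Hol(C_n)$ of order $n$. To verify $M \cong G$: non-adjacent factors $A_i, A_j$ commute because $\alpha_{i-1}$ fixes $\sigma_j$ for $|i-j| \geq 2$; for adjacent factors, the commutator computation already carried out in the proof of Proposition \ref{H_subgroups} shows that conjugation by the generator of $A_{i+1}$ raises the generator of $A_i$ to its $a_i^{t_{i+1}}$-th power, which is trivial iff $t_{i+1} = 0$, i.e. iff $i \notin S$. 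Hence $A_{i+1}$ acts non-trivially on $A_i$ precisely for $i \in S$, matching the structure of $G$, so $M \cong G$.

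The main obstacle, such as it is, lies in the commutator bookkeeping within the holomorph: one must track how each $\alpha_{i-1}^{t_i}$ acts on nearby generators to rule out unwanted cross-terms. This is notation-heavy but essentially routine, and is already performed in the proof of Proposition \ref{H_subgroups}. A pleasing feature is that the non-consecutive condition on $S$ from Proposition \ref{fibbonacci} corresponds exactly to the adjacency constraint in Proposition \ref{H_subgroups}, so the construction parametrises regular subgroups matching all possible Galois groups uniformly.
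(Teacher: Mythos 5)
Your proposal is correct and follows essentially the same route as the paper: both exhibit, for each of the $F(l)$ groups $G$ of order $n$ from Proposition \ref{fibbonacci}, a regular subgroup of the Hall $\pi$-subgroup $H$ of $\Hol(C_n)$ isomorphic to $G$, using the classification in Proposition \ref{H_subgroups}. The paper's proof is a two-line identification of the factors $C_{p_{i-1}}\rtimes C_{p_i}$ and $C_{p_{i-1}}\times C_{p_i}$ with the corresponding subgroups; yours adds the explicit appeal to Byott's translation with $G'=1$ and the check that the non-adjacency constraints match, but the underlying argument is identical.
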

\begin{proof}
    We see that all the groups in Proposition \ref{fibbonacci} appear in the list in Proposition \ref{H_subgroups}. We may identify the factor $C_{p_{i-1}} \rtimes C_{p_i}$ with the subgroup
    \[\left\langle \sigma_{i-1}, \left[\sigma_i,\alpha_{i-1}^{t_i}\right] \right\rangle,\]
    and identify the factor $C_{p_{i-1}} \times C_{p_i}$ with the subgroup $\langle \sigma_{i-1}, \sigma_i \rangle$.
\end{proof}
In light of Proposition \ref{H_subgroups}, we give the following definitions:
\begin{definition}
    Let $I:=\{i_1,\ldots,i_u\}$ for some $u \leq l$ be a subset of $\{2,\ldots,l\}$ such that no two consecutive integers are contained in $I$.

    Given such an $I$, let $\bt:=(t_{i_j})_{i_j \in I}$ be the sequence of length $|I|$ such that $1 \leq t_{i_j} \leq p_{i_j}-1$ for each $t_{i_j}$ in $\bt$. For ease of notation, we sometimes drop the parentheses, and we denote the empty sequence by $\emptyset$.

    Finally, given an $I$ and a corresponding $\bt$, define the groups $J_{I,\bt}$ by
    \[J_{I,\bt}:=\left\langle \sigma_j, \left[\sigma_i,\alpha_{i-1}^{t_i}\right] \mid i \in I, j \in \{1,\ldots,l\}\setminus I \right\rangle,\]
    and the group $N_I \leq J_{I,\bt}$ by:
    \[N_I:=\langle \sigma_j \mid j \in \{1,\ldots,l\} \setminus I \rangle.\]
\end{definition}
\begin{remark}\label{gps_order_n}
    Let $\mathcal{I}$ be the set of all sets $I$ described above, and
    \[\mathcal{J}:=\{J_{I,\bt_1(I)} \mid I \in \mathcal{I}\},\]
    where $\bt_1(I)$ is the sequence of length $|I|$ containing all 1's. Then it is clear that there is a bijection between $\mathcal{J}$ and the set of abstract isomorphism classes of the groups of order $n$.
\end{remark}
\begin{example}\label{subset_examples}
    First if $n$ is an arbitrary $l$-Cunningham product for some $l$ with $I=\emptyset$, then $\bt=\emptyset$, so  $J_{\emptyset,\emptyset}\cong C_n$.
    
    Now suppose $n=p_1p_2$ and $I=\{2\}$ with $\bt=t$, then $J_{I,\bt} = J_{\{2\},t}= \left\langle \sigma_1, \left[\sigma_2,\alpha_1^t\right]\right\rangle$; this is the group $J_t \cong C_{p_1} \rtimes C_{p_2}$ defined in \cite{BML21}.
\end{example}
We further define the set
\[S_I:=\{i \mid i-1 \text{ and } i \in \{1,\ldots,l\}\setminus I\},\]
along with the subgroup $A_I \leq \Aut(N_I)$ by
\[A_I:=\langle \alpha_i,\beta_j \mid i \in S_I, j \in \{1,\ldots,l\} \setminus I \rangle.\]
\begin{proposition}\label{cyclic_trans}
    The transitive subgroups of $\Hol(N)$ are of the form
    \[J_{I,\bt} \rtimes A,\]
    where $A$ is any subgroup of $A_I$.
\end{proposition}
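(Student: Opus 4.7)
The plan is to combine three ingredients: the preceding corollary, which forces any transitive $M \leq \Hol(N)$ to have $M \cap H$ transitive on $N$; Proposition \ref{H_subgroups}, which lists precisely the transitive subgroups of $H$; and Schur--Zassenhaus, which produces a Hall $\pi'$-complement for $M \cap H$ inside $M$.

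First I would take a transitive subgroup $M \leq \Hol(N)$ and apply the preceding corollary to conclude that $M \cap H$ is transitive on $N$. Writing $M \cap H = \langle \sigma_1 \rangle \cdot A_2 \cdots A_l$ as in Proposition \ref{H_subgroups}, I define
\[
I = \{\,i \in \{2,\ldots,l\} : A_i = \langle [\sigma_i,\alpha_{i-1}^{t_i}]\rangle \text{ with } t_i \neq 0\,\}
\]
and $\bt = (t_i)_{i \in I}$. The no-two-consecutive restriction in Proposition \ref{H_subgroups} guarantees that $I$ is a valid index set in the sense of Remark \ref{gps_order_n}, and each generator of $J_{I,\bt}$ lies in $M \cap H$, so $J_{I,\bt} \leq M$.

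Next I describe the complement. The elements of $M \cap H$ outside $J_{I,\bt}$ are exactly the extra automorphisms $\alpha_{i-1}$ coming from any factor $A_i = \langle \sigma_i, \alpha_{i-1} \rangle$ of order $p_i^2$. The remaining elements of $M$, which lie outside $H$, form by Schur--Zassenhaus a Hall $\pi'$-complement to $M \cap H$ in the solvable group $M$, and project into the abelian subgroup $\langle \beta_1,\ldots,\beta_{l-1},\gamma,\delta\rangle$ of $\Aut(N)$. Let $A \leq \Aut(N)$ be the subgroup generated by the images of all these extra elements under the projection $\Hol(N) \to \Aut(N)$.

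The main technical step is to show $A \leq A_I$ and $M = J_{I,\bt} \rtimes A$. Normalization of $J_{I,\bt}$ by $\Aut(N)$-elements is checked on generators using the abelianness of $\Aut(N)$: a direct computation shows that $\tau \in \Aut(N)$ normalizes $J_{I,\bt}$ if and only if $\tau|_{\langle \sigma_i\rangle} = \mathrm{id}$ for every $i \in I$, which forces $\beta_j \in A \Rightarrow j \notin I$. The extra condition $i-1 \notin I$ built into $S_I$ arises from the compatibility of a $p_i^2$-order factor $\langle\sigma_i,\alpha_{i-1}\rangle$ of $M \cap H$ with its neighbouring $(i-1)$-st factor: since $\alpha_{i-1}$ acts non-trivially on $\sigma_{i-1}$, the neighbouring factor must be untwisted, i.e.\ $i-1 \notin I$. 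Finally $M = J_{I,\bt}\,A$ follows from the index computation $|M| = |M \cap H| \cdot [M : M \cap H]$ together with $J_{I,\bt} \cap A = 1$. The hardest part will be managing the mixed case where $M \cap H$ contains both twisted prime-order and $p_i^2$-order factors, and verifying that the asymmetric condition defining $S_I$ is exactly the compatibility constraint that emerges. Conversely, for any admissible $I$, $\bt$, and $A \leq A_I$, the set $J_{I,\bt}\,A$ is a subgroup because $A$ normalizes $J_{I,\bt}$ by the definition of $A_I$, and it is transitive on $N$ because $J_{I,\bt}$ already surjects onto $N$ under the projection $\Hol(N) = N \rtimes \Aut(N) \to N$.
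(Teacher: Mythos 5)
Your overall skeleton is the same as the paper's: reduce to $M\cap H$ via the preceding corollary, read off $I$ and $\bt$ from Proposition \ref{H_subgroups}, compute that an automorphism normalises $J_{I,\bt}$ exactly when it fixes $\sigma_i$ for every $i\in I$, and then impose $A\leq A_I$ to avoid listing the same subgroup twice (the paper handles this last point by observing that $\alpha_{i-1}\in A$ with $i\in I$ collapses $J_{I,\bt}\rtimes A$ to $J_{I\setminus\{i\},\bt'}\rtimes A$). The converse direction and the normaliser computation in your sketch are fine.

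The gap is in how you produce the complement. You define $A$ as the subgroup of $\Aut(N)$ \emph{generated by the images} of the leftover elements under the projection $\Hol(N)\to\Aut(N)$, and then claim $M=J_{I,\bt}\,A$ from an index count. But the projection of $[\eta,\alpha]\in M$ is $\alpha$, which need not lie in $M$; a priori $J_{I,\bt}\,A$ is not even a subset of $M$, and no order count can repair that. (These projections do in fact lie in $M$, but only after one shows that every element of $M$ normalises $J_{I,\bt}$, which forces the $\Aut(N)$-part of each element of $M$ to fix $\sigma_i$ for $i\in I$, hence forces the $N$-part of each $\pi'$-element of $M$ to be supported on the $\sigma_j$ with $j\notin I$ --- and those $\sigma_j$ lie in $M$ and can be stripped off. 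None of this appears in your write-up, and you explicitly defer the ``hardest part''.) The clean way to close the gap, and what the paper's argument implicitly relies on, bypasses projections and Schur--Zassenhaus entirely: take $A:=\mathrm{Stab}_M(1_N)=M\cap\Aut(N)$. Since $J_{I,\bt}\leq M$ is regular, $|M|=n\cdot|A|$ and $J_{I,\bt}\cap A=\{1\}$, so $M=J_{I,\bt}\,A$ on the nose; normality of $J_{I,\bt}$ in $M$ (hence the semidirect decomposition and $A\leq\Aut(N_I)$) then follows because $M\cap H\trianglelefteq M$ and any conjugate of $[\sigma_i,\alpha_{i-1}^{t_i}]$ lying in $M\cap H$ with automorphism part $\alpha_{i-1}^{t_i}$ must equal $[\sigma_i,\alpha_{i-1}^{t_i}]$ up to a factor in $N_I$.
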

\begin{proof}
    We note that every transitive subgroup listed in Proposition \ref{H_subgroups} contains a regular normal subgroup, which has the form $J_{I,\bt}$ for some $I$ and $\bt$. The normaliser of $J_{I,\bt}$ in $\Aut(N)$ is precisely $\Aut(N_I)$, since if $\phi \in \Aut(N)\setminus \Aut(N_I)$, then $\phi(\sigma_i)\neq \sigma_i$ for some $i \in I$, and so $\phi \left[\sigma_i,\alpha_{i-1}^{t_i}\right]\phi^{-1}=\left[\phi(\sigma_i),\alpha_{i-1}^{t_i}\right]\notin J_{I,\bt}$. Therefore $J_{I,\bt}$ may be extended by any subgroup $A$ of $\Aut(N_I)$ to give a transitive subgroup of $\Hol(N)$, and any transitive subgroup is obtained in this way.

    We note, however, that, given an $I,\bt$ and $A \leq \Aut(N_I)$, then if $\alpha_{i-1} \in A$ for some $i \in I$, we have $J_{I,\bt} \rtimes A = J_{I',\bt'} \rtimes A$, where $I'=I\setminus\{i\}$ and $\bt'$ is the sequence $\bt$ with the term $t_i$ removed. Thus in order to list the transitive subgroups with no repeats, we must have $A \leq A_I$.
\end{proof}
    For each group $J_{I,\bt} \rtimes A$, the stabiliser, $A$, of $1_N$ has normal complement, $J_{I,\bt}$, and so all the corresponding field extensions are almost classically Galois. We now explore which groups are isomorphic as permutation groups.
\begin{proposition}\label{cyclic_isoms_cunningham}
    For a fixed $I$ and $A$, the $\prod_{i \in I}(p_i-1)$ transitive groups of the form $J_{I,\bt} \rtimes A$ listed in Proposition \ref{cyclic_trans} are isomorphic as permutation groups. There are no other isomorphisms, even as abstract groups.
\end{proposition}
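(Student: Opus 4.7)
The plan is to handle the two assertions of the proposition separately. For the first (isomorphism of permutation groups for fixed $I, A$ with varying $\bt$), I would exhibit an explicit $\phi\in\Aut(N)$ conjugating $J_{I,\bt}\rtimes A$ to $J_{I,\bt'}\rtimes A$ inside $\Hol(N)$. Given sequences $\bt=(t_i)_{i\in I}$ and $\bt'=(t'_i)_{i\in I}$, take $\phi=\prod_{i\in I}\phi_i$ where $\phi_i\in\Aut(\langle\sigma_i\rangle)$ sends $\sigma_i\mapsto\sigma_i^{a_i}$ with $a_i\equiv t_i(t'_i)^{-1}\pmod{p_i}$; this is well-defined since each $t'_i$ is coprime to $p_i$. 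A direct application of the semidirect product rule in $\Hol(N)$ gives
\[\phi\bigl[\sigma_i,\alpha_{i-1}^{t_i}\bigr]\phi^{-1} = \bigl[\sigma_i^{a_i},\alpha_{i-1}^{t_i}\bigr] = \bigl[\sigma_i,\alpha_{i-1}^{t'_i}\bigr]^{a_i},\]
a generator of $\langle[\sigma_i,\alpha_{i-1}^{t'_i}]\rangle$, while $\phi(\sigma_j)=\sigma_j$ for $j\notin I$. Hence $\phi J_{I,\bt}\phi^{-1}=J_{I,\bt'}$.

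The normalisation of $A$ is automatic: $\Aut(N)$ is abelian, and $\phi$ lies in the direct factors $\Aut(\langle\sigma_i\rangle)$ for $i\in I$, whereas the generators of $A\leq A_I$ lie in the direct factors $\Aut(\langle\sigma_k\rangle)$ for $k\in\{1,\ldots,l\}\setminus I$, so $\phi$ commutes with every element of $A$. Therefore $\phi(J_{I,\bt}\rtimes A)\phi^{-1}=J_{I,\bt'}\rtimes A$; by Byott's translation, the corresponding regular subgroups of $\text{Perm}(X)$ coincide, so $J_{I,\bt}\rtimes A$ and $J_{I,\bt'}\rtimes A$ yield the same Hopf--Galois structure and in particular are isomorphic as permutation groups.

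For the second assertion, the plan is to recover $(I,A)$ from the abstract isomorphism type of $M=J_{I,\bt}\rtimes A$. The direct product decomposition of $J_{I,\bt}$ as cyclic factors $C_{p_j}$ (one for each $j\notin I$ with $j+1\notin I$) together with metacyclic factors $C_{p_{i-1}}\rtimes C_{p_i}$ (one for each $i\in I$) encodes $I$ unambiguously, since the pattern of non-abelian direct factors determines the set $I$. To identify $J_{I,\bt}$ as a canonical normal subgroup of $M$, I would first observe that the Sylow $p_1$-subgroup $\langle\sigma_1\rangle$ is always normal in $M$ (since $p_1$ is the largest prime divisor of $|M|$, as $|A|$'s prime factors all lie in $\{2,p_2,\ldots,p_l\}$ together with primes dividing $s<p_l$), and then iteratively apply Hall's theorem to the solvable group $M$ to isolate $J_{I,\bt}$ as the unique normal subgroup of order $n$ with the appropriate structure. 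Once $I$ is identified, Proposition \ref{rel-aut-char} together with the abelianness of $\Aut(N)$ lets us read off $A\leq A_I$ from its action on the cyclic factors of $J_{I,\bt}$.

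The main obstacle lies in this second assertion when $|A|$ shares prime factors with $n$, which happens exactly when $A$ contains some $\alpha_k$ of order $p_{k+1}$. In such cases the Sylow $p_{k+1}$-subgroup of $M$ has order $p_{k+1}^2$ rather than $p_{k+1}$, and one must carefully separate the contribution of $\alpha_k\in A$ from the contribution of $J_{I,\bt}$ inside this Sylow subgroup. A case analysis exploiting the Cunningham relation $p_i-1=2p_{i+1}$—which tightly restricts both the prime divisors of $|A|$ and the structure of $\Aut(N)$—should carry this through, but making it fully rigorous requires nontrivial bookkeeping of the normal subgroup structure of $M$ across the various combinations of $(I,A)$.
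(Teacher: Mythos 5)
Your argument for the first assertion is correct and is essentially the paper's: conjugation by an automorphism $\phi$ scaling each $\sigma_i$ ($i\in I$) carries $J_{I,\bt}$ to $J_{I,\bt'}$, commutes with $A$ because $\Aut(N)$ is abelian and $\phi$ lives in the factors $\Aut(\langle\sigma_i\rangle)$ for $i\in I$ while $A\leq A_I$ lives in the complementary factors, and fixes $1_N$, hence gives an isomorphism of permutation groups. (The paper normalises every $\bt$ to $\bt_1(I)$ rather than passing between arbitrary $\bt,\bt'$, but that is cosmetic.)

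The second assertion is where your proposal has a genuine gap, as you yourself acknowledge. Two specific points. First, to distinguish $J_{I,\bt}\rtimes A$ from $J_{I',\bt'}\rtimes A'$ for $I\neq I'$ you appeal to the direct-product decomposition of $J_{I,\bt}$ into cyclic and metacyclic factors, but an abstract isomorphism of the full groups need not respect the splitting $J\rtimes A$, so you need an isomorphism invariant of $M$ itself; the paper uses the existence or non-existence of an abelian subgroup of order $\prod_{j\notin I}p_j$ (namely $N_I$), treating the cases $I\not\subset I'$ and $I\subset I'$ separately. Second, and more importantly, for a fixed $I$ you must show that $J_{I,\bt}\rtimes A\cong J_{I,\bt}\rtimes A'$ forces $A=A'$; your plan to ``read off $A$'' via Proposition \ref{rel-aut-char} does not accomplish this (that proposition computes $\Aut(M,A)$; it does not compare two complements of $J_{I,\bt}$). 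The paper instead adapts the argument of Proposition \ref{ab-aut}: since $N_I$ is a characteristic abelian subgroup of $M$ and $\Aut(N_I)$ is abelian, any isomorphism $\phi$ intertwines the conjugation actions on $N_I$, giving $C_\alpha=C_{\phi(\alpha)}=C_{\alpha'}$ inside $\Aut(N_I)$ and hence $\alpha=\alpha'$ for each $\alpha\in A$. That conjugation argument is precisely what disposes of the situation you flag as problematic (when $|A|$ shares prime factors with $n$); without it, or the case analysis you defer, the claim that there are no further isomorphisms remains unproven.
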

\begin{proof}
    Firstly, consider two subsets $I \neq I'$, with $A,A'$ such that $A\leq A_I$, $A'\leq A_{I'}$, and fix the appropriate $\bt,\bt'$ so that $J_{I,\bt}\rtimes A$ and $J_{I',\bt'} \rtimes A'$ are distinct transitive subgroups. Then, if $I \not\subset I'$, we have that $J_{I,\bt}\rtimes A$ contains an abelian subgroup of order $\prod_{i\in \{1,\ldots, l\}\setminus I}p_i$, namely $N_I$, but there is no abelian subgroup of this order in $J_{I',\bt'}\rtimes A'$, and so they cannot be isomorphic as abstract groups. If $I \subset I'$, then we may apply the same argument on $J_{I',\bt}\rtimes A'$ with $N_I$ replaced by $N_{I'}$. Next, for a subset $I$ and a fixed choice for $A$, we show that the groups of the form $J_{I,\bt} \rtimes A$ are isomorphic; indeed, choose a $\phi \in \Aut(N)$ such that $\phi(\sigma_i)=\sigma_i^{t_i}$ for each $i \in I$, then
    \[\phi\left[\sigma_i,\alpha_{i-1}^{t_i}\right]\phi^{-1}=\left[\phi(\sigma_i),\alpha_{i-1}^{t_i}\right]=\left[\sigma_i,\alpha_{i-1}\right]^{t_i}.\]
    Therefore conjugation by $\phi$ gives an isomorphism between $J_{I,\bt}$ and $J_{I,\bt_1(I)}$, where $\bt_1(I)$ is defined as in Remark \ref{gps_order_n}. For any appropriate $A$, this isomorphism extends to an isomorphism $J_{I,\bt}\rtimes A\to J_{I,\bt_1(I)} \rtimes A$. This isomorphism fixes the stabiliser $A$ of $1_N$, and so the groups are isomorphic as permutation groups.
 
    Suppose now that there is an isomorphism $\phi$ between the groups $J_{I,\bt} \rtimes A$ and $J_{I,\bt} \rtimes A'$, then we claim that $A=A'$; we modify the proof of Proposition $2.2$ in \cite{BML21}. Note that $J_{I,\bt}$ contains the characteristic abelian group $N_I$, and that $A,A'$ are both subgroups of $\Aut(N_I)$, which is abelian. For $g \in \Hol(N)$, write $C_g$ for conjugation by $g$. Then take $\alpha \in A$ such that $\phi(\alpha)=[\eta,\alpha']$ where $\eta \in N$ and $\alpha' \in A'$ (we may infer that $\alpha \in A'$ by comparing orders of elements and from the fact that $N_I$ is characteristic). For $\mu \in N_I$, we have $C_{\alpha}(\mu)=\alpha\mu\alpha^{-1}$, so applying $\phi$ to this, we obtain
    \[\phi(C_{\alpha}(\mu))=\phi(\alpha)\phi(\mu)\phi(\alpha)^{-1}=C_{\phi(\alpha)}(\phi(\mu)).\]
    Therefore, within $\Aut(N_I)$, we have $\phi C_{\alpha} = C_{\phi(\alpha)}\phi$, where $\phi$ is now viewed as an element of $\Aut(N_I)$. As $\Aut(N_I)$ is abelian, we have that $C_{\alpha}=C_{\phi(\alpha)}$, and so
    \[\alpha=C_{\alpha}=C_{\phi(\alpha)}=C_{\eta}C_{\alpha'}=C_{\alpha'}=\alpha'\]
    as $C_{\eta}=1_{N_I}=1_N$ and $C_{\alpha}=\alpha$ by the multiplication in $\Hol(N)$. This proves the claim.
\end{proof}
\begin{proposition}\label{cyclic_HGS_Cunningham}
    Let $M:=J_{I,\bt}\rtimes A$ be a transitive subgroup of $\Hol(N)$. Then any separable extension associated to the isomorphism class of $M$ admits
    \[\prod_{i \in I}p_{i-1}^{y_{i-1}}\]
    Hopf--Galois structures of cyclic type, where $y_{i-1}=1$ whenever $\Aut(\langle \sigma_{i-1} \rangle) \cap A = \{1\}$, and $0$ otherwise.
\end{proposition}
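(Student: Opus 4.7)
The plan is to apply Byott's counting formula (Lemma \ref{Byott_num_HGS}), which gives $e(M,N) = |\Aut(M,A)| \cdot e'(M,N) / |\Aut(N)|$, with $A$ playing the role of the stabiliser $M'$. Proposition \ref{cyclic_isoms_cunningham} already supplies $e'(M,N) = \prod_{i \in I}(p_i - 1)$ (the transitive subgroups of $\Hol(N)$ in the isomorphism class of $M$ with matched stabilisers), and $|\Aut(N)| = \varphi(n) = \prod_{j=1}^l(p_j - 1)$, so the task reduces to computing $|\Aut(M,A)|$.

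To get a handle on $|\Aut(M,A)|$, I would first verify that $J_{I,\bt}$ is characteristic in $M$. It is the unique Hall $\pi(n)$-subgroup of $M$: it is normal of order $n$, and $[M : J_{I,\bt}] = |A|$ divides $\varphi(n)$ and is therefore coprime to $n$. Proposition \ref{rel-aut-char} then yields the identification $\Aut(M,A) \cong N_{\Aut(J_{I,\bt})}(A)$, with $A$ viewed inside $\Aut(J_{I,\bt})$ via conjugation on $J_{I,\bt}$.

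Next, I would exploit the internal direct product decomposition of $J_{I,\bt}$ into pairwise coprime-order factors: for each $i \in I$, the non-abelian factor $F_i := \langle \sigma_{i-1}, [\sigma_i,\alpha_{i-1}^{t_i}] \rangle \cong C_{p_{i-1}} \rtimes C_{p_i}$, and for each $j \in \{1,\ldots,l\} \setminus I$ with $j+1 \notin I$, an isolated cyclic factor $\langle \sigma_j \rangle$. Coprimality of the orders gives $\Aut(J_{I,\bt}) = \prod_{i \in I}\Aut(F_i) \times \prod_{j \text{ isolated}} \Aut(\langle \sigma_j \rangle)$, and since every generator $\alpha_k$ or $\beta_k$ of $A_I$ acts on only one $\sigma$, the image of $A$ in this product decomposes as the product of its projections, so its normaliser does as well.

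The main obstacle is then the normaliser computation inside each $\Aut(F_i) \cong \Hol(C_{p_{i-1}}) = C_{p_{i-1}} \rtimes \Aut(C_{p_{i-1}})$. Since $i \in I$ forces $\alpha_i, \beta_i \notin A$, every element of $A$ fixes $[\sigma_i,\alpha_{i-1}^{t_i}]$, so $A|_{F_i}$ sits inside the complement $\Aut(C_{p_{i-1}})$. A direct conjugation calculation in the holomorph then shows that the inner $C_{p_{i-1}}$-summand normalises $A|_{F_i}$ if and only if $A|_{F_i}$ is trivial, that is, precisely when $A \cap \Aut(\langle \sigma_{i-1}\rangle) = \{1\}$; this is the case $y_{i-1}=1$, and supplies the factor $p_{i-1}^{y_{i-1}}$. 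Combined with the fact that each isolated abelian factor contributes its whole automorphism group of order $p_j-1$, and using the disjoint union $\{1,\ldots,l\}\setminus I = \{i-1 : i \in I\} \sqcup \{j \text{ isolated}\}$, one assembles $|\Aut(M,A)| = \prod_{j \notin I}(p_j-1) \cdot \prod_{i \in I} p_{i-1}^{y_{i-1}}$. Substituting into Byott's formula, the $(p_j-1)$-factors cancel against $\varphi(n)/\prod_{i \in I}(p_i-1)$, yielding the stated count.
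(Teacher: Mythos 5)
Your overall route is the same as the paper's: show that $J_{I,\bt}$ is characteristic in $M$, invoke Proposition \ref{rel-aut-char} to identify $\Aut(M,A)$ with the normaliser of $A$ in $\Aut(J_{I,\bt})$, compute that normaliser factor by factor, and feed the result into Lemma \ref{Byott_num_HGS} together with $e'=\prod_{i\in I}(p_i-1)$ from Proposition \ref{cyclic_isoms_cunningham}. The final bookkeeping and cancellation are correct. However, two of your intermediate justifications do not hold as stated.

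First, your argument that $J_{I,\bt}$ is characteristic fails: you assert that $[M:J_{I,\bt}]=|A|$ divides $\varphi(n)$ and is \emph{therefore} coprime to $n$. In a Cunningham chain $\varphi(n)=\prod_j(p_j-1)$ is very far from coprime to $n$, since $p_{j+1}\mid p_j-1$. Concretely, $A_I$ contains elements $\alpha_j$ of order $p_{j+1}$, so $|A|$ may well be divisible by a prime of $n$, and then $J_{I,\bt}$ is not a Hall $\pi(n)$-subgroup of $M$. The paper instead deduces characteristicity from the facts that $N_I$ is characteristic in $M$ and that $|J_{I,\bt}/N_I|=\prod_{i\in I}p_i$ is coprime to $|A|$ (which holds because, by construction, $A_I$ contains no automorphism of order $p_i$ for $i\in I$); your coprimality claim needs to be replaced by an argument of this kind.

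Second, the reduction of $N_{\Aut(J_{I,\bt})}(A)$ to a product of normalisers of projections is not justified: a subgroup $A\leq A_I$ need not equal the product of its projections to the factors $\Aut(F_i)$ and $\Aut(\langle\sigma_j\rangle)$ --- consider a diagonal subgroup such as $\langle\beta_{i-1}\beta_j\rangle$. In general one only gets the containment $N(A)\subseteq\prod_k N_{G_k}(\pi_k(A))$. This is not a cosmetic point, because your criterion ``$A|_{F_i}$ trivial'' is a condition on the \emph{projection} of $A$, whereas the exponent $y_{i-1}$ in the statement is defined via the \emph{intersection} $\Aut(\langle\sigma_{i-1}\rangle)\cap A$; for diagonal $A$ these genuinely differ (for $A=\langle\beta_{i-1}\beta_j\rangle$ the intersection is trivial while the projection is not, and a direct computation shows the inner $C_{p_{i-1}}$ fails to normalise the image of $A$ in that case). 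So the ``that is'' in your last paragraph hides a real step: you must either restrict to non-diagonal $A$, or carry out the normaliser computation directly on all of $\Aut(J_{I,\bt})$ in coordinates (as the paper does with the parameters $a_{i-1}$) and then reconcile the projection condition with the intersection condition appearing in the statement.
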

\begin{proof}
    Consider first the groups of the form $M=J_{I,\bt}\rtimes A$. We note that we may apply Proposition \ref{rel-aut-char} with $J_{I,\bt}$ in place of $N$ and $M'=A$. We may do this because the stabiliser $M'$ of $1_N$ in $M$ is $A$, which is abelian, and $J_{I,\bt}$ is characteristic in $M$ as $N_I$ is characteristic and $\gcd(|J_{I,\bt}/N_I|,|A|)=1$. Conjugation by any non-identity element of $A$ acts non-trivially on some $\sigma_j\in N_I$ and fixes the group $J_{I,\bt}/N_I$. If $\phi \in \Aut(J_{I,\bt})$, we have $\phi(\sigma_j)=\sigma_j^{a_j}$ for all $j \in \{1,\ldots, l\}\setminus I$, and
    \[\phi\left(\left[\sigma_i,\alpha_{i-1}^{t_i}\right]\right)=\sigma_{i-1}^{a_{i-1}}\left[\sigma_i,\alpha_{i-1}^{t_i}\right]\]
    for each $i \in I$, where $1\leq a_j \leq p_j-1$ and $0\leq a_{i-1} \leq p_{i-1}$. Then $\phi$ normalises $A$ in $\Aut(J_{I,\bt})$ if and only if $a_{i-1}=0$ whenever $\Aut(\langle \sigma_{i-1} \rangle) \cap A \neq \{1\}$, $i \in I$. We therefore compute
    \[|\Aut(M,M')|=\prod_{i \in \{1,\ldots,l\}\setminus I}(p_i-1)\prod_{i \in I}p_{i-1}^{y_{i-1}}\] where $y_{i-1}=1$ whenever $a_{i-1}$ is allowed to be non-zero, and $0$ otherwise. There are therefore
    \[\frac{\prod_{i \in \{1,\ldots,l\}\setminus I}(p_i-1)\prod_{i \in I}p_{i-1}^{y_{i-1}}}{|\Aut(N)|}\times \prod_{i \in I}(p_i-1)=\prod_{i \in I}p_{i-1}^{y_{i-1}}\]
    Hopf--Galois structures of this type.
\end{proof}
\begin{theorem}\label{cyclic_cunningham_thm}
    There are in total
    \[\sum_{I \ni l}2^{l-2|I|}\left(\sum_{k=0}^{l-|I|}\prod_{i=1}^k\frac{2^{l-|I|-(i-1)}-1}{2^i-1}\right)+\sum_{I \not\ni l}2^{l-1-2|I|}\Sigma_{l-|I|}\sigma_0(s)\]
    isomorphism types of permutation groups $G$ of degree $n$ which are realised by a Hopf--Galois structure of cyclic type.
\end{theorem}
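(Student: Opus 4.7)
The plan is to use Proposition \ref{cyclic_isoms_cunningham}, which tells us that the permutation isomorphism classes of the transitive subgroups $J_{I,\bt}\rtimes A$ of $\Hol(N)$ are parameterised precisely by the pairs $(I,A)$ with $A \leq A_I$, the choice of $\bt$ being immaterial. The quantity we want is therefore
\[\sum_{I}\#\{A \leq A_I\},\]
where $I$ runs over subsets of $\{2,\ldots,l\}$ having no two consecutive elements.

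The key technical step is to count the subgroups of a single $A_I$, and I would do this by exhibiting $A_I$ as an internal direct product of three pieces whose orders are pairwise coprime: the \emph{$\alpha$-part} $\prod_{j \in S_I}\langle\alpha_j\rangle$, cyclic of odd squarefree order; the \emph{$\beta$-part} generated by the $\beta_j$ with $j \in \{1,\ldots,l\}\setminus I$, which is a $2$-group containing $\gamma$ of order $2^x$ exactly when $l \notin I$; and, only when $l \notin I$, the \emph{$\delta$-part} $\langle\delta\rangle \cong C_s$. Because the three orders are pairwise coprime (odd distinct primes among the $p_i$, a power of $2$, and the odd number $s$, which is coprime to all $p_i$ since $s \mid p_l-1 < p_l \le p_i$), every subgroup of $A_I$ decomposes uniquely as a product of subgroups of the three factors, and the count multiplies.

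Each factor count is then handled by a prior result. The $\alpha$-part, being cyclic of squarefree order with $|S_I|$ distinct prime factors, has $2^{|S_I|}$ subgroups. A direct combinatorial computation of $|S_I|$, using that each $i \in I$ obstructs the two values $j=i-1,i$ (these pairs being pairwise disjoint as $i$ varies, by the no-two-consecutive hypothesis on $I$) and that the value $j=l$ arising from $i=l$ falls outside the allowed range $\{1,\ldots,l-1\}$, yields $|S_I|=l-1-2|I|$ when $l \notin I$ and $|S_I|=l-2|I|$ when $l \in I$; these are exactly the exponents appearing in the two outer sums. For the $\beta$-part, when $l \in I$ the group is $(C_2)^{l-|I|}$, whose subgroups are counted by Corollary \ref{c_2_subgroups}, producing the inner double sum in the first term; when $l \notin I$ the group is $(C_2)^{l-|I|-1}\times C_{2^x}$, counted by Proposition \ref{2-subgroups} as $\Sigma_{l-|I|}$. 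The $\delta$-part contributes $\sigma_0(s)$ subgroups and is present exactly when $l \notin I$. Multiplying the three factor counts and splitting the outer sum by whether $l \in I$ then produces the displayed formula. The only step requiring genuine care is the boundary adjustment for $|S_I|$ at $l \in I$, which is what shifts the exponent from $l-1-2|I|$ to $l-2|I|$; everything else is a direct application of the preceding structural results.
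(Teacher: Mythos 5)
Your proposal is correct and follows essentially the same route as the paper: both reduce the count, via Propositions \ref{cyclic_trans} and \ref{cyclic_isoms_cunningham}, to summing the number of subgroups of $A_I$ over the admissible sets $I$, and both compute that number by factoring $A_I$ into its pairwise-coprime $\alpha$-, $\beta$- and (when $l \notin I$) $\delta$-parts, counted by Corollary \ref{c_2_subgroups}, Proposition \ref{2-subgroups} and $\sigma_0(s)$ respectively. Your write-up is merely more explicit than the paper's about the coprimality justification and the boundary computation of $|S_I|$.
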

    The first sum is taken over all subsets $I$ for which $l \in I$, and the second sum is taken over all subsets $I$ for which $l \notin I$. We note that $\Sigma_{l-|I|}$ is as defined in Proposition \ref{2-subgroups}.
\begin{proof}
    For each $I$, we count the number of choices for $A$ such that $J_{I,\bt}\rtimes A$ is a transitive subgroup described in Proposition \ref{cyclic_trans}. Recall that $A$ is any subgroup of
    \[A_I=\langle \alpha_i,\beta_j \mid i \in S_I,j \in \{1,\ldots,l\} \setminus I \rangle.\]
    If $l \in I$, then there are $2^{l-2|I|}$ subgroups of the factor of $A_I$ generated by the $\alpha_i$'s, and there are
     \[\sum_{k=1}^{l-|I|}\prod_{i=1}^k\frac{2^{l-|I|-(i-1)}-1}{2^i-1}\]
    subgroups of the factor of $A_I$ generated by the $\beta_j$'s. If $l \notin I$, then $\gamma,\delta \in A_I$, and so there are
    \[2^{l-1-2|I|}\Sigma_{l-|I|}\sigma_0(s)\]
    subgroups of $A_I$.
\end{proof}
\begin{remark}\label{cyclic_SG}
    Taking $l=2$, we have $n=p_1p_2$ with $p_2=2p_1+1$. There are two possible subsets $I$ to consider, these are $I_1=\emptyset$ and $I_2=\{2\}$ (corresponding to the groups mentioned in Example \ref{subset_examples}). We thus compute
    \begin{align*}
	&2\left(\sum_{k=0}^1\prod_{i=1}^k\frac{2^{2-i}-1}{2^i-1}\right)+2\Sigma_2\sigma_0(s)\\
	&=2+2\Sigma_2\sigma_0(s).
    \end{align*}
    We have
    \[\Sigma_2=\sum_{k=0}^1(2^{1-k}x+1)\prod_{i=1}^k\frac{2^{2-i}-1}{2^i-1}=3x+2.\]
    We therefore recover Theorem 4.4 of \cite{BML21} that there are
    \[2+2(3x+2)\sigma_0(s)=(6x+4)\sigma_0(s)+2\]
    isomorphism types of permutation groups $G$ of degree $p_1p_2$ which are realised by a Hopf--Galois structure of cyclic type.
\end{remark}

\subsection{The metacyclic groups}\label{metab-cunningham}
In this section, we consider $N$ of non-abelian type. There are $F(l)-1$ non-abelian groups of order $n=p_1p_2\cdots p_l$, with structure described in Proposition \ref{fibbonacci} where we introduce at least one non-trivial semidirect product between the factors. To fix an $N$, fix a non-empty subset $I \subset \{2,\ldots,l\}$ such that $I$ does not contain any two consecutive integers, and define
\[m:=\frac{n}{\prod_{i \in I}p_{i-1}p_i}.\]
For each $i \in I$, let $k_i$ have order $p_i \pmod{p_{i-1}}$. We may then define
\[N=\langle \sigma_1,\ldots, \sigma_l \mid \sigma_i\sigma_{i-1}=\sigma_{i-1}^{k_i}\sigma_i \; \forall i \in I \rangle \cong C_m \times \prod_{i \in I}(C_{p_{i-1}} \rtimes C_{p_i})\]
where $\sigma_j$ has order $p_j$ and two elements commute whenever no relation is given between them. For notational convenience, for each $i \in I$, we identify the subgroup $\langle \sigma_{i-1},\sigma_i \rangle$ with the factor $C_{p_{i-1}} \rtimes C_{p_i}$ and the subgroup $\langle \{\sigma_1,\ldots, \sigma_l\} \setminus \{\sigma_{i-1},\sigma_i \mid i \in I\} \rangle$ with the factor $C_m$. We make some observations:
\begin{remark}\label{cunningham_metacyclic_observations}\phantom{boo}
    \begin{enumerate}[label=$\bullet$]
        \item We have the following decomposition:
        \[\Hol(N)=\Hol\left(C_m \times \prod_{i \in I}C_{p_{i-1}} \rtimes C_{p_i}\right)=\Hol(C_m) \times \prod_{i \in I} \Hol(C_{p_{i-1}}\rtimes C_{p_i}).\]
        This is because $\Aut(A \times B)=\Aut(A)\times\Aut(B)$ whenever $A$ and $B$ have coprime orders.
        \item A subgroup $M\leq\Hol(N)$ is transitive on $N$ if and only if it is transitive on each of the factors. Thus $M$ is transitive if and only if it is transitive on $C_m$ and on $C_{p_{i-1}}\rtimes C_{p_i}$ for each $i \in I$. Further, the results of Section \ref{cyclic_cunningham} together with Section 4.2 of \cite{BML21} tell us precisely the transitive subgroups on each factor of $N$.
        \item We will next see that the factors of $\Hol(N)$ given by the first point do not have coprime orders. In particular, their orders are all divisible by $2$, and, depending on the chosen $I$, the factors $\Hol(C_m)$ and $\Hol(C_{p_{i-1}}\rtimes C_{p_i})$ may both have orders divisible by $p_{i-1}$.
    \end{enumerate}
\end{remark}
The last remark highlights that we need to understand the different ways the transitive subgroups of each factor of $\Hol(N)$ lift to obtain a transitive subgroup of $\Hol(N)$. We now give a description of $\Aut(C_m)$ and $\Aut(C_{p_{i-1}}\rtimes C_{p_i})$ for each $i \in I$.

We first describe $\Aut(C_m)$. This is similar to Section \ref{cyclic_cunningham}, but we must also account for the fact that we may have the situation where $p_j \mid m$, $p_{j+1},p_{j+2} \nmid m$ but $p_{j+3} \mid m$ for some primes $p_j,\ldots,p_{j+3}$ in the Cunningham chain. Therefore, for each $\sigma_j \in \{\sigma_1,\ldots,\sigma_{l-1}\}\setminus\{\sigma_{i-1},\sigma_i \mid i \in I\}$, let $\alpha_j,\beta_j \in \Aut(\langle\sigma_j\rangle)$ have orders $p_{j+1},2$ respectively, and if $l \notin I$, let $\alpha_l,\beta_l \in \Aut(\langle\sigma_l\rangle)$ have orders $s,2^x$ respectively. Thus
\[\Aut(C_m) = \langle \alpha_j,\beta_j \mid p_j \text{ divides } m \rangle,\]
and has order
\[
\varphi(m)=
\begin{cases*}
    2^{l+x-(1+2|I|)}s\prod_{p_i \mid m}p_{i+1}, & when $l \notin I$,\\
    2^{l-2|I|}\prod_{p_i \mid m}p_{i+1}, & when $l \in I$,
\end{cases*}
\]
Where $\varphi$ is the Euler $\varphi$-function. We now describe $\Aut(C_{p_{i-1}} \rtimes C_{p_i})$ for each $i \in I$, which we recall from \cite{BML21}. We have that $\Aut(C_{p_{i-1}} \rtimes C_{p_i})$ is generated by the elements $\phi_i,\psi_i,\theta_i$ of orders $p_i,2,p_{i-1}$ respectively such that
\begin{align*}
	&\phi_i(\sigma_{i-1})=\sigma_{i-1}^{k_i},
        &&\phi_i(\sigma_i)=\sigma_i,\\
	&\psi_i(\sigma_{i-1})=\sigma_{i-1}^{-1},
	&&\psi_i(\sigma_i)=\sigma_i,\\
	&\theta_i(\sigma_{i-1})=\sigma_{i-1},
	&&\theta_i(\sigma_i)=\sigma_{i-1}\sigma_i.	
\end{align*}
Thus each factor $\Hol(C_{p_{i-1}}\rtimes C_{p_i})$ has order $2p_{i-1}^2p_i^2$.
\begin{remark}\label{commuting_elements}
    We make a similar observation as in \cite{BML21}, that $P_{i-1}:=\langle \sigma_{i-1},\theta_i \rangle$ is the unique Sylow $p_{i-1}$-subgroup of $\Hol(C_{p_{i-1}}\rtimes C_{p_i})$, which has order $p_{i-1}^2$, and has complementary subgroup $R_i:=\langle \sigma_i,\phi_i,\psi_i \rangle$ of order $2p_i^2$. Thus we may write
    \[\Hol(C_{p_{i-1}}\rtimes C_{p_i})=P_{i-1}\rtimes R_i=\langle \sigma_{i-1},\theta_i \rangle \rtimes \langle \sigma_i,\phi_i,\psi_i \rangle.\]
    We further similarly note that
    \[[\sigma_{i-1},\theta_i^{k_i-1}]\sigma_i=[\sigma_{i-1}\theta_i^{k_i-1}(\sigma_i),\theta_i^{k_i-1}]=[\sigma_{i-1}^{k_i}\sigma_i,\theta_i^{k_i-1}]=\sigma_i[\sigma_{i-1},\theta_i^{k_i-1}],\]
    and so $[\sigma_{i-1},\theta_i^{k_i-1}]$ commutes with $\sigma_i$.
\end{remark}
Given this remark, and following the method in \cite{BML21}, if we identify $P_{i-1}$ with the vector space $\mathbb{F}_{p_{i-1}}^2$ and choose the basis $\sigma_{i-1}$, $[\sigma_{i-1},\theta^{k_i-1}]$, which we identify with the vectors $\e_1:=(1,0)^t$ and $\e_2:=(0,1)^t$ respectively, then we may further identify $R_i$ with the subgroup of $\mathrm{GL}_2(\mathbb{F}_p)$ generated by the $\mathbb{F}_{p_{i-1}}$-linear maps
\begin{align*}&A_i:=\begin{pmatrix}
		k_i & 0	\\
		0 	& k_i
	\end{pmatrix},
	&&B_i:=\begin{pmatrix}
		-1	& 0	\\
		0	& -1
	\end{pmatrix},
	&&&S_i:=\begin{pmatrix}
		k_i & 0 \\
		0	& 1
	\end{pmatrix}
\end{align*}
via $\sigma_i \mapsto S_i$, $\phi_i \mapsto A_i$ and $\psi_i \mapsto B_i$. We will redefine $P_{i-1}:=\langle \e_1,\e_2 \rangle$ and $R_i:=\langle S_i,A_i,B_i \rangle$. We remark that $\e_1$ and $\e_2$ also depend on $i$, but it will almost always be clear what we mean, so we omit this dependence until needed.

We now define the two maps $\pi$ and $\omega$ such that
\[\pi: \Hol(N) \to \Hol(C_m) \to \Aut(C_m)\]
is the projection map to $\Aut(C_m)$, and
\[\omega: \Hol(N) \to R_i\]
is the quotient map to the factor $R_i= \langle S_i,A_i,B_i \rangle$.

The subgroup
\begin{equation}\label{B}
    B:=\langle \beta_i,B_j \mid i \notin I, j \in I \rangle
\end{equation}
is the unique Sylow $2$-subgroup of $\Hol(N)$ contained in $\Aut(N)$. Therefore any Sylow $2$-subgroup of $\Hol(N)$ must be conjugate to $B$, and is hence of the form
\[B(\mathbf{x},\mathbf{a},\mathbf{b}):=\langle [\sigma_i^{x_i},\beta_i],[a_j\e_{j_1}+b_j\e_{j_2},B_j] \mid i \notin I, j \in I \rangle,\]
where $\mathbf{x}:=(x_i)_{i \notin I}$, $\mathbf{a}:=(a_j)_{j \in I}$ and $\mathbf{b}:=(b_j)_{j \in I}$, with ranges $0 \leq x_i \leq p_i-1$ and $0 \leq a_j,b_j \leq p_j-1$. The elements $\e_{j_1}$ and $\e_{j_2}$ are the basis vectors of $\mathbb{F}^2_{p_{j-1}}$.

Let $M$ be a transitive subgroup of $\Hol(N)$ and let $H$ be a Sylow $2$-subgroup of $M$. Then $H=M \cap B(\mathbf{x},\mathbf{a},\mathbf{b})$ for some choice of $\mathbf{x},\mathbf{a},\mathbf{b}$.
\begin{proposition}\label{2-groups}
    Let $M$ be a transitive subgroup of $\Hol(N)$. Then $M$ has Sylow $2$-subgroup $H$ of the form
    \[H=M \cap B((0)_{i \notin I},(a_j)_{j \in I},(-a_j)_{j \in I}).\]
\end{proposition}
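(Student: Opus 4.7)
My approach is in two steps: first show that $M$ admits a Sylow $2$-subgroup lying entirely inside $\Aut(N)$, then show that the Sylow $2$-subgroups of $\Hol(N)$ sitting inside $\Aut(N)$ are exactly the subgroups $B((0)_{i\notin I},(a_j)_{j\in I},(-a_j)_{j\in I})$.

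For the first step, I would invoke orbit--stabiliser: since $M$ is transitive on $N$ and $|N| = p_1\cdots p_l$ is odd, $|M| = |N|\cdot|\mathrm{Stab}_M(1_N)|$, so the $2$-parts of $|M|$ and of $|\mathrm{Stab}_M(1_N)|$ agree. Consequently any Sylow $2$-subgroup of $\mathrm{Stab}_M(1_N)$ is automatically a Sylow $2$-subgroup of $M$, being a $2$-subgroup of $M$ of the correct maximal order. Since $\mathrm{Stab}_{\Hol(N)}(1_N) = \Aut(N)$, this yields some Sylow $H$ of $M$ with $H \leq M\cap\Aut(N)$.

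For the second step, I would compute directly in $\Hol(N)$: for each $j \in I$, using $\theta_j(\sigma_{j-1}) = \sigma_{j-1}$,
\[\e_{j_1}\cdot\e_{j_2}^{-1} = [\sigma_{j-1},1]\cdot[\sigma_{j-1}^{-1},\theta_j^{1-k_j}] = [1,\theta_j^{1-k_j}] = \theta_j^{1-k_j}\in\Aut(N).\]
Raising to the $a_j$-th power shows that the generator $[a_j(\e_{j_1}-\e_{j_2}),B_j]$ of $B((0)_{i\notin I},\mathbf{a},-\mathbf{a})$ simplifies to $\theta_j^{a_j(1-k_j)}\psi_j\in\Aut(N)$. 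Combined with $[1,\beta_i] = \beta_i\in\Aut(N)$, this proves $B((0)_{i\notin I},\mathbf{a},-\mathbf{a})\subseteq\Aut(N)$; being conjugate to $B$ inside $\Hol(N)$ it is a Sylow $2$-subgroup of $\Hol(N)$, and since $|N|$ is odd, also of $\Aut(N)$.

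Putting the two steps together, $H$ is a $2$-subgroup of $\Aut(N)$ and so lies inside some Sylow $2$-subgroup of the form $B((0)_{i\notin I},\mathbf{a},-\mathbf{a})$. As this containing group is itself a $2$-group, $M\cap B((0)_{i\notin I},\mathbf{a},-\mathbf{a})$ is a $2$-subgroup of $M$ containing the Sylow $H$, forcing equality. The main obstacle is the notational bookkeeping in the second step: one has to unpack the multiplication inside $\Hol(N) = N\rtimes\Aut(N)$ and exploit the fact that the basis choice $\e_{j_2} = [\sigma_{j-1},\theta_j^{k_j-1}]$ carries a nontrivial $\Aut(N)$-component which cancels against the $N$-component of $\e_{j_1} = \sigma_{j-1}$, placing $\e_{j_1}-\e_{j_2}$ in $\Aut(N)$ and thereby pushing the entire generator into the stabiliser of $1_N$.
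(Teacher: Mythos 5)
Your proposal is correct, and it takes a genuinely different route from the paper's. The paper starts from an arbitrary Sylow $2$-subgroup $H=M\cap B(\mathbf{x},\mathbf{a},\mathbf{b})$ and then invokes structural facts about transitive subgroups established earlier (that $\beta_i\in\pi(M)$ forces $\sigma_i\in M$, and that $B_j$ in the image of the projection to $\Aut(N)$ forces $\e_{j_1}$ or $\e_{j_2}$ to lie in $M$) in order to multiply the generators by elements of $M\cap N$ and so exhibit a Sylow $2$-subgroup of the required shape. You instead use only orbit--stabiliser and the oddness of $|N|$ to place a Sylow $2$-subgroup of $M$ inside the point stabiliser $\Aut(N)$, and then determine which Sylow $2$-subgroups of $\Hol(N)$ lie in $\Aut(N)$; this is cleaner and independent of the classification of transitive subgroups from Section \ref{cyclic_cunningham} and \cite{BML21}. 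Your computation $\e_{j_1}\e_{j_2}^{-1}=[1,\theta_j^{1-k_j}]$, hence $[a_j(\e_{j_1}-\e_{j_2}),B_j]=\theta_j^{a_j(1-k_j)}\psi_j\in\Aut(N)$, is correct. One point to tighten: the concluding step needs the converse inclusion as well, namely that every Sylow $2$-subgroup of $\Aut(N)$ is of the form $B(\mathbf{0},\mathbf{a},-\mathbf{a})$; you announced this ("exactly") but only verified that such groups lie in $\Aut(N)$. The converse is one line --- a Sylow $2$-subgroup of $\Aut(N)$ is also Sylow in $\Hol(N)$, hence equals some $B(\mathbf{x},\mathbf{a},\mathbf{b})$, and each listed generator must then lie in $\mathrm{Stab}(1_N)$, forcing $x_i=0$ and $b_j=-a_j$ --- but it should be said. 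As a byproduct, your step 2 shows that the paper's earlier assertion that $B$ is the \emph{unique} Sylow $2$-subgroup of $\Hol(N)$ contained in $\Aut(N)$ is an overstatement: all $\prod_{j\in I}p_{j-1}$ of the groups $B(\mathbf{0},\mathbf{a},-\mathbf{a})$ lie in $\Aut(N)$, which is precisely why the proposition has the stated form.
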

\begin{proof}
    By the discussion above, we know that every Sylow $2$-subgroup $H$ of $M$ has the form $H=M \cap B(\mathbf{x},\mathbf{a},\mathbf{b})$ for some $\mathbf{x},\mathbf{a},\mathbf{b}$. However, if $\beta_i \in \pi(M)$ for some $i \notin I$, then by the discussion in Section \ref{cyclic_cunningham}, we see that $\sigma_i \in M$. Similarly, for each $j \in I$ such that $B_j$ is in the image of the projection $M \to \Aut(N)$, we have either $\e_{j_1} \in M$ or $\e_{j_2} \in M$. Given $H$, therefore, we can thus find a Sylow $2$-subgroup of $M$ of the form described in the proposition.

    For example, suppose that the element $[a\e_1+b\e_2,B_j]$ is in the image of the projection $\pi':M \to \Hol(C_{p_{i-1}} \rtimes C_{p_i})$ and that, say, $\e_1 \in M$. Then we see that $[-b(\e_1-\e_2),B_j]$ is also in $\pi'(M)$.
\end{proof}
\begin{corollary}\label{2-isoms}
    Every transitive subgroup of $\Hol(N)$ is isomorphic, as a permutation group, to a transitive subgroup $M \leq \Hol(N)$ which contains a Sylow $2$-subgroup $H$ of the form $H=M \cap B$.
\end{corollary}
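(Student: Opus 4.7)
The plan is to combine Proposition \ref{2-groups} with Sylow's theorem applied to $\Hol(N)$. By Proposition \ref{2-groups}, any transitive subgroup $M \leq \Hol(N)$ admits a Sylow $2$-subgroup $H$ of the form $H = M \cap B(\mathbf{0}, \mathbf{a}, -\mathbf{a})$ for some tuple $\mathbf{a}$. By the discussion preceding Proposition \ref{2-groups}, every Sylow $2$-subgroup of $\Hol(N)$ arises as some $B(\mathbf{x}, \mathbf{a}, \mathbf{b})$, so in particular $B$ and $B(\mathbf{0}, \mathbf{a}, -\mathbf{a})$ are conjugate in $\Hol(N)$; I would therefore pick $g \in \Hol(N)$ with $g B g^{-1} = B(\mathbf{0}, \mathbf{a}, -\mathbf{a})$.

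With this $g$ in hand, I would set $M' := g^{-1} M g$ and claim $M'$ is the group promised by the corollary. Because $\Hol(N)$ embeds in $\mathrm{Perm}(N)$ via its defining action, conjugation of a subgroup of $\Hol(N)$ by $g$ is a permutation-group isomorphism, realised on $N$ by the bijection $g^{-1}$; hence $M'$ is permutation-isomorphic to $M$. A short intersection calculation then yields
\[g^{-1} H g = g^{-1}\bigl(M \cap B(\mathbf{0}, \mathbf{a}, -\mathbf{a})\bigr) g = g^{-1} M g \cap g^{-1} B(\mathbf{0}, \mathbf{a}, -\mathbf{a}) g = M' \cap B,\]
and since $g^{-1} H g$ is a Sylow $2$-subgroup of $M'$, this is exactly the form required.

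The genuine difficulty has already been absorbed into Proposition \ref{2-groups}, where the shape of a Sylow $2$-subgroup of $M$ inside $B(\mathbf{x}, \mathbf{a}, \mathbf{b})$ was pinned down; after that, the corollary is essentially formal, since Sylow's theorem supplies the conjugator and conjugation inside $\Hol(N) \leq \mathrm{Perm}(N)$ is automatically a permutation isomorphism. If a concrete $g$ is wanted, one can write it down directly: $B_j$ acts on $P_{j-1}$ as $-\mathrm{Id}$, so conjugation of $[a_j(\e_{j_1} - \e_{j_2}), B_j]$ by $\tfrac{1}{2} a_j(\e_{j_2} - \e_{j_1}) \in P_{j-1}$ produces $B_j$; these conjugators commute with each other and with $\beta_i$ for $i \notin I$ because they lie in disjoint direct factors of $\Hol(N)$, so taking the product over $j \in I$ furnishes the required $g$.
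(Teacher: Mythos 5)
Your proof is correct and follows essentially the same route as the paper: the paper likewise conjugates the Sylow $2$-subgroup $M \cap B(\mathbf{0},\mathbf{a},-\mathbf{a})$ into $B$ by a suitable multiple of $\e_{j_1}-\e_{j_2}$ for each $j \in I$, observing that conjugation inside $\Hol(N) \leq \mathrm{Perm}(N)$ is automatically a permutation isomorphism. Your explicit conjugator $\tfrac{1}{2}a_j(\e_{j_2}-\e_{j_1})$ even carries the correct scalar (the paper writes $a_j(\e_{j_1}-\e_{j_2})$, which is off by a factor of $-\tfrac{1}{2}$ given that $B_j$ acts on $P_{j-1}$ as $-\mathrm{Id}$), so your version is, if anything, slightly more careful.
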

\begin{proof}
    Let $G$ be a transitive subgroup of $\Hol(N)$. Then by Proposition \ref{2-groups}, $G$ contains a Sylow $2$-subgroup of the form
    \[H:=G \cap B((0)_{i \notin I},(a_j)_{j \in I},(-a_j)_{j \in I}).\]
    For each $j \in I$, we note that the element $\e_{j_1}-\e_{j_2}$ is in $\Aut(N)$, and so conjugation by $a_j(\e_{j_1}-\e_{j_2})$ gives an isomorphism of permutation groups. The image of $H$ under conjugation by $a_j(\e_{j_1}-\e_{j_2})$ for each $j \in I$ is a group of the form $M \cap B$ for some transitive subgroup $M \leq \Hol(N)$.
\end{proof}
\begin{proposition}\label{cunningham_2gp_count}
    If $l \in I$, there are
    \[\sum_{k=0}^{l-|I|}\prod_{i=1}^k\frac{2^{l-|I|-(i-1)}-1}{2^i-1}\]
    subgroups of $B$.
    Otherwise, there are 
    \[\Sigma_{l-|I|}\]
    subgroups of $B$.
\end{proposition}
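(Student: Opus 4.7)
The plan is to identify $B$ explicitly as an abelian $2$-group, and then read off the subgroup count from the two previously established results Corollary \ref{c_2_subgroups} and Proposition \ref{2-subgroups}. The decomposition
\[\Hol(N)=\Hol(C_m)\times\prod_{i\in I}\Hol(C_{p_{i-1}}\rtimes C_{p_i})\]
noted in Remark \ref{cunningham_metacyclic_observations} places each generator $\beta_i$ of $B$ inside the $\Aut(C_m)$-factor and each $B_j$ inside the $R_j$-factor. Since these direct factors commute pairwise, all the generators of $B$ commute, and $B$ splits as a direct product of its projections onto these factors.

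Next I would count the generators and their orders. The $\beta_i$ that appear as generators of $B$ are precisely those for which $p_i\mid m$, that is, those with $i\in\{1,\ldots,l\}\setminus\{k-1,k:k\in I\}$, a set of cardinality $l-2|I|$; each such $\beta_i$ has order $2$ except $\beta_l$ (which appears exactly when $l\notin I$), which has order $2^x$. The generators $B_j$ for $j\in I$ contribute a further $|I|$ commuting elements of order $2$, one in each direct factor $R_j$.

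The proof then splits into the two cases of the statement. If $l\in I$, then $\beta_l$ is absent and every generator of $B$ has order $2$, so $B$ is elementary abelian of rank $(l-2|I|)+|I|=l-|I|$; that is, $B\cong(C_2)^{l-|I|}$, and Corollary \ref{c_2_subgroups} applied with $l$ replaced by $l-|I|$ gives the claimed formula. If $l\notin I$, then instead $B\cong C_{2^x}\times(C_2)^{l-|I|-1}$, which has exactly the same structure as the group $\langle\beta_1,\ldots,\beta_{l-1},\gamma\rangle$ of Proposition \ref{2-subgroups} with $l$ replaced by $l-|I|$; the count $\Sigma_{l-|I|}$ is then immediate from that proposition.

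The entire argument is essentially bookkeeping and I do not anticipate a serious obstacle; the one point requiring care is tracking exactly which indices the $\beta_i$ are defined for, and checking that the contributions of $l-2|I|$ elementary factors from the $\beta_i$'s and $|I|$ elementary factors from the $B_j$'s combine to give rank $l-|I|$, with $\beta_l$ supplying the unique non-elementary cyclic factor $C_{2^x}$ in the case $l\notin I$.
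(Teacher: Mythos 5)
Your proof is correct and takes essentially the same approach as the paper, whose entire proof is just a citation of Corollary \ref{c_2_subgroups} and Proposition \ref{2-subgroups}; you supply the structural identification $B\cong(C_2)^{l-|I|}$ when $l\in I$ and $B\cong C_{2^x}\times(C_2)^{l-|I|-1}$ when $l\notin I$ that the paper leaves implicit, and the rank bookkeeping $(l-2|I|)+|I|=l-|I|$ is right.
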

\begin{proof}
    See Propositions \ref{c_2_subgroups} and \ref{2-subgroups}.
\end{proof}
Now if, for some $i \in I$, we also have $i-2 \in I$, then $P_{i-1}=\langle \e_1,\e_2 \rangle$ is the unique Sylow $p_{i-1}$-subgroup of $\Hol(N)$, and so, by Lemma 4.5 of \cite{BML21}, $M$ contains either $\langle \e_1 \rangle, \langle \e_2 \rangle$, or $P_{i-1}$ as a Sylow $p_{i-1}$-subgroup. However, if $i-2 \notin I$, then there is an element $\alpha_{i-2} \in \Hol(C_m)$ of order $p_{i-1}$. Proposition \ref{p_i-groups} describes the different situations that could arise, but we first need the following lemma:
\begin{lemma}\label{coprime_elt}
    Let $M \leq \Hol(N)$ be transitive. Then $M$ contains an element of the form $(\beta,[\mathbf{v},SA_i^aB_i^b])$, where $0 \leq a \leq p_i-1$, $0 \leq b \leq 1$, $\mathbf{v} \in P_{i-1}$, and $\beta \in \Hol(N)$ is an element of order $2$ which commutes with $\alpha_{i-2}$ and the factor $\Hol(C_{p_{i-1}} \rtimes C_{p_i})$ of $\Hol(N)$.
\end{lemma}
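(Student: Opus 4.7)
The plan is to first use transitivity of $M$ on $N$ to locate an element of $M$ whose $i$-th-factor projection already has the prescribed shape, and then to clean up the remaining component by a suitable exponentiation.

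Since $M$ acts transitively on $N$ it acts transitively on each direct factor; in particular the projection $\pi_i(M)$ to $\Hol(C_{p_{i-1}}\rtimes C_{p_i})=P_{i-1}\rtimes R_i$ is transitive on $C_{p_{i-1}}\rtimes C_{p_i}$. A direct computation of the action of a general element $[\mathbf{v},S^c A_i^{a'} B_i^{b'}]$ on $1_{C_{p_{i-1}}\rtimes C_{p_i}}$ shows that the $\sigma_i$-component of the image equals $\sigma_i^c$; so for the orbit of $1$ to reach $\sigma_i$ the group $\pi_i(M)$ must contain an element with $\gcd(c,p_i)=1$. Raising to the multiplicative inverse of $c$ modulo $p_i$ then yields $m\in M$ with $\pi_i(m)=[\mathbf{v}_0,SA_i^{a_0}B_i^{b_0}]$ for some $\mathbf{v}_0\in P_{i-1}$, $0\le a_0\le p_i-1$, $b_0\in\{0,1\}$. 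Writing $m=\gamma\cdot m_i$, where $m_i=[\mathbf{v}_0,SA_i^{a_0}B_i^{b_0}]$ lies in the $i$-th Holomorph factor and $\gamma$ lies in the complementary factor $\Hol(C_m)\times\prod_{j\in I\setminus\{i\}}\Hol(C_{p_{j-1}}\rtimes C_{p_j})$, we get for free that $\gamma$ commutes with the entire factor $\Hol(C_{p_{i-1}}\rtimes C_{p_i})$.

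The key arithmetic observation is that $p_i\nmid \mathrm{ord}(\gamma)$: since $i\in I$ neither $p_{i-1}$ nor $p_i$ divides $m$, whence $p_i\nmid \varphi(m)$ and $p_i\nmid |\Hol(C_m)|$; and for each $j\in I\setminus\{i\}$ the no-consecutive-indices condition on $I$ forces $p_i\notin\{p_{j-1},p_j\}$, so $p_i\nmid |\Hol(C_{p_{j-1}}\rtimes C_{p_j})|$. By the Chinese Remainder Theorem I can then choose $t$ satisfying $t\equiv 1\pmod{p_i}$ — guaranteeing that $(SA_i^{a_0}B_i^{b_0})^t$ is again of the form $SA_i^{a}B_i^{b}$, because $R_i$ is abelian — together with a congruence modulo $\mathrm{ord}(\gamma)$ that reduces $\gamma^t$ to the unique involution in the $2$-primary part of $\langle\gamma\rangle$. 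Replacing $m$ by $m^t$ then produces an element of $M$ of the form $(\beta,[\mathbf{v},SA_i^{a}B_i^{b}])$ with $\beta=\gamma^t$ of order dividing $2$.

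The main obstacle is ensuring that $\beta$ commutes with $\alpha_{i-2}$. Since $\Aut(C_m)$ is abelian and $\alpha_{i-2}$ acts only on $\langle\sigma_{i-2}\rangle$, the centralizer condition reduces to requiring that the $\sigma_{i-2}$-translation part of $\beta$ be trivial. If the $\beta$ constructed above already satisfies this, we are done; otherwise I would absorb the offending $\sigma_{i-2}$-piece by premultiplying $m$ with a suitable auxiliary element of $M$ lying in the complementary factor — supplied by the transitivity of $\pi_{i-2}(M)$ on $\langle\sigma_{i-2}\rangle$ — chosen so that its $\sigma_{i-2}$-contribution cancels $\gamma^t$'s after the exponentiation. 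Because $p_i$ is coprime to $p_{i-2}$, this cancellation can be arranged without perturbing the shape of the $i$-th-factor projection, and the resulting element of $M$ has the form stated in the lemma.
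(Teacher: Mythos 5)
Your opening steps are sound and close in spirit to the paper's: transitivity forces some element of $M$ to project onto the factor $\Hol(C_{p_{i-1}}\rtimes C_{p_i})$ with $S$-exponent coprime to $p_i$ (the paper extracts the same thing from the Sylow $p_i$-subgroup $\langle S_i,A_i\rangle$ of $\Hol(N)$), and your observation that $p_i\nmid\mathrm{ord}(\gamma)$ is correct, so exponentiating by $t\equiv 1\pmod{p_i}$ does yield an element $(\gamma^t,[\mathbf{v},SA_i^{a}B_i^{b}])$ of $M$. The gap is in your final paragraph. The $\beta=\gamma^t$ you obtain is whatever involution (or identity) happens to lie in $\langle\gamma\rangle$, and an involution of $\Hol(C_m)$ typically carries a nontrivial translation part: for instance $[\sigma_{i-2}^k,\beta_{i-2}]$ has order $2$ for every $k\neq 0$, and it does \emph{not} commute with $\alpha_{i-2}$. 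Your proposed repair does not close this. Transitivity of $M$ does not supply an element of $M$ ``lying in the complementary factor'' with a prescribed $\sigma_{i-2}$-translation; it only supplies elements whose image of $1_N$ has a prescribed $\sigma_{i-2}$-coordinate, and those elements will in general have nontrivial components in every factor, automorphism parts included. Moreover, under exponentiation the translation part transforms as $(1+\alpha+\cdots+\alpha^{t-1})(\sigma)$, which depends on the automorphism part, so the claimed ``cancellation of the $\sigma_{i-2}$-contribution'' is not the simple additive adjustment you describe and is nowhere justified.

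The paper sidesteps this entirely: it takes the $2$-part not from the complementary component of the $p_i$-element, but from Proposition \ref{2-groups}, which provides a Sylow $2$-subgroup of $M$ of the normalised form $M\cap B((0)_{i\notin I},(a_j)_{j\in I},(-a_j)_{j\in I})$. Because the $\mathbf{x}$-coordinates there are $0$, the component of such an element outside the $i$\textsuperscript{th} factor is a product of pure automorphisms $\beta_{i'}$ and of elements of the other metacyclic factors, so it commutes with $\alpha_{i-2}$ and with $\Hol(C_{p_{i-1}}\rtimes C_{p_i})$ by construction; multiplying it by the $p_i$-element gives the lemma. Alternatively, within your own framework you could take $t\equiv 0\pmod{\mathrm{ord}(\gamma)}$ and $t\equiv 1\pmod{p_i}$, forcing $\beta=1$, which trivially satisfies both commutation requirements and suffices for the way the lemma is used in Proposition \ref{p_i-groups}; but that does not literally produce an element of order $2$, so to prove the statement as written you should route the $2$-part through Proposition \ref{2-groups}.
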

\begin{proof}
    The subgroup $\langle S_i, A_i \rangle$ is a Sylow $p_i$-subgroup of $\Hol(N)$. Thus any transitive subgroup $M\leq \Hol(N)$ must contain a subgroup conjugate to a subgroup of $\langle S_i, A_i \rangle$, and hence an element of the form $[\mathbf{v},S_i^aA_i^b]$ for some $0 \leq a,b \leq p_i-1$ and $\mathbf{v} \in P_{i-1}$. We note that we may choose an element such that $a>0$, as if there is no such element with $a>0$, then $M$ cannot be transitive.

    By Proposition \ref{2-groups}, $M$ contains an element of the form $(\beta,[\mathbf{v}',B_i^c])$, where $0 \leq c \leq 1$, $\mathbf{v}' \in P_{i-1}$, and $\beta \in \Hol(N)$ is an element of order $2$ which commutes with $\alpha_{i-2}$ and the factor $\Hol(C_{p_{i-1}} \rtimes C_{p_i})$ of $\Hol(N)$.

    The statement of the lemma is then the product of these two elements
\end{proof}
\begin{proposition}\label{p_i-groups}
    If $M \leq \Hol(N)$ is  a transitive subgroup such that $\alpha_{i-2} \in \pi(M)$ for some $i \in I$, then $M$ contains one of the following as a Sylow $p_{i-1}$-subgroup:
    \[\langle \alpha_{i-2},\e_1,\e_2 \rangle, \hspace{1cm} \langle \alpha_{i-2},\e_1 \rangle, \hspace{1cm} \langle \alpha_{i-2},\e_2 \rangle,\]
        \begin{align*}
            &\begin{rcases*}
                \langle \e_1,(\alpha_{i-2},\lambda\e_2) \rangle,&\\
                \langle (\alpha_{i-2},\lambda\e_2) \rangle,&
            \end{rcases*}
            &&\text{only if } \omega(M)=\langle S_i \rangle,\\
            &\begin{rcases*}
                \langle (\alpha_{i-2},\lambda\e_1),\e_2 \rangle,&\\
                \langle (\alpha_{i-2},\lambda\e_1) \rangle,&
            \end{rcases*}
            &&\text{only if } \omega(M)=\langle S_iA_i^{-1} \rangle.
        \end{align*}
        where $1 \leq \lambda \leq p_{i-1}-1$.
\end{proposition}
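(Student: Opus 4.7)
The plan is to classify the possible Sylow $p_{i-1}$-subgroups of $M$ by combining the decomposition
\[
\Hol(N) = \Hol(C_m) \times \prod_{j \in I} \Hol(C_{p_{j-1}} \rtimes C_{p_j})
\]
with Sylow theory and the classification of transitive subgroups of $\Hol(C_{p_{i-1}} \rtimes C_{p_i})$ from \cite{BML21}. First I would observe that, because $i \in I$ but $i-2 \notin I$, the only factors of $\Hol(N)$ contributing to the $p_{i-1}$-part of its order are $\langle \alpha_{i-2} \rangle \leq \Aut(C_m)$ and the unique Sylow $P_{i-1} \leq \Hol(C_{p_{i-1}} \rtimes C_{p_i})$; hence every Sylow $p_{i-1}$-subgroup of $\Hol(N)$ is conjugate to the elementary abelian group $\langle \alpha_{i-2}, \e_1, \e_2 \rangle$ of rank three. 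After conjugating $M$ inside $\Hol(N)$ (which only alters it up to isomorphism as a permutation group), I may assume that a chosen Sylow $p_{i-1}$-subgroup $Q$ of $M$ lies inside $\langle \alpha_{i-2}, \e_1, \e_2 \rangle$.

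The shape of $Q$ is then pinned down by two projections. Since $\alpha_{i-2} \in \pi(M)$, the image $\pi(Q)$ equals the Sylow $p_{i-1}$-part of $\pi(M)$, which is $\langle \alpha_{i-2} \rangle$. On the other hand, the image $\overline{Q}$ of $Q$ in $P_{i-1}$ is a Sylow $p_{i-1}$-subgroup of the transitive image $M_i \leq \Hol(C_{p_{i-1}} \rtimes C_{p_i})$ and so, by Lemma~4.5 of \cite{BML21}, equals one of $\langle \e_1 \rangle$, $\langle \e_2 \rangle$, or $P_{i-1}$. A direct case analysis on $|Q| \in \{p_{i-1}, p_{i-1}^2, p_{i-1}^3\}$ and on whether $\alpha_{i-2}$ itself belongs to $Q$ now produces the seven shapes listed in the proposition.

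The delicate part is the ``only if'' $\omega(M)$-constraints in the four conditional shapes. The key observation is that $M \cap P_{i-1} = Q \cap P_{i-1}$: as $P_{i-1}$ is normal in $\Hol(N)$, the subgroup $M \cap P_{i-1}$ is a normal $p_{i-1}$-subgroup of $M$ and hence lies inside every Sylow, which forces $M \cap P_{i-1}$ to be $\{1\}$, $\langle \e_1 \rangle$, or $\langle \e_2 \rangle$ in the four conditional cases. For any $g \in M$ with $\omega(g) = \rho$, I would compute the commutator of the generator $(\alpha_{i-2}, \lambda \e_j) \in Q$ with $g$ component by component: its $\Hol(C_m)$-component lies in $\langle \sigma_{i-2} \rangle$ (because $\Aut(C_m)$ is abelian and $\alpha_{i-2}$ fixes every cyclic factor of $C_m$ other than $\langle \sigma_{i-2} \rangle$), and its $i$-th component works out to $\lambda(\e_j - \rho(\e_j)) \in P_{i-1}$. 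Raising this commutator to the $p_{i-2}$-th power kills the $\Hol(C_m)$-component, since the Cunningham-chain relation $p_{i-2} = 2p_{i-1}+1$ gives $p_{i-2} \equiv 1 \pmod{p_{i-1}}$, and yields the element $\lambda(\e_j - \rho(\e_j))$ sitting inside $M \cap P_{i-1}$. Forcing this into the prescribed $M \cap P_{i-1}$ requires $\rho$ to fix $\e_j$ pointwise; writing $\rho = S_i^a A_i^b B_i^c$ and evaluating its action on $\e_j$, this excludes every $\rho$ outside $\langle S_i \rangle$ (when $j=2$) or outside $\langle S_iA_i^{-1} \rangle$ (when $j=1$), and transitivity then pins $\omega(M)$ down to exactly the corresponding cyclic group. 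The same commutator computation applied to a hypothetical generator $(\alpha_{i-2}, \mathbf{v})$ with $\mathbf{v}$ not along a coordinate axis eliminates the remaining ``generic'' subcases and forces $\mathbf{v}$ to be a scalar multiple of $\e_1$ or $\e_2$.

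The main obstacle will be executing this case analysis cleanly while tracking the interplay between the $\Hol(C_m)$- and $i$-th components under conjugation; the $p_{i-2}$-th power trick, where the Cunningham-chain relation enters in an essential way, is what makes the commutator computation produce a usable element of $M \cap P_{i-1}$ uniformly, regardless of how elements of $M$ mix the $\sigma_{i-2}$-direction with the rest.
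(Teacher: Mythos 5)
Your proposal is correct, and it reaches the same case analysis as the paper, but the mechanism you use to extract the $\omega(M)$-constraints is genuinely different. The paper first proves Lemma~\ref{coprime_elt}, producing a specific element $(\beta,[\mathbf{v},S_iA_i^aB_i^b])\in M$ with $\beta$ of order $2$ commuting with everything relevant, and then conjugates each candidate generator by that element, reading off the constraint from the requirement that the conjugate land back in the candidate Sylow subgroup. You instead take an \emph{arbitrary} $g\in M$ with $\omega(g)=\rho$, form the commutator with $(\alpha_{i-2},\lambda\e_j)$ --- whose only nontrivial components are $\alpha_{i-2}(\eta)\eta^{-1}\in\langle\sigma_{i-2}\rangle$ and $\lambda(\e_j-\rho(\e_j))\in P_{i-1}$ --- and raise it to the power $p_{i-2}$, using $p_{i-2}=2p_{i-1}+1\equiv 1\pmod{p_{i-1}}$ to kill the $\langle\sigma_{i-2}\rangle$-part while preserving the $P_{i-1}$-part; the resulting element $\lambda(\e_j-\rho(\e_j))$ is then trapped in $M\cap P_{i-1}$, which you correctly identify with $Q\cap P_{i-1}$ via normality of $P_{i-1}$ in $\Hol(N)$. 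Since $S_i,A_i,B_i$ are all diagonal, this forces $\rho(\e_j)=\e_j$ in each conditional case, and transitivity (an element with nonzero $S_i$-exponent must exist) pins $\omega(M)$ to exactly $\langle S_i\rangle$ or $\langle S_iA_i^{-1}\rangle$; the same computation kills the off-axis generators. What your route buys is independence from Lemma~\ref{coprime_elt} and from the precise form of a Sylow $2$-complement, and it yields the constraint simultaneously for every element of $\omega(M)$ rather than for one well-chosen representative; what it costs is the extra normality/power bookkeeping and an initial conjugation of $M$ (which, as in the paper's own Corollary~\ref{2-isoms}, only matters up to permutation-group isomorphism, and is in fact avoidable here since the element $(\alpha_{i-2},\mathbf{v})$ you need already lies in $\langle\alpha_{i-2},\e_1,\e_2\rangle$ on the nose). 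The enumeration of the seven shapes by $|Q|$ and the two projections is stated rather than carried out, but the hard content --- the conditional $\omega(M)$ statements --- is handled completely and correctly.
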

\begin{proof}
    An arbitrary subgroup of $\langle \alpha_{i-2},\e_1,\e_2 \rangle$ has the form
    \begin{equation}\label{p_i-subgroup}
        \langle (\alpha_{i-2}^{\nu_1},\lambda_1\e_1+\mu_1\e_2),(\alpha_{i-2}^{\nu_2},\mu_2\e_2),\alpha_{i-2}^{\nu_3} \rangle.
    \end{equation}
    We may assume that $\nu_1,\nu_2,\nu_3 \in \left\{0,1\right\}$, not all $0$ as $\alpha_{i-2} \in M$. We have that $0 \leq \lambda_1,\mu_1,\mu_2 \leq p_{i-1}-1$, but we assume that $\lambda_1,\mu_1,\mu_2$ are not all $0$ as $M$ is transitive.
	
    First, let $\nu_3=1$; then we may assume that $\nu_1=\nu_2=0$, so (\ref{p_i-subgroup}) is either $\langle \alpha_{i-2},\e_1,\e_2 \rangle$, $\langle \alpha_{i-2},\e_1 \rangle$, $\langle \alpha_{i-2},\e_2 \rangle$, or $\langle \alpha_{i-2},\lambda\e_1+\e_2 \rangle$. In the latter case, by Lemma \ref{coprime_elt}, $M$ contains an element of the form $(\beta,[\mathbf{v},SA_i^aB_i^b])$, where $0 \leq a \leq p_i-1$, $0 \leq b \leq 1$, $\mathbf{v} \in P_{i-1}$, and $\beta \in \Hol(N)$ an element of order $2$ commuting with $\alpha_{i-2}$ and the factor $\Hol(C_{p_{i-1}} \rtimes C_{p_i})$ of $\Hol(N)$. Therefore $M$ must also contain
    \[(\beta,[\mathbf{v},S_iA_i^aB_i^b])(1,[\lambda\e_1+\e_2,1])(\beta,[\mathbf{v},S_iA_i^aB_i^b])^{-1}=(-1)^bk_i^a(k_i\lambda\e_1+\e_2),\]
    which is possible if and only if $\lambda=0$.
	
    Now suppose $\nu_3=0, \nu_2=1$. Thus we may take $\nu_1=0$, so (\ref{p_i-subgroup}) is either $\langle\lambda\e_1+\e_2,(\alpha_{i-2},\mu\e_2)\rangle$, $\langle\e_1,(\alpha_{i-2},\mu\e_2)\rangle$, $\langle(\alpha_{i-2},\mu\e_2)\rangle$, or $\langle\e_2,\alpha_{i-2}\rangle$. For the first group, we may again argue that $\lambda=0$ for similar reasons to above. For the second and third groups, $M$ must contain
    \[(\beta,[\mathbf{v},S_iA_i^aB_i^b])(\alpha_{i-2},\mu\e_2)(\beta,[\mathbf{v},S_iA_i^aB_i^b])^{-1}=(\alpha_{i-2},(-1)^bk_i^a\mu\e_2),\]
    which is possible either if $\mu=0$, or $a=b=0$ (that is $\omega(M)=\langle S_i \rangle$).
	
    Finally, suppose $\nu_1\neq 0$ with $\nu_2=\nu_3=0$. Then (\ref{p_i-subgroup}) is either
    \[\langle (\alpha_{i-2},\lambda\e_1+\mu\e_2)\rangle  \text{ or } \langle (\alpha_{i-2},\lambda\e_1),\e_2\rangle.\]
    For the first group, $M$ contains
    \[(\beta,[\mathbf{v},S_iA_i^aB_i^b])(\alpha_{i-2},\lambda\e_1+\mu\e_2)(\beta,[\mathbf{v},S_iA_i^aB_i^b])^{-1}=(\alpha_{i-2},(-1)^bk_i^a(k_i\lambda\e_1+\mu\e_2)),\]
    which is possible if and only if $\mu=0$, $b=0$ and $a=-1$, or $\lambda=0$ and either $\mu=0$ or $a=b=0$ (that is $\omega(M)=\langle S_i \rangle$). For the second group, $M$ must contain
    \[(\beta,[\mathbf{v},S_iA_i^aB_i^b])(\alpha_{i-2},\lambda\e_1)(\beta,[\mathbf{v},S_iA_i^aB_i^b])^{-1}=(\alpha_{i-2},(-1)^bk_i^a\lambda\e_1),\]
    which is possible if $\lambda=0$ or $b=0$ and $a=-1$ (that is $\omega(M)=\langle S_iA_i^{-1} \rangle$).
\end{proof}
Combining Propositions \ref{p_i-groups} and \ref{2-groups} allows us to find all transitive subgroups $M \leq \Hol(N)$. We now need to understand when two transitive subgroups $M,M' \leq \Hol(N)$ are isomorphic as permutation groups.
\begin{proposition}\label{direct_isoms}
    Suppose that we have transitive subgroups $M_0,M_0' \leq \Hol(C_m)$ and for each $i \in I$, $M_i,M_i' \leq \Hol(C_{p_{i-1}} \rtimes C_{p_i})$ so that
    \begin{align*}
        &M:=M_0 \times \prod_{i \in I}M_i,\\
        &M':=M_0' \times \prod_{i \in I}M_i'
    \end{align*}
    are both transitive subgroups of $\Hol(N)$. Then $M$ and $M'$ are isomorphic as permutation groups if and only if $M_j$ and $M_j'$ are isomorphic as permutation groups for each $j \in I \cup \{0\}$.
\end{proposition}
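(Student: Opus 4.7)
For the ($\Leftarrow$) direction, given permutation isomorphisms $\phi_j\colon M_j\to M_j'$ for each $j\in I\cup\{0\}$, the product map $\phi:=\prod_j\phi_j$ is an abstract isomorphism $M\to M'$. Since the stabiliser of the identity in a direct product of transitive groups decomposes as $\mathrm{Stab}_M(1_N)=\mathrm{Stab}_{M_0}(1)\times\prod_{i\in I}\mathrm{Stab}_{M_i}(1)$ (and likewise for $M'$), $\phi$ sends $\mathrm{Stab}_M(1_N)$ to $\mathrm{Stab}_{M'}(1_N)$ and is therefore a permutation isomorphism.

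For the ($\Rightarrow$) direction, suppose $\phi\colon M\to M'$ is a permutation isomorphism. The plan has two steps. First, I will show that $\phi(M_i)=M_i'$ for each $i\in I$. The key observation is that $p_i$ divides $|M_i|$ but none of $|M_j|$ for $j\in(I\cup\{0\})\setminus\{i\}$: a contribution of $p_i$ to $|\Aut(C_m)|$ would require $\alpha_{i-1}$ of order $p_i$ to lie in $\Aut(C_m)$, hence $p_{i-1}\mid m$, which is impossible for $i\in I$; and for $i'\in I\setminus\{i\}$ the non-consecutivity of $I$ forbids $p_i\in\{p_{i'-1},p_{i'}\}$. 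Hence every Sylow $p_i$-subgroup of $M$ lies in $M_i$, and is carried by $\phi$ into $M_i'$. Moreover, $M_i$ contains the regular normal non-abelian subgroup $C_{p_{i-1}}\rtimes C_{p_i}$; because elements of distinct direct factors of $M$ commute, any non-abelian subgroup of $M$ of order $p_{i-1}p_i$ must lie entirely in $M_i$ (the $p_i$-part is in $M_i$ by the Sylow remark, and if its $p_{i-1}$-part lay in $M_0$ it would commute with the $p_i$-part, contradicting non-abelianness). These two properties give an intrinsic description of $M_i$ within $M$ — generated by its Sylow $p_i$-subgroups together with all non-abelian subgroups of $M$ of order $p_{i-1}p_i$, enlarged by their normalisers in $M$ — which is preserved by $\phi$, so $\phi(M_i)=M_i'$. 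The restriction $\phi|_{M_i}$ is then a permutation isomorphism because $\mathrm{Stab}_{M_i}(1)=M_i\cap\mathrm{Stab}_M(1_N)$.

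The second step handles $M_0$ by a quotient argument. From the first step $\phi(\prod_{i\in I}M_i)=\prod_{i\in I}M_i'$, so $\phi$ descends to an abstract isomorphism $\bar\phi\colon M/\prod_{i\in I}M_i\to M'/\prod_{i\in I}M_i'$; the canonical identifications of these quotients with $M_0$ and $M_0'$ respectively convert $\bar\phi$ into an isomorphism $M_0\to M_0'$. Using the decomposition $\mathrm{Stab}_M(1_N)=\mathrm{Stab}_{M_0}(1)\times\prod_{i\in I}\mathrm{Stab}_{M_i}(1)$ together with $\phi(\mathrm{Stab}_M(1_N))=\mathrm{Stab}_{M'}(1_N)$, one checks that $\bar\phi(\mathrm{Stab}_{M_0}(1))=\mathrm{Stab}_{M_0'}(1)$, so $M_0$ and $M_0'$ are permutation isomorphic. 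The hardest part of the argument will be the intrinsic identification of each $M_i$ within $M$: the primes $2$ and $p_{i-1}$ (for $i\in I$ with $i-2\notin I$) appear in more than one direct factor, preventing a clean Hall $\pi$-subgroup argument. The resolution combines the uniqueness of $p_i$ to the factor $M_i$ with the distinctively non-abelian structure of $C_{p_{i-1}}\rtimes C_{p_i}$ and the commuting of elements across distinct direct factors.
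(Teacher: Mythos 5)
Your ($\Leftarrow$) direction, the reduction of the problem to showing that $\phi$ respects the direct factors, and the closing quotient argument for $M_0$ are all fine, and your opening observation --- that $p_i$ divides $|M_i|$ but divides no other factor of $M$, so every Sylow $p_i$-subgroup of $M$ lies in $M_i$ and is carried by $\phi$ into $M_i'$ --- is correct. The gap is in the step you yourself flag as the hardest: your ``intrinsic description'' of $M_i$ does not actually characterise $M_i$. On one side it undershoots: the subgroup generated by the Sylow $p_i$-subgroups of $M$ together with the non-abelian subgroups of order $p_{i-1}p_i$ is in general a proper subgroup of $M_i$. For instance, if $M_i=P_{i-1}\rtimes\langle S_i,B_i\rangle$, then every element of order $p_i$ lies in $\langle \e_1,S_i\rangle$ and the only non-abelian subgroup of order $p_{i-1}p_i$ is $\langle \e_1,S_i\rangle$ itself, so your generating set misses $\e_2$ and all of the involutions $[\mathbf{v},B_i]$. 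On the other side, the proposed repair overshoots: because distinct direct factors of $M$ commute elementwise, the normaliser in $M$ of \emph{any} subgroup of $M_i$ contains the entire complementary factor $M_0\times\prod_{j\neq i}M_j$, so ``enlarging by normalisers in $M$'' returns essentially all of $M$ rather than $M_i$.

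The elements your description cannot place are precisely the dangerous ones: involutions and elements of order $p_{i-1}$ occur both in $M_i$ (as $[\mu\e_j,B_i]$ and as elements of $P_{i-1}$) and possibly in $M_0$ (as the $\beta_j$ and, when $i-2\notin I$, as $\alpha_{i-2}$), so an abstract isomorphism could a priori exchange them, and no Sylow or Hall argument rules this out. The paper closes exactly this gap by comparing conjugation actions inside the regular normal subgroup: an involution of $M_0\cap\Aut(C_m)$ inverts an element of $C_m$, of order coprime to $p_{i-1}$, whereas $[\mu\e_j,B_i]$ acts on elements of order $p_{i-1}$; and an element of $\Aut(C_m)$ of order $p_{i-1}$ acts on an element of $C_m$ whose order does not even divide $|M_i'|$. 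Some argument of this kind (or a genuinely correct intrinsic characterisation of $M_i$ inside $M$) is needed before you can conclude $\phi(M_i)=M_i'$; as written, the proof is incomplete at its central step.
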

\begin{proof}
    We first assume that $|I|=1$, so $M=M_0 \times M_1$ and $M'=M_0' \times M_1'$ with $M_1,M_1' \leq \Hol(C_p \rtimes C_q)$ for some primes $p,q$ with $p=2q+1$. It is clear that if $M_0 \cong M_0'$ and $M_1 \cong M_1'$ as permutation groups, then $M \cong M'$ as permutation groups. Suppose then that we have an isomorphism $\phi: M \to M'$ of permutation groups, but that $\phi(M_0) \neq M_0'$ and $\phi(M_1) \neq M_1'$. Then $\phi$ either sends an element $\alpha \in M_0 \cap \Aut(C_m)$ of order $p$ to an element in $\langle \e_1,\e_2 \rangle \subseteq M_1'$ or an element $\beta \in M_0 \cap \Aut(C_m)$ of order $2$ to an element $[\mu\e_j,B]$ of order $2$ in  $M_1'$ for some $i \in \{1,2\}$ and $0 \leq \mu \leq p-1$. In the first case, $\alpha$ acts on an element $\sigma \in C_m$ of order $r:=(p-1)/2$, but neither $\e_1$ nor $\e_2$ act on an element of order $r$ (in fact there is no element of order $r$ in $M_1'$), so $\phi(\alpha)$ must lie in $M_0'$. The second case is similar; indeed $\beta$ acts on an element of order coprime to $p$, but $[\mu\e_j,B]$ acts on an element of order $p$, so $\phi(\beta) \in M_0'$. We conclude that $\phi(M_0) \subseteq M_0'$, and therefore $\phi(M_0) = M_0'$, meaning we must also have $\phi(M_1)=M_1'$.

    Finally, we note that this argument can be extended for $|I|>1$. For example, let $M=M_0 \times M_1 \times M_2$ and $M'=M_0' \times M_1' \times M_2'$. If there is an isomorphism $\phi:M \to M'$, then $\phi(M_0) \cap M_1'=\phi(M_0) \cap M_2' = \{1\}$ by the same logic as above (and thus $\phi(M_0)=M_0'$). Further, $\gcd(|M_1|,|M_2'|)=\gcd(|M_1'|,|M_2|) \leq 2$, but an element of order $2$ in $M_1$ acts on an element of order, say, $p$, and an element of order $2$ in $M_2'$ acts on an element of order coprime with $p$. We must therefore have $\phi(M_1)=M_1'$ and $\phi(M_2)=M_2'$.
\end{proof}
We remark that the argument used in Proposition \ref{direct_isoms} also shows that if $M,M' \leq \Hol(N)$ are transitive subgroups such that $M$ is a direct product of transitive subgroups as described in the proposition, but $M'$ contains elements of the form $(\alpha,\mu\e_j)$ or $(\beta,[\mu\e_j,B])$, then $M$ cannot be isomorphic to $M'$, even as abstract groups. Therefore, we are left to determine when two transitive subgroups of the form described for $M'$ are isomorphic as permutation groups.
\begin{proposition}\label{indirect_isoms}
    Let $M_1,M_2 \leq \Hol(N)$ be transitive subgroups differing only by the fact that $M_1$ contains the generator $(\alpha_{i-2}^{\nu},\e_j)$ and $M_2$ instead contains the generator $(\alpha_{i-2},\e_j)^{\nu}$ for some $j \in \{1,2\}$ and $1 \leq \nu \leq p_{i-1}-1$. Define the map $\varphi_1:M_1 \to M_2$ by
    \[\varphi_1((\alpha_{i-2}^{\nu},\e_j))=(\alpha_{i-2},\e_j)^{\nu}\]
    and acting as identity on the other generators. Then $\varphi_1$ gives an isomorphism of permutation groups.\\

    Let $M_3,M_4 \leq \Hol(N)$ be transitive subgroups differing only by the fact that $M_3$ contains the generators $(\alpha_{i-2}^{\nu},\e_1)$, $[\mu\e_2,S_iA_i^{-1}]$ and $\e_2$, and $M_4$ instead contains the generators $(\alpha_{i-2},\e_2)^{\nu}$, $[\mu\e_1,S_i]$ and $\e_1$, for some $1 \leq \nu \leq p_{i-1}-1$ and $0 \leq \mu \leq p_{i-1}-1$. Define the map $\varphi_2:M_3 \to M_4$ by
    \[\varphi_2((\alpha_{i-2}^{\nu},\e_1))=(\alpha_{i-2},\e_2)^{\nu}, \;\; \varphi_2([\mu\e_2,S_iA_i^{-1}])=[\mu\e_1,S_i], \;\; \varphi_2(\e_2)=\e_1,\]
     and acting as identity on the other generators. Then $\varphi_2$ (and hence also $\varphi_2^{-1}$) gives an isomorphism of permutation groups.\\
    
    Other than the isomorphisms already described, there are no further isomorphisms between the transitive subgroups of $\Hol(N)$, even as abstract groups.
\end{proposition}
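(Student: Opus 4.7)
The plan is to exhibit each isomorphism explicitly via a conjugating element of $\mathrm{Perm}(N)$ that fixes $1_N$, and then to establish the uniqueness claim by a Sylow-theoretic argument modelled on Proposition \ref{direct_isoms}.

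For $\varphi_1$, observe that $-1 \notin \langle k_i\rangle \leq (\mathbb{Z}/p_{i-1}\mathbb{Z})^\times$ (since $\langle k_i\rangle$ has odd order $p_i$), so $\langle -1, k_i\rangle = (\mathbb{Z}/p_{i-1}\mathbb{Z})^\times$ and one can choose $m,r$ with $(-1)^r k_i^m \equiv \nu \pmod{p_{i-1}}$. Set $g_1 := \phi_i^m \psi_i^r \in \Aut(C_{p_{i-1}} \rtimes C_{p_i}) \leq \Aut(N)$. Under the matrix identification recorded before Proposition \ref{2-groups}, $\phi_i$ and $\psi_i$ act on $P_{i-1}$ as the scalars $k_i I$ and $-I$, so conjugation by $g_1$ acts on $P_{i-1}$ as the scalar $\nu I$; it also commutes with $\alpha_{i-2}$ (a different direct factor of $\Aut(N)$) and with $\sigma_i, \phi_i, \psi_i$. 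Hence
\[
g_1 (\alpha_{i-2}^{\nu}, \e_j) g_1^{-1} = (\alpha_{i-2}^{\nu}, \nu\e_j) = (\alpha_{i-2}, \e_j)^{\nu},
\]
while every other generator of $M_1$ coming from Propositions \ref{2-groups} and \ref{p_i-groups} is either fixed or rescaled within $P_{i-1}$ by $\nu$, which preserves the cyclic subgroup it generates. Since $g_1 \in \Aut(N) = \mathrm{Stab}_{\Hol(N)}(1_N)$, conjugation by $g_1$ is a permutation-group isomorphism $M_1 \to M_2$ whose action on the distinguished generator matches $\varphi_1$.

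For $\varphi_2$, the key observation is that $S_i = \mathrm{diag}(k_i, 1)$ and $S_iA_i^{-1} = \mathrm{diag}(1, k_i^{-1})$ are interchanged (up to the inversion on $k_i$) by the coordinate swap $\tau:\e_1 \leftrightarrow \e_2$, and this swap is precisely what converts the "$\omega(M)=\langle S_iA_i^{-1}\rangle$" Sylow $p_{i-1}$-subgroup of $M_3$ into the "$\omega(M)=\langle S_i\rangle$" Sylow $p_{i-1}$-subgroup of $M_4$. The plan is to realise $\varphi_2$ as conjugation by a bijection $f \in \mathrm{Perm}(N)$ that effects $\tau$ on the $P_{i-1}$-component and acts trivially on the other direct factors of $N$, composed with a $\phi_i^m\psi_i^r$-type correction of the $\varphi_1$-flavour to absorb the $\nu$ and the $k_i^{-1}$. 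The verification then proceeds generator by generator exactly as for $\varphi_1$, with the added bookkeeping that under $f$ the Sylow $p_i$-generator $[\mu\e_2,S_iA_i^{-1}]$ of $M_3$ is sent to $[\mu\e_1,S_i]$ of $M_4$; that $f$ fixes $1_N$ suffices to upgrade the abstract bijection to a permutation-group isomorphism.

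For the absence of further isomorphisms, I would adapt the argument in Proposition \ref{direct_isoms}. Given any abstract isomorphism $\phi:M \to M'$ between two of the transitive subgroups enumerated in Section \ref{metab-cunningham}, Sylow theory together with the decomposition $\Hol(N) = \Hol(C_m) \times \prod_{i \in I}\Hol(C_{p_{i-1}} \rtimes C_{p_i})$ forces $\phi$ to respect the per-factor structure up to the $p_{i-1}$-torsion that can sit in either $\Hol(C_m)$ or $\Hol(C_{p_{i-1}} \rtimes C_{p_i})$ (precisely when $i-2 \notin I$, so that $\alpha_{i-2} \in \pi(M)$). Consulting Proposition \ref{p_i-groups}, the only remaining flexibility in how $\phi$ can reshuffle the parameters $\lambda$ (and, in the $\omega$-dependent cases, $\mu$) is exactly that captured by $\varphi_1$ and $\varphi_2$; any other identification would contradict Proposition \ref{direct_isoms} applied to the factors on which $\phi$ must act trivially. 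The main obstacle lies here: one has to exhaustively pair each of the seven Sylow $p_{i-1}$-types of Proposition \ref{p_i-groups} against every compatible Sylow $2$- and Sylow $p_i$-configuration from Proposition \ref{2-groups} and Lemma \ref{coprime_elt}, and rule out any accidental isomorphism produced by an outer symmetry inside $\mathrm{GL}_2(\mathbb{F}_{p_{i-1}})$ that is not already encoded in $\varphi_1$ or $\varphi_2$.
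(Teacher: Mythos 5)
Your realisation of $\varphi_1$ as conjugation by the scalar $g_1=\phi_i^m\psi_i^r$ does not work as stated. Conjugation by $g_1$ rescales the $P_{i-1}$-component of \emph{every} generator, and your claim that this ``preserves the cyclic subgroup it generates'' is false for mixed generators: if $[\mathbf{v},X]\in M_1$ with $X\neq I$ and $\mathbf{v}\neq 0$, then $[\nu\mathbf{v},X]\,[\mathbf{v},X]^{-1}=[(\nu-1)\mathbf{v},I]$, which lies in $M_1$ only when $M_1$ already contains the relevant line of $P_{i-1}$. For instance, in the configuration $\langle(\alpha_{i-2}^{\nu},\mu\e_2),[\lambda\e_1,S_i]\rangle$ (which arises from Proposition \ref{p_i-groups} combined with the order-$pq$ rows of Table \ref{metab-trans-subgroups}, where $M\cap\langle\e_1\rangle=1$), one has $g_1[\lambda\e_1,S_i]g_1^{-1}=[\nu\lambda\e_1,S_i]\notin M_2$. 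So $g_1M_1g_1^{-1}$ is a conjugate of $M_1$ but is generally \emph{not} $M_2$, and the isomorphism $M_1\cong M_2$ is not established. The statement of the proposition defines $\varphi_1$ as fixing the other generators, so what must be checked is that this generator-level assignment respects all relations (the point being that $(\alpha_{i-2}^{\nu},\e_j)$ and $(\alpha_{i-2}^{\nu},\nu\e_j)$ have the same order and transform identically under conjugation by the remaining generators, since the $R_i$-action on $P_{i-1}$ is linear); this is the verification the paper treats as ``clear''. The same criticism applies to your sketch of $\varphi_2$: the coordinate swap $\tau$ is not an element of $\Hol(N)$, you never exhibit the permutation $f$ concretely, and you never verify that conjugation by it carries $M_3$ onto $M_4$ rather than onto some other conjugate.

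The more serious gap is the final claim, which is the real content of the proposition: you explicitly defer it (``the main obstacle lies here'') to an exhaustive pairing of the Sylow $p_{i-1}$-types against all Sylow $2$- and Sylow $p_i$-configurations, which you do not carry out, and it is not clear that such a case analysis would terminate cleanly. The paper avoids this entirely: by Corollary \ref{2-isoms} one may assume both groups meet the standard Sylow $2$-subgroup $B$ of (\ref{B}), and then the characteristic abelian subgroup $N_I\leq C_m$ together with the conjugation argument of Proposition \ref{cyclic_isoms_cunningham} (the computation $\phi C_\alpha=C_{\phi(\alpha)}\phi$ inside the abelian group $\Aut(N_I)$) forces any abstract isomorphism to match up the $2$-subgroups $Y\leq M\cap\Aut(N)$ exactly, ruling out all further identifications. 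You would need to supply either that argument or a complete case analysis for the proof to stand.
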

\begin{proof}
    It is clear that the maps $\varphi_1$ and $\varphi_2$ give isomorphisms of permutation groups which restrict to isomorphisms of any two subgroups of the form $\langle (\alpha_{i-2}^{\nu},\e_j) \rangle$. By Corollary \ref{2-isoms}, without loss of generality, we now only need to show that if $M,M' \leq \Hol(N)$ are transitive subgroups such that $M$ and $M'$ contain $2$-subgroups of $B$, as given by (\ref{B}), of the same order, but there is a $2$-subgroup $Y \leq M \cap H$ such that $Y \not\subseteq M'$, then $M$ and $M'$ are not isomorphic as permutation groups. To this end, assuming $M \cong M'$, we observe that $Y \leq \Aut(N)$ with $Y$ abelian and both $M$ and $M'$ must contain a characteristic abelian subgroup $N_I \leq C_m$. We may then apply the method used in the proof of Proposition \ref{cyclic_isoms_cunningham} to show that $M'$ must contain $Y$, which contradicts our assumption and finishes the proof.
\end{proof}
The next proposition gives us a way of computing $|\Aut(M,M')|$ for each transitive subgroup $M \leq \Hol(N)$.
\begin{proposition}\label{metab_cunningham_isoms}
    Let $M \leq Hol(N)$ be a transitive subgroup, and let $M_X$ denote the projection of $M$ to the factor $X$ of $\Hol(N)$, where $X$ is one of $\Hol(C_{m_j})$ (with each $m_j$ a Cunningham product such that $\prod m_j =m$) or $\Hol(C_{p_{i-1}} \rtimes C_{p_i})$ for some $i \in I$. Let $\mathcal{X}:=\{X \mid X \text{ is a factor of }\Hol(N)\}$. Then
    \[|\Aut(M,M')|=\prod_{X \in \mathcal{X}}|\Aut(M_X,M_X')|.\]
\end{proposition}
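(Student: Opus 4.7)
The plan is to establish a natural isomorphism
\[\Psi: \Aut(M,M') \longrightarrow \prod_{X \in \mathcal{X}} \Aut(M_X,M_X')\]
sending $\theta$ to the tuple of induced automorphisms on each projection. First I would observe that the decomposition in the first bullet of Remark \ref{cunningham_metacyclic_observations} realises $\Hol(N)$ as the internal direct product of its factors $X \in \mathcal{X}$, so the canonical projections yield an isomorphism $\Hol(N) \cong \prod_X X$. Restricting to $M$ gives an injection $\iota: M \hookrightarrow \prod_X M_X$. Under this identification $1_N$ corresponds to $(1_X)_X$, so $M_X' = \pi_X(M') = M_X \cap \mathrm{Stab}_X(1_X)$ is the stabiliser of $1_X$ in $M_X$.

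Second, I would show that any $\theta \in \Aut(M,M')$ descends to a well-defined automorphism $\theta_X$ of each $M_X$ lying in $\Aut(M_X,M_X')$. Here I would invoke the order-of-action argument used in the proofs of Propositions \ref{direct_isoms} and \ref{indirect_isoms}: elements of distinct factors of $\Hol(N)$ act nontrivially on disjoint subsets of generators $\sigma_j$ of $N$, and those subsets are distinguished by the orders of their elements. Consequently $\theta$ must preserve the contribution of each factor, so $\pi_X \circ \theta$ factors through $\pi_X$ to give $\theta_X \in \Aut(M_X)$; the hypothesis $\theta(M') = M'$ together with the description of $M_X'$ above then yields $\theta_X(M_X') = M_X'$. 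Injectivity of $\Psi$ is immediate from $\iota$: if each $\theta_X$ is trivial, then $\theta$ is trivial on every projection of each $m \in M$, and hence $\theta = \mathrm{id}_M$.

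I expect the main obstacle to be surjectivity. Given a tuple $(\theta_X)_X$ with each $\theta_X \in \Aut(M_X,M_X')$, one would like to define $\theta(m) := \iota^{-1}\bigl((\theta_X(\pi_X(m)))_X\bigr)$, but this requires verifying that $(\theta_X(\pi_X(m)))_X \in \iota(M)$ for all $m \in M$. When $M$ splits as a direct product of its projections (as in Proposition \ref{direct_isoms}) this is automatic. For the twisted transitive subgroups of Proposition \ref{p_i-groups} --- those containing elements of the form $(\alpha_{i-2}^{\nu},\e_j)$ or more elaborate variants such as those featured in Proposition \ref{indirect_isoms} --- one must check that each $\theta_X$ acts on the twisting parameters $\nu,\lambda,\mu$ compatibly so that the constructed image lies in $M$. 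The plan is to handle this case-by-case using Proposition \ref{rel-aut-char} together with the explicit descriptions of $\Aut(M_X,M_X')$ arising from Proposition \ref{cyclic_HGS_Cunningham} on the cyclic factors and from the analysis of $\Hol(C_{p_{i-1}} \rtimes C_{p_i})$ in \cite{BML21} on the metacyclic factors; the key observation is that the parameters controlling the twisting on different factors are independent, so the compatibility constraints decouple.
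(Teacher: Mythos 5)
Your overall strategy --- reducing $\Aut(M,M')$ to a tuple of automorphisms of the projections $M_X$ --- is in the same spirit as the paper's, and your descent and injectivity steps are sound: they rest on the same order-of-action arguments that underlie Propositions \ref{direct_isoms} and \ref{indirect_isoms}. The paper, however, obtains the statement in two lines by quoting Proposition \ref{indirect_isoms} wholesale: every isomorphism between transitive subgroups is a composite of factor-wise isomorphisms and the exceptional maps $\varphi_1,\varphi_2$, and since $\varphi_1(M),\varphi_2(M)\neq M$ the latter cannot occur inside $\Aut(M,M')$, so every permutation-group automorphism of $M$ is assembled from the per-factor automorphisms already catalogued in Section \ref{cyclic_cunningham} and \cite{BML21}.

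The genuine gap in your plan is the surjectivity step, which you correctly isolate but do not close. The assertion that ``the parameters controlling the twisting on different factors are independent, so the compatibility constraints decouple'' is precisely the statement that needs proof, not a justification of it. If $M$ is one of the subdirect products arising from Proposition \ref{p_i-groups}, containing say $(\alpha_{i-2},\lambda\e_2)$ but neither $\alpha_{i-2}$ nor $\lambda\e_2$ separately, then a tuple $(\theta_X)_X$ defines a map into $M$ only if $\bigl(\theta_0(\alpha_{i-2}),\theta_i(\lambda\e_2)\bigr)$ again lies in $M$; this is a congruence linking the action of $\theta_0$ on $\alpha_{i-2}$ to the action of $\theta_i$ on $\e_2$, and it is nontrivial exactly because $M\cap\ker\pi_0$ and $M\cap\ker\pi_i$ omit the elements one would need to absorb a mismatch. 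Until you verify --- using the explicit description of $\Aut(M_0,M_0')$ from Proposition \ref{rel-aut-char} (its elements centralise $\alpha_{i-2}$ because $\Aut(N_I)$ is abelian) together with the description of $\Aut(M_i,M_i')$ in \cite{BML21} --- that every pair satisfies this congruence, the claimed equality could in principle fail, with the left-hand side a proper subgroup of the product. That verification is the real content of the proposition for the twisted subgroups, and your proposal defers it rather than carrying it out; the paper avoids it by leaning on the ``no further isomorphisms, even as abstract groups'' clause of Proposition \ref{indirect_isoms}.
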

\begin{proof}
    Let $\phi:M \to M$ be an isomorphism of permutation groups. Then $\phi$ is a composition of isomorphisms of each factor $X \in \mathcal{X}$ and isomorphisms of the form $\varphi_1$ or $\varphi_2$ described in Proposition \ref{indirect_isoms}. However, we note that $\varphi_1(M),\varphi_2(M) \neq M$, so $\phi$ must just be composed of isomorphisms on each factor.
\end{proof}
We now summarise the results of this section, as a tool for computing and counting the Hopf--Galois structures of type $N$.

How the transitive subgroups of $\Hol(N)$ are constructed:
\begin{enumerate}[label=$\bullet$]
    \item Remark \ref{cunningham_metacyclic_observations} tells us that a transitive subgroup of $\Hol(N)$ must also restrict to a transitive subgroup of $\Hol(C_m)$ and a transitive subgroup of $\Hol(C_{p_{i-1}} \rtimes C_{p_i})$ for each $i \in I$.
    \item Section \ref{cyclic_cunningham} tells us what the transitive subgroups of $\Hol(C_m)$ look like (note that we may have to split into several factors $\Hol(C_{m_j})$ so that each $m_j$ is a Cunningham product).
    \item Section 4.2 of \cite{BML21} tells us what the transitive subgroups of $\Hol(C_{p_{i-1}} \rtimes C_{p_i})$ look like.
    \item Propositions \ref{2-groups} and \ref{p_i-groups} tell us how to glue the groups above to create all transitive subgroups $M$ of $\Hol(N)$.
\end{enumerate}
How to sort the transitive subgroups into isomorphism classes and count the numbers of Hopf--Galois structures admitted:
\begin{enumerate}[label=$\bullet$]
    \item Propositions \ref{direct_isoms} and \ref{indirect_isoms} tell us when two transitive subgroups $M_1,M_2 \leq \Hol(N)$ are isomorphic as permutation groups, given the isomorphisms on each factor of $\Hol(N)$. To explicitly understand the isomorphisms on the $\Hol(C_m)$ component, we refer to Section \ref{cyclic_cunningham}. We refer to Section 4.2 of \cite{BML21} to understand the isomorphisms on the factors $\Hol(C_{p_{i-1}} \rtimes C_{p_i})$.
    \item Proposition \ref{metab_cunningham_isoms} tells us how to compute $|\Aut(M,M')|$ given how $M$ restricts to each component of $\Hol(N)$, using Section \ref{cyclic_cunningham} and Table \ref{metab-trans-HGS}, which can also be found in Section 4.2 of \cite{BML21}.
    \item Proposition \ref{cunningham_2gp_count} as well as Section \ref{cyclic_cunningham} and Section 4.2 of \cite{BML21} help towards counting how many groups are in the same isomorphism class.
    \item The above results are combined with Lemma \ref{Byott_num_HGS} to obtain the number of Hopf--Galois structures of type $N$ with transitive permutation group $M$.
\end{enumerate}
We finish this paper by illustrating the above with some examples.

\section{Examples}
In this section, we apply the results of the previous section to separable extensions of degree $n$ where $n$ is a Cunningham product of length three or four. We fully analyse the situation $n=p_1p_2p_3$ and the cyclic case of $n=p_1p_2p_3p_4$. The four metacyclic cases in the length four setting follow very similarly to length three setting.
\begin{remark}
    Of course the number of types of Hopf--Galois structures grows proportionally to $\phi^l$, where $\phi=(1+\sqrt{5})/2$ and $l$ is the length of the associated Cunningham chain; that is it follows the growth of the Fibonacci sequence.
    
    A general comment on the number of (and even isomorphism classes of) transitive subgroups of $\Hol(N)$ for some $N$ of size $n=p_1\cdots p_l$ is markedly difficult to give. This is because the count is heavily dependent on the number of metacyclic factors in the decomposition of $N$, and where they are placed. Where there are no metacyclic factors, Theorem \ref{cyclic_cunningham} gives an explicit count of isomorphism classes, but here the formula is already rather complicated (although again it is clear that we have some dependence on $F(l)$, that is, the number of choices for $I$). Where (non-trivial) metacyclic factors are introduced, the results displayed in Theorems \ref{N2_tot} and \ref{N3_tot} demonstrate that even the position of a metacyclic factor can greatly affect the count. This is because it matters which factor has order divisible by $p_l$, and whether there is an odd prime dividing both $\Hol(A)$ and $\Hol(B)$ for some $A,B \leq N$, where $A$ is a cyclic factor of $N$ and $B$ is a metacyclic factor of $N$.
\end{remark}
We start by including two tables of results from Section 4.2 of \cite{BML21} which are used in some of the computations below. These explicitly describe and summarise Section \ref{metab-cunningham} for the case $n=p_1p_2$. For ease of notation, we let $p:=p_1$, $q:=p_2$, $P:=P_1:=\langle \e_1,\e_2 \rangle$, $R:=R_2=\langle S,A,B \rangle$, and $k:=k_2$. We note that the expression $\mathbb{F}_p^2 \rtimes_u C_q$ denotes a unique isomorphism class for each $u$, as discussed in \cite{BML21}.
\begin{table}
\centering
    \begin{tabular}{|c|c|c|c|c|c|c|c|} \hline
        Order & Parameters  & Structure & Group\\
        \hline 
        $p^2 q^2$ & & $\mathbb{F}_p^2 \rtimes (C_q \times C_q)$ &  $P \rtimes \langle S, A\rangle$\\
        \hline
        $2 p^2 q^2 $ & & $\mathbb{F}_p^2 \rtimes(C_q \times C_{2q})$   & $\Hol(N)$\\
        \hline
        $p^2 q$ & $0 \leq u \leq q-1$ & $\mathbb{F}_p^2 \rtimes C_q$ & $P \rtimes\langle S A^u \rangle$\\
        \hline
        $2p^2 q$ & $0 \leq u \leq q-1$ & $\mathbb{F}_p^2 \rtimes C_{2q}$  & $P \rtimes \langle SA^u, B \rangle$\\
        \hline
        $pq^2$ & \multirow{2}{3em}{$\lambda \in \mathbb{F}_p$} & \multirow{2}{7em}{$C_q\times(C_p \rtimes C_q)$} & $\langle \e_1,S, [\lambda \e_2, A] \rangle$\\ 
        &  & &
       $\langle \e_2,  [\lambda \e_1,S], [\lambda \e_1, A] \rangle$\\
       \hline
        $2pq^2$ & \multirow{2}{3em}{$\lambda \in \mathbb{F}_p$} & \multirow{2}{7em}{$C_q \times (C_p \rtimes C_{2q})$ }& $\langle \e_1,S, [\lambda (1-k)\e_2, A], [2 \lambda \e_2, B] \rangle$\\   
        &  & & $\langle \e_2, [\lambda\e_1,S], [\lambda (1-k)\e_1, A], [2\lambda \e_1, B] \rangle$\\
        \hline
        $pq$ &  & $C_p\rtimes C_q$ & $\langle \e_1, [\lambda \e_2, SA^u] \rangle$\\
        &  & $C_{pq}$ & $\langle \e_1,
        [\lambda \e_2, SA^{-1}] \rangle$\\ 
        & $1 \leq u \leq q-2$, & $C_p \rtimes C_q$ & $\langle \e_1, S \rangle = N$\\ 
        & $\lambda \in \mathbb{F}_p$ & $C_p
        \rtimes C_q$ &  $\langle \e_2, [\lambda \e_1, SA^u]\rangle$\\ 
        & & $C_{pq}$ & $\langle \e_2,[\lambda \e_1, S] \rangle$\\ 
        & & $C_p \rtimes C_q$ &  $\langle \e_2, SA^{-1} \rangle$ \\ \hline
        $2pq$ &  &  $C_p\rtimes C_{2q}$ & $\langle \e_1, [\lambda (1-k^u)\e_2, SA^u],[2 \lambda\e_2, B] \rangle$ \\ 
        &  & $D_{2p} \times C_q$ & $\langle \e_1, [\lambda(1-k^{-1})\e_2, SA^{-1}], [2\lambda \e_2, B]\rangle$\\ 
        & $1 \leq u \leq q-2$, & $C_p \rtimes C_{2q}$ & $\langle \e_1, S,[\lambda \e_2, B] \rangle$\\ 
        & $\lambda \in \mathbb{F}_p$ &  $C_p \rtimes C_{2q}$  & $\langle \e_2, [\lambda (1-k^{u+1})\e_1, SA^u], [2\lambda \e_1, B] \rangle$\\           
        &  &  $D_{2p} \times C_q$ & $\langle \e_2, [\lambda(1-k)\e_1, S], [2\lambda \e_1,B] \rangle$\\                     
        &  &  $C_p \rtimes C_{2q}$ & $\langle \e_1, SA^{-1}, [\lambda \e_2, B] \rangle$\\ 
        \hline
    \end{tabular}
\vskip3mm
\caption{Transitive subgroups of $\Hol(C_p \rtimes C_q)$.} 
 \label{metab-trans-subgroups}  	
\end{table}
\begin{table}
\centering
\begin{tabular}{|c|c|c|c|c|} \hline
  Order & Structure & $\#$ groups & $|\Aut(M,M')|$ & $\#$ HGS \\ \hline
  $p^2 q^2$ & $N \rtimes (C_p \rtimes C_q)$ & $1$ & $2p(p-1)$ & $2$\\
  \hline
  $2p^2 q^2$ & $\Hol(N)$ & $1$ & $2p(p-1)$ & $2$ \\
  \hline
  $p^2 q$ & $C_p \times (C_p \rtimes C_q)$  & $2$ & $p(p-1)$ & $2p$ \\
  & $\mathbb{F}_p^2 \rtimes_u C_q$, $1 \leq u \leq \frac{1}{2}(q-3)$ & $2$ & $p^2(p-1)$ & $2p$ \\
   & $\mathbb{F}_p^2 \rtimes_{\frac{1}{2}(q-1)} C_q$ & $1$ & $2p^2(p-1)$ & $2p$ \\ 
   \hline
  $2p^2 q$ & $(C_p \times (C_p \rtimes C_q))\rtimes C_2$ & $2$ & $p^2(p-1)$ & $2p$ \\
   & $\mathbb{F}_p^2 \rtimes_u C_{2q}$, $1 \leq u \leq \frac{1}{2}(q-3)$ & $2$ & $p^2(p-1)$ & $2p$ \\
  & $\mathbb{F}_p^2 \rtimes_{\frac{1}{2}(q-1)} C_{2q}$ & $1$ & $2p^2(p-1)$ & $2p$ \\ \hline
  $pq^2$ & $C_q \times (C_p \rtimes C_q)$ & $2p$ & $(p-1)(q-1)$ & $2(q-1)$ \\ \hline
  $2pq^2$ & $C_q \times (C_p \rtimes C_{2q})$ & $2p$ & $(p-1)(q-1)$ & $2(q-1)$ \\ \hline
  $pq$ &  $C_p \rtimes C_q$ & $2p(q-2)+2$ & $p(p-1)$ & $2p(q-2)+2$ \\
   & $C_{pq}$ & $2p$ & $(p-1)(q-1)$ & $2(q-1)$ \\ \hline
  $2pq$ &  $C_p \rtimes C_{2q}$ & $2p(q-1)$ & $p-1$ & $2(q-1)$ \\
   &  $D_{2p} \times C_q$ & $2p$ & $(p-1)(q-1)$ & $2(q-1)$ \\
\hline
\end{tabular}
\vskip3mm
\caption{\#HGS of type $C_p \rtimes C_q$.} 
 \label{metab-trans-HGS}  	
\end{table}

\subsection{\texorpdfstring{$n=p_1p_2p_3$}{n=p1q2r3}}
There are $F(3)=3$ groups of order $n=p_1p_2p_3$ where $p_1=2p_2+1,p_2=2p_3+1$. Let $p_3-1=2^xs$ with $s$ odd. As mentioned in Example \ref{fib_3_example}, these are the cyclic group, $N_1:=C_n$ and the two metacyclic groups $N_2:=(C_{p_1} \rtimes C_{p_2}) \times C_{p_3}$ and $N_3:=C_{p_1} \times (C_{p_2} \rtimes C_{p_3})$.

We first look at the number of Hopf--Galois structures of type $N_1$. We have $l=3$, and so our choices for $I$ are $\emptyset,\{2\}$, and $\{3\}$. We have:
\begin{align*}
    &J_{\emptyset,\emptyset}=N_1,\\
    &J_{\{2\},t_2}=\left\langle \sigma_1, \left[\sigma_2,\alpha_1^{t_2}\right], \sigma_3\right\rangle \cong N_2,\\
    &J_{\{3\},t_3}=\left\langle \sigma_1, \sigma_2, \left[\sigma_3,\alpha_2^{t_3}\right] \right\rangle \cong N_3,
\end{align*}
where $1 \leq t_i \leq p_i-1$ for each $t_i$ above. Therefore, Proposition \ref{cyclic_trans} tells us that we have the following list of transitive subgroups of $\Hol(N_1)$:
\begin{align*}
    &N_1 \rtimes A,                  &&A \leq \Aut(N_1),\\
    &J_{\{2\},t_2} \rtimes B,    &&B \leq \langle \beta_1,\alpha_3,\beta_3 \rangle,\\
    &J_{\{3\},t_3} \rtimes C,    &&C \leq \langle \alpha_1,\beta_1,\beta_2 \rangle.
\end{align*}
Proposition \ref{cyclic_HGS_Cunningham} tells us that these correspond to extensions admitting either a unique Hopf--Galois structure or $p_2$ or $p_3$ Hopf--Galois structures, depending on the choices of $B$ and $C$. Theorem \ref{cyclic_cunningham_thm} then tells us that there are $(47x+22)\sigma_0(s)+10$ isomorphism types of permutation groups of degree $p_1p_2p_3$ which are realised by a Hopf--Galois structure of type $N_1$.

For the remainder of this example, we let $p:=p_1, q:=p_2$ and $r:=p_3$. 

We now treat the case for the Hopf--Galois structures of type $N_2$. We have \[N_2= (C_p \rtimes C_q) \times C_r.\]
For ease of notation, we let $P:=P_1=\langle \e_1,\e_2 \rangle$, $R:=R_2:=\langle S,A,B \rangle$, and $k:=k_2$. We further set $\Aut(C_r)=\langle \gamma, \delta \rangle$. Therefore
\[\Hol(N_2)=\langle \e_1,\e_2 \rangle \rtimes \langle S,A,B \rangle \times (\langle \sigma \rangle \rtimes \langle \gamma, \delta \rangle).\]
Table \ref{N2_trans_groups} lists the transitive subgroups of $\Hol(N_2)$, Table \ref{N2_isom_classes} gives the isomorphism types of each subgroup, and Table \ref{N2_HGS} gives the number of Hopf--Galois structures per isomorphism class.
\begin{table}
    \centering
    \scalebox{0.97}{
    \hspace*{-1.5cm}\begin{tabular}{|c|c|}
    \hline
    Parameters  &  Group\\
    \hline
    $a \mid s,b\in \{0,1\}$ & $(P \rtimes \langle S,A,B^b \rangle)\times (\langle \sigma \rangle \rtimes X)$\\
    $1 \leq c \leq x$ & $\langle \e_1,\e_2,S,A,(\gamma^{2^{x-c}},B), \sigma, \delta^{s/a} \rangle$\\
    \hline 
    $b \in \{0,1\}$, $0 \leq u \leq q-1$, & $(P \rtimes \langle S A^u, B^b \rangle) \times  (\langle \sigma \rangle \rtimes X)$\\
    $1 \leq c \leq x$ & $\langle \e_1,\e_2, SA^u, (\gamma^{2^{x-c}},B), \sigma, \delta^{s/a}, \rangle$\\
    \hline
    & $(\langle \e_1, S, [(1-k)\lambda\e_2, A], [(1-(-1)^b)\lambda\e_2, B^b] \rangle) \times  (\langle \sigma \rangle \rtimes X)$\\
    $0 \leq \lambda \leq p-1$, $1 \leq c \leq x$ & $\langle \e_1, S, [(1-k)\lambda\e_2, A], (\gamma^{2^{x-c}},[2\lambda\e_2, B]), \sigma, \delta^{s/a} \rangle$\\
    $a \mid s,b \in \{0,1\}$ & $\langle \e_2, [(1-k)\lambda\e_1,S], [(1-k)\lambda\e_1, A], [(1-(-1)^b)\lambda\e_1, B^b] \rangle \times  (\langle \sigma \rangle \rtimes X)$\\
    & $\langle \e_2, [(1-k)\lambda\e_1,S], [(1-k)\lambda\e_1, A], (\gamma^{2^{x-c}},[2\lambda\e_1, B]), \sigma, \delta^{s/a} \rangle$\\
    \hline
    & $(\langle \e_1, [(1-k^u)\lambda\e_2, SA^u], [(1-(-1)^b)\lambda\e_2, B^b] \rangle) \times  (\langle \sigma \rangle \rtimes X)$\\
    $0 \leq u \leq q-1$, $0 \leq \lambda \leq p-1$, & $\langle \e_1, [(1-k^u)\lambda\e_2, SA^u], (\gamma^{2^{x-c}},[2\lambda\e_2, B]), \sigma, \delta^{s/a} \rangle$\\
    $a \mid s, b \in \{0,1\}, 1 \leq c \leq x$ & $(\langle \e_2, [\lambda (1-k^{u+1})\e_1, SA^u], [(1-(-1)^b)\lambda\e_1, B^b] \rangle) \times  (\langle \sigma \rangle \rtimes X)$\\
    & $\langle \e_2, [(1-k^{u+1})\lambda\e_1, SA^u], (\gamma^{2^{x-c}},[2\lambda\e_1, B]), \sigma, \delta^{s/a}, \rangle$\\
\hline
\end{tabular}
}
    \vspace{5mm}
    \caption{Transitive subgroups of $\Hol(N_2)$}
    \label{N2_trans_groups}
\end{table}
\begin{remark}\label{table_data}
    To obtain Table \ref{N2_trans_groups} we apply Propositions \ref{p_i-groups} and \ref{2-groups} to the transitive subgroups of metacyclic type given in Table \ref{metab-trans-subgroups}, and the transitive subgroup of the holomorph of a group of prime order.

    To obtain Table \ref{N2_HGS}, we apply Proposition \ref{metab_cunningham_isoms} along with the data in Table \ref{metab-trans-HGS}.

    We let $X$ run through the subgroups of $\langle \gamma, \delta \rangle$.
\end{remark}
%\afterpage{
%\thispagestyle{empty}
We obtain the following theorem by counting the number of representatives in Table \ref{N2_isom_classes}.
\begin{theorem}\label{N2_tot}
    In total, there are $49+5r$ isomorphism types of permutation groups of degree $pqr$ which are realised by a Hopf--Galois structure of type $N_2$.
\end{theorem}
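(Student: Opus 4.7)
The plan is to enumerate the isomorphism classes of permutation groups realising type-$N_2$ Hopf--Galois structures by grouping the transitive subgroups of $\Hol(N_2)$ listed in Table \ref{N2_trans_groups} under the equivalence relations of Propositions \ref{direct_isoms} and \ref{indirect_isoms}, and then counting the resulting classes.

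First I would invoke Remark \ref{cunningham_metacyclic_observations} to exploit the decomposition
\[
\Hol(N_2) = \Hol(C_p \rtimes C_q) \times \Hol(C_r),
\]
so that any transitive subgroup $M \leq \Hol(N_2)$ projects to transitive subgroups $M_0 \leq \Hol(C_p \rtimes C_q)$ and $M_1 \leq \Hol(C_r)$, and $M$ itself is either a direct product $M_0 \times M_1$ or a \emph{glued} subgroup in which the two Sylow $2$-parts of the factors are intertwined via a generator of the form $(\gamma^{2^{x-c}}, B)$. These two families constitute the two lines of each row of Table \ref{N2_trans_groups}, which I would read off from Propositions \ref{2-groups} and \ref{p_i-groups} combined with the classification of transitive subgroups of $\Hol(C_p \rtimes C_q)$ in Table \ref{metab-trans-subgroups} and of $\Hol(C_r)$ from Section \ref{cyclic_cunningham}.

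Next, I would sort these transitive subgroups into iso classes. For the direct-product family, Proposition \ref{direct_isoms} tells us that the permutation-iso class of $M_0 \times M_1$ is determined by the iso classes of $M_0$ and $M_1$ separately; the iso classes of $M_0$ are enumerated in Table \ref{metab-trans-HGS}, and those of $M_1$ correspond to subgroups of the cyclic group $\Aut(C_r) \cong C_{r-1}$. For the glued family I would apply Proposition \ref{indirect_isoms} to recognise the iso types $\varphi_1,\varphi_2$ that identify superficially distinct presentations (such as swaps of basis vectors of $P = \langle \e_1, \e_2 \rangle$), and then check that the remaining free parameters (the exponent $c$, controlling the order of the glued $2$-element, and the divisor $a$ of $s$) genuinely index distinct iso classes. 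The assembled information forms Table \ref{N2_isom_classes}, and summing its rows yields the claimed $49 + 5r$.

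The main obstacle lies in the bookkeeping for the glued family: one must avoid double-counting between the direct-product line (with $b=1$, so that $B$ appears unglued) and the glued line, and correctly apply the Proposition~\ref{indirect_isoms} identifications to collapse pairs of presentations that differ only by swapping the roles of $\e_1$ and $\e_2$ in a metacyclic factor. One must also verify, as in the final paragraph of the proof of Proposition~\ref{indirect_isoms}, that no further coincidences occur between direct-product and glued subgroups, so that each row of Table \ref{N2_trans_groups} contributes its two lines independently. Once this bookkeeping is settled, the count $49+5r$ follows by a routine summation.
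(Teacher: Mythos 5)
Your proposal follows the paper's own method exactly: the paper builds Table \ref{N2_trans_groups} from Propositions \ref{2-groups} and \ref{p_i-groups} together with the known transitive subgroups of $\Hol(C_p\rtimes C_q)$ (Table \ref{metab-trans-subgroups}) and of $\Hol(C_r)$, collapses it into isomorphism classes (Table \ref{N2_isom_classes}) via Propositions \ref{direct_isoms} and \ref{indirect_isoms}, and the stated proof of the theorem is precisely a count of the representatives in that table. The only point worth noting is that for $N_2$ the gluing and identification steps are simpler than your outline anticipates, since $p\nmid r(r-1)$ means only the Sylow $2$-parts of the two factors can be intertwined, so Proposition \ref{p_i-groups} and the maps $\varphi_1,\varphi_2$ do not actually intervene in this case.
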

\begin{sidewaystable}
\setlength{\extrarowheight}{1.5mm}
	\vskip135mm
	\bigskip
	\centering
        \scalebox{1.25}{
    \begin{tabular}{|c|c|c|}
    \hline
    Parameters & Class Representative & Structure\\
    \hline
    $b \in \{0,1\}$  & $(P \rtimes \langle S,A,B^b\rangle) \times (\langle \sigma \rangle \rtimes X)$ & $((C_p \times (C_p \rtimes C_q)) \rtimes C_{bq}) \times (C_r \rtimes C_d)$\\
    \hline
    $a \mid s$, $1 \leq c \leq x$ & $\langle \e_1,\e_2, S, A, (\gamma^{2^{x-c}},B), \sigma, \delta^{s/a} \rangle$ & $(((C_p \times (C_p \rtimes C_q))\rtimes C_q) \times (C_r \rtimes C_a)) \rtimes C_{2^c}$\\
    \hline
    $b \in \{0,1\}$, $1 \leq u \leq \frac{1}{2}(q-3)$ & $(P \rtimes \langle S A^u, B^b \rangle) \times (\langle \sigma \rangle \rtimes X)$ & $(\mathbb{F}_p^2 \rtimes_u C_{bq}) \times (C_r \rtimes C_d)$\\
    \hline
    $b \in \{0,1\}$ & $(P \rtimes \langle S A^{-1}, B^b \rangle) \times (\langle \sigma \rangle \rtimes X)$ & $((C_p \times (C_p \rtimes C_q)) \rtimes C_b) \times (C_q \rtimes C_d)$\\
    \hline
    $b \in \{0,1\}$ & $(P \rtimes \langle S A^{\frac{1}{2}(q-1)}, B^b \rangle) \times (\langle \sigma \rangle \rtimes X)$ & $((\mathbb{F}_p^2 \rtimes_{\frac{1}{2}(q-1)} C_q) \rtimes C_b) \times (C_r \rtimes C_d)$\\
    \hline
    $a \mid s$, $1 \leq u \leq \frac{1}{2}(q-3)$, & $\langle \e_1,\e_2, SA^u, (\gamma^{2^{x-c}},B), \sigma, \delta^{s/a} \rangle$ & $((\mathbb{F}_p^2 \rtimes_u C_q) \times (C_r \rtimes C_a)) \rtimes C_{2^c}$\\
    $1 \leq c \leq x$ & &\\
    \hline
    $a \mid s$, $1 \leq c \leq x$ & $\langle \e_1,\e_2, SA^{-1}, (\gamma^{2^{x-c}},B), \sigma, \delta^{s/a} \rangle$ & $((C_p \times (C_p \rtimes C_q)) \times (C_r \rtimes C_a)) \rtimes C_{2^c}$\\
    \hline
    $a \mid s$, $1 \leq c \leq x$ & $\langle \e_1,\e_2, SA^{\frac{1}{2}(q-1)}, (\gamma^{2^{x-c}},B), \sigma, \delta^{s/a} \rangle$ & $((\mathbb{F}_p^2 \rtimes_{\frac{1}{2}(q-1)} C_q) \times (C_r \rtimes C_a)) \rtimes C_{2^c}$\\
    \hline
    $b \in \{0,1\}$ & $\langle \e_1,S,A,B^b \rangle \times (\langle \sigma \rangle \rtimes X)$ & $(C_q \times (C_p \rtimes C_{bq})) \times (C_r \rtimes C_d)$\\
    \hline
    $a \mid s$, $1 \leq c \leq x$ & $\langle \e_1,S,A,(\gamma^{2^{x-c}},B), \sigma,\delta^{s/a} \rangle$ & $((C_q \times (C_p \rtimes C_q)) \times (C_r \rtimes C_d)) \rtimes C_{2^x}$\\
    \hline
    $b \in \{0,1\}$, $1 \leq u \leq q-2$ & $\langle \e_1,SA^u,B^b \rangle \times (\langle \sigma \rangle \rtimes X) $ & $(C_p \rtimes C_{bq}) \times (C_r \rtimes C_d)$ \\
    \hline
    $b \in \{0,1\}$ & $\langle \e_1,SA^{-1},B^b \rangle \times (\langle \sigma \rangle \rtimes X)$ & $((C_p \rtimes C_b) \times C_q) \times (C_r \rtimes C_d)$\\
    \hline
    $a \mid s$, $1 \leq u \leq q-2$, & $\langle \e_1,SA^u,(\gamma^{2^{x-c}},B),\sigma,\delta^{s/a} \rangle$ & $((C_p \rtimes C_q) \times (C_r \rtimes C_a)) \rtimes C_{2^x}$\\
    $1 \leq c \leq x$ & &\\
    \hline
    $a \mid s$, $1 \leq c \leq x$ & $\langle \e_1,SA^{-1},(\gamma^{2^{x-c}},B), \sigma,\delta^{s/a} \rangle$ & $((C_p \times C_q) \times (C_r \rtimes C_a)) \rtimes C_{2^x}$\\
    \hline
    \end{tabular}}
    \vskip5mm
    \caption{Isomorphism classes for the transitive subgroups of $\Hol(N_2)$}
    \label{N2_isom_classes}
\end{sidewaystable}
%}
\begin{sidewaystable}
    \setlength{\extrarowheight}{1.5mm}
    \vskip135mm
    \bigskip
    \centering
    \scalebox{1.25}{
        \begin{tabular}{|c|c|c|c|}
            \hline
            Structure & \# groups & $|\Aut(M,M')|$ & \# HGS\\
            \hline
            $((C_p \times (C_p \rtimes C_q)) \rtimes C_{bq}) \times (C_r \rtimes C_d)$ & $1$ & $2p(p-1)(r-1)$ & $2$\\
            \hline
            $(((C_p \times (C_p \rtimes C_q))\rtimes C_q) \times (C_r \rtimes C_a)) \rtimes C_{2^c}$ & $1$ & $2p(p-1)(r-1)$ & $2$\\
            \hline
            $(\mathbb{F}_p^2 \rtimes_u C_{bq}) \times (C_r \rtimes C_d)$ & $2$ & $p^{3-b}(p-1)(r-1)$ & $2p^{2-b}$\\
            \hline
            $((C_p \times (C_p \rtimes C_q)) \rtimes C_b) \times (C_q \rtimes C_d)$ & $2$ & $p(p-1)(r-1)$ & $2$\\
            \hline
            $((\mathbb{F}_p^2 \rtimes_{\frac{1}{2}(q-1)} C_q) \rtimes C_b) \times (C_r \rtimes C_d)$ & $1$ & $2p^2(p-1)(r-1)$ & $2p$\\
            \hline
            $((\mathbb{F}_p^2 \rtimes_u C_q) \times (C_r \rtimes C_a)) \rtimes C_{2^c}$ & $2$ & $p(p-1)(r-1)$ & $2$\\
            \hline
            $((C_p \times (C_p \rtimes C_q)) \times (C_r \rtimes C_a)) \rtimes C_{2^c}$ & $2$ & $p(p-1)(r-1)$ & $2$\\
            \hline
            $((\mathbb{F}_p^2 \rtimes_{\frac{1}{2}(q-1)} C_q) \times (C_r \rtimes C_a)) \rtimes C_{2^c}$ & $1$ & $2p^2(p-1)(r-1)$ & $2p$\\
            \hline
            $(C_q \times (C_p \rtimes C_{bq})) \times (C_r \rtimes C_d)$ & $2p$ & $(p-1)(q-1)(r-1)$ & $2(q-1)$\\
            \hline
            $((C_q \times (C_p \rtimes C_q)) \times (C_r \rtimes C_d)) \rtimes C_{2^x}$ & $2p$ & $(p-1)(q-1)(r-1)$ & $2(q-1)$\\
            \hline
            $(C_p \rtimes C_{2q}) \times (C_r \rtimes C_d)$ & $2p(q--1)$ & $(p-1)(r-1)$ & $2(q-1)$\\
            $(C_p \rtimes C_q) \times (C_r \rtimes C_d)$ & $2p(q-2)+2$ & $p(p-1)(r-1)$ & $2p(q-2)+2$\\
            \hline
            $((C_p \rtimes C_b) \times C_q) \times (C_r \rtimes C_d)$ & $2p$ & $(p-1)(q-1)(r-1)$ & $2(q-1)$\\
            \hline
            $((C_p \rtimes C_q) \times (C_r \rtimes C_a)) \rtimes C_{2^x}$ & $2p(q-1)$ & $(p-1)(r-1)$ & $2(q-1)$\\
            \hline
            $((C_p \times C_q) \times (C_r \rtimes C_a)) \rtimes C_{2^x}$ & $2p$ & $(p-1)(q-1)(r-1)$ & $2(q-1)$\\
            \hline
        \end{tabular}
        }
        \vspace{5mm}
        \caption{Hopf--Galois structures of type $N_2$}
        \label{N2_HGS}
\end{sidewaystable}
\newpage
Finally, we look at the Hopf--Galois structures of type $N_3$. We have
\[N_3= C_p\times (C_q \rtimes C_r).\]
Similar to the case for $N_2$, for ease of notation, we let $P:=P_2=\langle \e_1,\e_2 \rangle$ and $R:=R_3:=\langle S,A,B \rangle$, and $k:=k_3$. We further set $\Aut(C_p)=\langle \alpha,\beta \rangle$. Therefore
\[\Hol(N_3)=(\langle \sigma \rangle \rtimes \langle \alpha,\beta \rangle) \times (\langle \e_1,\e_2 \rangle \rtimes \langle S,A,B \rangle).\]
Table \ref{N3_trans_groups} lists the transitive subgroups of $\Hol(N_3)$, Table \ref{N3_isom_classes} gives the isomorphism types of each subgroup, and Table \ref{N3_HGS} gives the number of Hopf--Galois structures per isomorphism class. The data in these tables are obtained in a similar way to Tables \ref{N2_trans_groups}, \ref{N2_isom_classes} and \ref{N2_HGS}. We let $X$ run through the subgroups of $\langle \alpha,\beta \rangle$. 
\begin{table}
    \centering
    \scalebox{0.97}{
    \hspace*{-1.5cm}\begin{tabular}{|c|c|}
    \hline
    Parameters  &  Group\\
    \hline
    $a,b\in \{0,1\}$ & $(\langle \sigma \rangle \rtimes X) \times (P \rtimes \langle S,A,B^b \rangle)$\\
    & $\langle \sigma, \alpha^a, \e_1,\e_2,S,A,(\beta,B) \rangle$\\
    \hline 
    $0 \leq u \leq r-1$, & $(\langle \sigma \rangle \rtimes X) \times (P \rtimes \langle S A^u, B^b \rangle$)\\
    $b \in \{0,1\}$ & $\langle \sigma, \alpha^a, \e_1,\e_2, SA^u, (\beta,B) \rangle$\\
    \hline
    & $(\langle \sigma \rangle \rtimes X) \times (\langle \e_1, S, [(1-k)\lambda\e_2, A], [(1-(-1)^b)\lambda\e_2, B^b] \rangle)$\\
    $0 \leq \lambda \leq q-1$, & $\langle \sigma, \alpha^a, \e_1, S, [(1-k)\lambda\e_2, A], (\beta,[2\lambda\e_2, B]) \rangle$\\
    $a,b \in \{0,1\}$ & $(\langle \sigma \rangle \rtimes X) \times (\langle \e_2, [(1-k)\lambda\e_1,S], [(1-k)\lambda\e_1, A], [(1-(-1)^b)\lambda\e_1, B^b] \rangle$\\
    & $\langle \sigma, \alpha^a, \e_2, [(1-k)\lambda\e_1,S], [(1-k)\lambda\e_1, A], (\beta,[2\lambda\e_1, B]) \rangle$\\
    \hline
    & $\langle \sigma, \beta^b, (\alpha,\mu\e_1),[\lambda \e_2, SA^{-1}] \rangle$\\
    $1 \leq \mu \leq q-1$, $0 \leq \lambda \leq q-1$,& $\langle \sigma, \beta^b, (\alpha,\mu\e_1), \e_2, SA^{-1} \rangle$\\
    $b\in \{0,1\}$ & $\langle \sigma, \beta^b, (\alpha,\mu\e_2),[\lambda \e_1, S] \rangle$\\
    & $\langle \sigma, \beta^b, (\alpha,\mu\e_2), \e_1, S\rangle$\\
    \hline
    & $(\langle \sigma \rangle \rtimes X) \times (\langle \e_1, [(1-k^u)\lambda\e_2, SA^u], [(1-(-1)^b)\lambda\e_2, B^b] \rangle)$\\
    $0 \leq u \leq r-1$, $0 \leq \lambda \leq q-1$, & $\langle \sigma, \alpha^a, \e_1, [(1-k^u)\lambda\e_2, SA^u], (\beta,[2\lambda\e_2, B]) \rangle$\\
    $a,b\in \{0,1\}$ & $(\langle \sigma \rangle \rtimes X) \times (\langle \e_2, [\lambda (1-k^{u+1})\e_1, SA^u], [(1-(-1)^b)\lambda\e_1, B^b] \rangle)$\\
    & $\langle \sigma, \alpha^a, \e_2, [(1-k^{u+1})\lambda\e_1, SA^u], (\beta,[2\lambda\e_1, B]) \rangle$\\
\hline
\end{tabular}
}
    \vspace{5mm}
    \caption{Transitive subgroups of $\Hol(N_3)$}
    \label{N3_trans_groups}
\end{table}
\begin{sidewaystable}
\setlength{\extrarowheight}{1.5mm}
	\vskip135mm
	\bigskip
	\centering
	\scalebox{1.25}{
    \begin{tabular}{|c|c|c|}
    \hline
    Parameters & Class Representative & Structure\\
    \hline
    $b \in \{0,1\}$  & $(\langle \sigma \rangle \rtimes X) \times (P \rtimes \langle S,A,B^b\rangle)$ & $(C_p \rtimes C_d) \times ((C_q \times (C_q \rtimes C_r)) \rtimes C_{br})$\\
    \hline
    $a \in \{0,1\}$ &$\langle \sigma, \alpha^a, \e_1,\e_2, S, A, (\beta,B) \rangle$ & $((C_p \rtimes C_{q^a}) \times ((C_q \times (C_q \rtimes C_r)) \rtimes C_r)) \rtimes C_2$\\
    \hline
    $b \in \{0,1\}$, $1 \leq u \leq \frac{1}{2}(r-3)$ &$(\langle \sigma \rangle \rtimes X) \times (P \rtimes \langle S A^u, B^b \rangle)$ & $(C_p \rtimes C_d) \times (\mathbb{F}_q^2 \rtimes_u C_{br})$\\
    \hline
    $b \in \{0,1\}$ & $(\langle \sigma \rangle \rtimes X) \times (P \rtimes \langle S A^{-1}, B^b \rangle)$ & $(C_p \rtimes C_d) \times (C_q \times (C_q \times C_r)) \rtimes C_b$\\
    \hline
    $b \in \{0,1\}$&$(\langle \sigma \rangle \rtimes X) \times (P \rtimes \langle S A^{\frac{1}{2}(r-1)}, B^b \rangle)$ & $(C_p \rtimes C_d) \times (\mathbb{F}_q^2 \rtimes_{\frac{1}{2}(r-1)} C_{br})$\\
    \hline
    $a \in \{0,1\}$, $1 \leq u \leq \frac{1}{2}(r-3)$ & $\langle \sigma, \alpha^a, \e_1,\e_2, SA^u, (\beta,B) \rangle$ & $((C_p \rtimes C_{q^a}) \times (\mathbb{F}_q^2 \rtimes_u C_r)) \rtimes C_2$\\
    \hline
    $a \in \{0,1\}$ & $\langle \sigma, \alpha^a, \e_1,\e_2, SA^{-1}, (\beta,B) \rangle$ & $((C_p \rtimes C_{q^a}) \times (C_q \times (C_q \times C_r))) \rtimes C_2$\\
    \hline
    $a \in \{0,1\}$ & $\langle \sigma, \alpha^a, \e_1,\e_2, SA^{\frac{1}{2}(r-1)}, (\beta,B) \rangle$ & $((C_p \rtimes C_{q^a}) \times (\mathbb{F}_q^2 \rtimes_{\frac{1}{2}(r-1)} C_r)) \rtimes C_2$\\
    \hline
    $b \in \{0,1\}$ & $(\langle \sigma \rangle \rtimes X) \times \langle \e_1,S,A,B^b \rangle$ & $(C_p \rtimes C_d) \times (C_q \rtimes (C_r \times C_{br}))$\\
    \hline
    $a \in \{0,1\}$ & $\langle \sigma,\alpha^a,\e_1,S,A,(\beta,B) \rangle$ & $((C_p \rtimes C_{q^a})\times (C_q \rtimes (C_r \times C_r))) \rtimes C_2$\\
    \hline
    $b \in \{0,1\}$ & $\langle \sigma,\beta^b,(\alpha,\e_1),SA^{-1} \rangle$ & $((C_p \rtimes C_b) \rtimes C_q) \times C_r$\\
    \hline
    $b \in \{0,1\}$ & $\langle \sigma, \beta^b, (\alpha,\e_1), \e_2, SA^{-1} \rangle$ & $((C_p \rtimes C_b) \rtimes C_q) \times (C_q \rtimes C_r)$\\
    \hline
    $b \in \{0,1\}$, $1 \leq u \leq r-2$ & $(\langle \sigma \rangle \rtimes X) \times \langle \e_1,SA^u,B^b \rangle$ & $(C_p \rtimes C_d) \times (C_q \rtimes C_{br})$\\
    \hline
    $b \in \{0,1\}$ & $(\langle \sigma \rangle \rtimes X) \times \langle \e_1,SA^{-1},B^b \rangle$ & $(C_p \rtimes C_d) \times (C_q \times C_{br})$\\
    \hline
    $a \in \{0,1\}$, $1 \leq u \leq r-2$ & $\langle \sigma,\alpha^a,\e_1,SA^u,(\beta,B) \rangle$ & $((C_p \rtimes C_{q^a}) \times (C_q \rtimes C_r)) \rtimes C_2$\\
    \hline
    $a \in \{0,1\}$ & $\langle \sigma,\alpha^a,\e_1,SA^{-1},(\beta,B) \rangle$ & $((C_p \rtimes C_{q^a}) \times (C_q \times C_r)) \rtimes C_2$\\
    \hline
    \end{tabular}}
    \vskip5mm
    \caption{Isomorphism classes for the transitive subgroups of $\Hol(N_3)$}
    \label{N3_isom_classes}
\end{sidewaystable}
%}
\begin{sidewaystable}
    \setlength{\extrarowheight}{1.5mm}
    \vskip135mm
    \bigskip
    \centering
    \scalebox{1.25}{
        \begin{tabular}{|c|c|c|c|}
            \hline
            Structure & \# groups & $|\Aut(M,M')|$ & \# HGS\\
            \hline
            $(C_p \rtimes C_d) \times ((C_q \times (C_q \rtimes C_r)) \rtimes C_{br})$ & $1$ & $2q(p-1)(q-1)$ & $2$\\
            \hline
            $((C_p \rtimes C_{q^a}) \times ((C_q \times (C_q \rtimes C_r)) \rtimes C_r)) \rtimes C_2$ & $1$ & $2q(p-1)(q-1)$ & $2$\\
            \hline
            $(C_p \rtimes C_d) \times (\mathbb{F}_q^2 \rtimes_u C_{br})$ & $2$ & $q^{2-b}(p-1)(q-1)$ & $2q^{1-b}$\\
            \hline
            $(C_p \rtimes C_d) \times (C_q \times (C_q \times C_r)) \rtimes C_b$ & $2$ & $q(p-1)(q-1)$ & $2$\\
            \hline
            $(C_p \rtimes C_d) \times (\mathbb{F}_q^{2-b} \rtimes_{\frac{1}{2}(r-1)} C_{br})$ & $1$ & $2q^{2-b}(p-1)(q-1)$ & $2q^{1-b}$\\
            \hline
            $((C_p \rtimes C_{q^a}) \times (\mathbb{F}_q^2 \rtimes_u C_r)) \rtimes C_2$ & $2$ & $q^{2-b}(p-1)(q-1)$ & $2q^{1-b}$\\
            \hline
            $((C_p \rtimes C_{q^a}) \times (C_q \times (C_q \times C_r))) \rtimes C_2$ & $2$ & $q^{2-b}(p-1)(q-1)$ & $2q^{1-b}$\\
            \hline
            $((C_p \rtimes C_{q^a}) \times (\mathbb{F}_q^2 \rtimes_{\frac{1}{2}(r-1)} C_r))\rtimes C_2$ & $1$ & $2q^{2-b}(p-1)(q-1)$ & $2q^{1-b}$\\
            \hline
            $(C_p \rtimes C_d) \times (C_q \rtimes (C_r \times C_{br}))$ & $2q$ & $(p-1)(q-1)(r-1)$ & $2(r-1)$\\
            \hline
            $((C_p \rtimes C_{q^a})\times (C_q \rtimes (C_r \times C_r))) \rtimes C_2$ & $2q$ & $(p-1)(q-1)(r-1)$ & $2(r-1)$\\
            \hline
            $((C_p \rtimes C_b) \rtimes C_q) \times C_r$ & $2q(q-1)$ & $(p-1)(q-1)$ & $2(q-1)$\\
            \hline
            $((C_p \rtimes C_b) \rtimes C_q) \times (C_q \rtimes C_r)$ & $2(q-1)$ & $q(p-1)(q-1)$ & $2(q-1)$\\
            \hline
            $(C_p \rtimes C_d) \times (C_q \rtimes C_r)$ & $2q(r-2)+2$ & $q(p-1)(q-1)$ & $2q(r-2)+2$\\
            \hline
            $(C_p \rtimes C_d) \times (C_q \rtimes C_{2r})$ & $2q(r-1)$ & $(p-1)(q-1)$ & $2(r-1)$\\
            \hline
            $(C_p \rtimes C_d) \times (C_q \times C_{br})$ & $2q$ & $(p-1)(q-1)(r-1)$ & $2(r-1)$\\
            \hline
            $((C_p \rtimes C_{q^a}) \times (C_q \rtimes C_r)) \rtimes C_2$ & $2q(r-1)$ & $(p-1)(q-1)$ & $2(r-1)$\\
            \hline
            $((C_p \rtimes C_{q^a}) \times (C_q \times C_r)) \rtimes C_2$ & $2q$ & $(p-1)(q-1)(r-1)$ & $2(r-1)$\\
            \hline
        \end{tabular}
        }
        \vspace{5mm}
        \caption{Hopf--Galois structures of type $N_3$}
        \label{N3_HGS}
\end{sidewaystable}
We obtain the following theorem by counting the number of representatives in Table \ref{N3_isom_classes}.
\begin{theorem}\label{N3_tot}
    There are $\sigma_0(s)\left[\frac{1}{2}x(27+3q)+q+9\right]$ isomorphism types of permutation groups of degree $pqr$ which are realised by a Hopf--Galois structure of type $N_3$.
\end{theorem}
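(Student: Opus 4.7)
The plan is to derive the count by direct enumeration of the isomorphism classes listed in Table \ref{N3_isom_classes}. By Propositions \ref{direct_isoms} and \ref{indirect_isoms}, the entries of that table, expanded over their parameter ranges, form a complete and non-redundant list of the isomorphism classes of transitive permutation groups of degree $pqr$ arising from Hopf--Galois structures of type $N_3$, so the theorem reduces to a finite tally.

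First I would go through each of the sixteen rows of Table \ref{N3_isom_classes} and identify every parameter governing the row, together with its range. The explicit parameters are the binary indices $a, b \in \{0,1\}$ and the index $u$ whose range is $\{1, \ldots, (r-3)/2\}$, $\{1, \ldots, r-2\}$, or a singleton such as $\{(r-1)/2\}$ depending on which transitive subgroup of $\Hol(C_q \rtimes C_r)$ is involved (cf.\ Table \ref{metab-trans-subgroups}). The implicit parameters are the cyclic subgroup $X \leq \Aut(C_p) = \langle \alpha, \beta \rangle = C_{2q}$, encoded via the symbol $C_d$ in the structure column, together with any further divisor choices responsible for the factors $\sigma_0(s)$ and $x$ in the final formula.

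Next I would multiply the number of choices for the parameters in each row and sum the contributions across rows. The resulting total splits into four natural pieces: a constant part coming from rows with only binary parameters; a part linear in $q$ from rows in which $u$ ranges over the full non-trivial interval (converting $r$-linear contributions to $q$-linear ones via $q = 2r+1$); a part scaling with $x$ from subgroup choices in a cyclic $2$-group; and a part scaling with $\sigma_0(s)$ from divisor choices in a cyclic $s$-group. Collecting and factoring these pieces, the total should simplify to the claimed closed form $\sigma_0(s)\bigl[\tfrac{1}{2}x(27 + 3q) + q + 9\bigr]$.

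The main obstacle, as in the analogous Theorem \ref{N2_tot}, is purely the bookkeeping: sixteen rows with distinct parameter ranges must be enumerated correctly and the resulting unwieldy sum simplified to a compact closed form. I would handle this by tabulating each row's contribution in an auxiliary spreadsheet, decomposed into its $\sigma_0(s)$, $x$, $q$, and constant parts, and then summing columnwise; once this decomposition is in hand the simplification becomes essentially mechanical.
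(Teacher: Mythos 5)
Your overall strategy coincides with the paper's: the paper's entire justification for this theorem is the single sentence preceding it, asserting that the count is obtained by tallying the representatives in Table \ref{N3_isom_classes}, and that tally (rows times parameter ranges, then simplify) is exactly what you propose. In that sense the approach is the same; your write-up is in fact more explicit than the paper's about how the tally should be organised.

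However, the step in which you assign the factors $\sigma_0(s)$ and $x$ to ``divisor choices in a cyclic $s$-group'' and ``subgroup choices in a cyclic $2$-group'' cannot be carried out, because no such choices occur for $N_3$. Here $N_3=C_p\times(C_q\rtimes C_r)$, so $\Hol(N_3)=\Hol(C_p)\times\Hol(C_q\rtimes C_r)$, where $\Aut(C_p)=\langle\alpha,\beta\rangle\cong C_{2q}$ has exactly four subgroups and $\Aut(C_q\rtimes C_r)=\langle\phi,\psi,\theta\rangle$ has order $2qr$. The quantities $x$ and $s$ are defined by $r-1=2^xs$ and parametrise the subgroups of $\Aut(C_r)\cong C_{2^x}\times C_s$; but $\Aut(C_r)$ is a factor of $\Hol(N_2)$, not of $\Hol(N_3)$, which is why the parameters ``$a\mid s$, $1\leq c\leq x$'' pervade Tables \ref{N2_trans_groups}--\ref{N2_HGS} and appear nowhere in Tables \ref{N3_trans_groups}--\ref{N3_HGS}. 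Consequently the enumeration you describe yields a quantity of the form $\alpha+\beta r$ for integers $\alpha,\beta$ (each row of Table \ref{N3_isom_classes} contributes a bounded multiple of $1$, $r-2$ or $\tfrac{1}{2}(r-3)$, times the number of admissible $X\leq C_{2q}$), with no occurrence of $\sigma_0(s)$ or $x$; it cannot ``simplify to the claimed closed form.'' Carrying out your own plan faithfully would expose this mismatch: the stated expression has precisely the shape one expects for the type-$N_2$ count (compare Theorem \ref{N2_tot}, whose stated value $49+5r$ conversely has the shape expected here), so either a genuine source of the $\sigma_0(s)$ and $x$ factors inside $\Hol(N_3)$ must be exhibited --- and there is none --- or the formula being proved must be re-examined. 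A complete proof also needs the explicit arithmetic you defer to a spreadsheet, since the theorem is nothing but that arithmetic.
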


\newpage
\subsection{\texorpdfstring{$n=p_1p_2p_3p_4$}{n=p1p2p3p4}}
There are $F(4)=5$ groups of order $n=p_1p_2p_3p_4$ where $p_1=2p_2+1,p_2=2p_3+1,p_3=2p_4+1$. These are the cyclic group, $N_1:=C_n$ and the four metacyclic groups $N_2:=(C_{p_1}\rtimes C_{p_2}) \times C_{p_3p_4}$, $N_3:=(C_{p_2} \rtimes C_{p_3}) \times C_{p_1p_4}, N_4:=(C_{p_3} \rtimes C_{p_4}) \times C_{p_1p_2}, N_5:=(C_{p_1} \rtimes C_{p_2}) \times (C_{p_3} \rtimes C_{p_4})$.

We have $l=4$, and so our choices for $I$ are $\emptyset, \{2\}, \{3\}, \{4\}$, and $\{2,4\}$.

We look at the number of Hopf--Galois structures of type $N_1$. We have:
\begin{align*}
    &J_{\emptyset,\emptyset}=N_1,\\
    &J_{\{2\},t_2}=\left\langle \sigma_1, \left[\sigma_2,\alpha_1^{t_2}\right], \sigma_3,\sigma_4\right\rangle \cong N_2,\\
    &J_{\{3\},t_3}=\left\langle \sigma_1, \sigma_2, \left[\sigma_3,\alpha_2^{t_3}\right], \sigma_4 \right\rangle \cong N_3,\\
    &J_{\{4\},t_4}=\left\langle \sigma_1, \sigma_2, \sigma_3, \left[\sigma_4,\alpha_3^{t_4}\right] \right\rangle \cong N_4,\\
    &J_{\{2,4\},(t_2,t_4)}=\left\langle \sigma_1, \left[\sigma_2,\alpha_1^{t_2}\right], \sigma_3, \left[\sigma_4,\alpha_3^{t_4}\right] \right\rangle \cong N_5,
\end{align*}
where $1 \leq t_i \leq p_i-1$ for each $t_i$ above. Therefore, Proposition \ref{cyclic_trans} tells us that we have the following list of transitive subgroups of $\Hol(N_1)$:
\begin{align*}
    &N_1 \rtimes A               && A \leq \Aut(N_1),\\
    &J_{\{2\},t_2} \rtimes B   && B \leq \langle \beta_1,\alpha_3,\beta_3, \alpha_4,\beta_4 \rangle,\\
    &J_{\{3\},t_3} \rtimes C   && C \leq \langle \alpha_1,\beta_1,\beta_2, \alpha_4,\beta_4 \rangle,\\
    &J_{\{4\},t_4} \rtimes D   && D \leq \langle \alpha_1,\beta_1, \alpha_2, \beta_2, \beta_3, \rangle,\\
    &J_{\{2,4\},(t_2,t_4)} \rtimes E && E \leq \langle \beta_1,\beta_3 \rangle.
\end{align*}
Proposition \ref{cyclic_HGS_Cunningham} tells us that these correspond to extensions admitting either a unique Hopf--Galois structure, $p_2$, $p_3$, $p_4$, or $p_2p_4$ Hopf--Galois structures, depending on the choices of $B,C,D$ and $E$. Theorem \ref{cyclic_cunningham_thm} then tells us that there are
\[(452x+148)\sigma_0(s)+69\]
isomorphism types of permutation groups of degree $p_1p_2p_3p_4$ which are realised by a Hopf--Galois structure of type $N_1$. 

\section*{Acknowledgements}
This paper was completed under the support of the following two grants:

The Engineering and Physical Sciences Doctoral Training Partnership research grant EP/T518049/1 (EPSRC DTP).

Project OZR3762 of Vrije Universiteit
Brussel and FWO Senior Research Project G004124N.

\bibliography{MyBib}

\end{document}